\newcommand{\tensor}{\otimes}
\newcommand{\R}{\mathbb{R}}
\newcommand{\C}{\mathbb{C}}
\newcommand{\Z}{\mathbb{Z}}
\newcommand{\N}{\mathbb{N}}
\newcommand{\Q}{\mathbb{Q}}
\newcommand{\M}{\mathcal{M}}
\newcommand{\SO}{\mathrm{SO}}
\newcommand{\norm}[1]{\lVert #1 \rVert}
\newcommand{\Ad}{\mathrm{Ad}}
\numberwithin{equation}{section}
\theoremstyle{plain} 
\newtheorem{thm}{Theorem}[section]
\newtheorem{lem}[thm]{Lemma}
\newtheorem{prop}[thm]{Proposition}
\newtheorem{claim}{Claim}[section]
\theoremstyle{definition}
\newtheorem{defn}{Definition}[section]
\theoremstyle{remark}
\newtheorem{rem}[equation]{Remark}
\newtheorem{notation}[equation]{Notation}
\begin{document}

\title[Equidistribution of expanding translates of smooth curves ]{Equidistribution of expanding translates of smooth curves in homogeneous spaces under the action of a product of $\mathrm{SO}(n,1)$'s }

\author{Yubin Shin}

\address{}






 \begin{abstract} 
We study the limiting distributions of expanding translates of a compact segment of a smooth curve under a diagonal subgroup of $G=\mathrm{SO}(n_1,1)\times\cdots\times\mathrm{SO}(n_k,1)$, where $G$ acts on a finite volume homogeneous space $L/\Gamma$ as a subgroup. We show that the expanding translates of the curve become equidistributed in the orbit closure of $G$, provided that Lebesgue almost every point on the curve avoids a certain countable collection of algebraic obstructions. The proof involves Ratner's measure classification theorem, Kempf's geometric invariant theory, and the linearization technique.
 \end{abstract}
\maketitle


 \section{Introduction}
 \label{sec:intro}
\subsection{Background}

The equidistribution problem, initiated by Shah~\cite{Sha09SLnR}, concerns the limiting distribution of parameter measures on a curve segment that expands within a
homogeneous space under the action of a diagonal one-parameter subgroup. 
The central question in this problem is to determine the precise conditions on the curve that ensure the translated
measures do not lose mass to infinity or become equidistributed in the homogeneous
space. 
More precisely, the general setting of the problem can be described as follows. 
Let $G$ be a semisimple Lie group, $\Gamma$ be a lattice in $G$, and $x \in G/\Gamma$. 
Let $A = \{a(t): t \in \R\}$ be an $\R$-diagonalizable one-parameter subgroup of $G$, and 
\begin{equation} \label{eq:horo}
U^+_G(A)=\{u\in G: a(-t)ua(t)\to e \text{ as } t\to\infty\}
\end{equation}
denote the corresponding expanding horospherical subgroup of $G$. 
The problem asks: For any curve $\varphi: [0,1] \to U^+(A)$, under what conditions on $\varphi$,  do the parametric measures concentrated on $\{a(t)\varphi([0,1])x\}$ become equidistributed as $t \to \infty$ with respect to the unique $G$-invariant measure $\mu_G$ on $G/\Gamma$.  

In the specific case of $G = \mathrm{SO}(n,1)$, establishing these conditions for curves yields a finer result for the equidistribution of $(n-1)$-dimensional objects in $n$-dimensional hyperbolic spaces. 
Specifically, let $M$ be a hyperbolic $n$-manifold of finite Riemannian volume. 
There exists a lattice $\Gamma$ in $\mathrm{SO}(n,1)$ such that $M \cong \mathbb{H}^n/\Gamma$, where $\mathbb{H}^n \cong \mathrm{SO}(n) \backslash \mathrm{SO}(n,1)$. 
Let $\pi: \mathbb{H}^n \to M$ be the quotient map. In the open unit ball model of $\mathbb{H}^n$, for $0 < \alpha< 1$, we can embed the sphere $\alpha \mathbb{S}^{n-1}$ within a unit ball. As $\alpha \to 1^-$, this sphere $\alpha \mathbb{S}^{n-1}$ approaches the boundary $\partial \mathbb{H}^n = \mathbb{S}^{n-1}$. 
For the rotation-invariant probability measure $\mu_{\alpha}$ concentrated on $\pi(\alpha \mathbb{S}^{n-1})$, we have: 
$$\lim_{\alpha \to 1^-} \mu_{\alpha} = \mu_M$$ where $\mu_M$ is the normalized Riemannian volume measure on $M$. In other words, the measure $\mu_\alpha$ becomes equidistributed as $\alpha \to 1^-$. This is a special case of the results shown in \cite{DWR93} and \cite{EM93}. See also \cite{Randol84} for $n = 3$ case. 

Shah \cite{Sha09SOn1analytic} proved that for any analytic curve $\psi: [0,1] \to \mathbb{S}^{n-1}$ such that its image is not contained in any proper subsphere, the parametric measures concentrated on $\pi(\alpha\psi)$ equidistribute to $\mu_M$ as $\alpha\to 1^-$, as compared to the measures concentrated on entire spheres $\pi(\alpha \mathbb{S}^{n-1})$. In the language of homogeneous dynamics, the condition on $\psi$ is formulated as follows: Let $A = \{a(t): t \in \R\}$ be a non-trivial diagonalizable one-parameter subgroup of $G=\mathrm{SO}(n,1)$. Let $P^- = \{g \in G : \lim_{t \to \infty} a(t) g a(t)^{-1} \text{ exsits in } G\}$ be the corresponding proper parabolic subgroup. The quotient space $P^-\backslash G$ can be identified with $\mathrm{SO}(n-1)\backslash \mathrm{SO}(n) \cong \mathbb{S}^{n-1}$. Suppose $\varphi:[0,1]\to U^+(A)$ be such that the projection of $\varphi(t)$ on $\mathbb{S}^{n-1}$ equals $\psi(t)$ for almost all $t$. So, the projection of $\varphi([0,1])$ on the quotient space $P^-\backslash G$ is not contained in any proper subsphere of $\mathbb{S}^{n-1}$, then the expanding translates of $\varphi([0,1])\Gamma/\Gamma$ by $\{a(t)\}_t$ become equidistributed as $t\to \infty$. Shah \cite{Sha09SOn1smooth} later generalized this result to the case where $\psi$ is a smooth function. In this setting, reflecting the differences between analytic and smooth functions, the condition required is that the projection of $\varphi$ to $P^-\backslash G$ must not map any set of positive measure into a specific countable collection of proper subspheres. 

Lei Yang \cite{LYangProduct} extended the result on analytic curves to the setting of actions of $G = \big(\mathrm{SO}(n,1)\big)^k$ on finite volume homogeneous spaces $L/\Gamma$, where $G\subset L$, and described the sufficient algebraic conditions on the analytic curves for equidistribution. 

Meanwhile, inspired by the work of Aka et al.\cite{Aka18}, P. Yang \cite{PYang20thesis} resolved this problem in full generality for analytic curves in a semisimple algebraic group by generalizing the concept of constraining pencils to unstable Schubert varieties. 

The goal of this paper is to extend the results of Lei Yang for translates of smooth curves and provide Lie-theoretic and geometric conditions on curves to ensure equidistribution. 

\subsection{Main result} 
To state our main theorem, we begin with some notation. 

Let $Q_n$ be a quadratic form in $n+1$ variables defined as $$Q_n(x_0, x_1, \cdots, x_n) = 2x_0x_n-(x_1^2+x_2^2+\cdots + x_{n-1}^2).$$ 
We identify $\mathrm{SO}(n,1)$ with $\mathrm{SO}(Q_n)=\{g\in \mathrm{SL}(n+1,\R): Q_n(gv)=Q_n(v)\, \forall v\in\R^{n+1}\}.$
For $n \geq 2$, this group has two connected components.
Throughout this paper, we let $\mathrm{SO}(n,1)$ denote its identity component $\mathrm{SO}_0(n,1)$. 

Let $G = G_1\times G_2 \times \cdots \times G_k$ where each factor is $G_i = \mathrm{SO}(n_i,1)$ with $n_i \geq 2$; Unless otherwise specified, the index $i$ will always range from $1$ to $k$.

Let $\pi_i: G \to G_i$ be the projection map onto the $i$-th factor. Let $L$ be a Lie group containing $G$, and $\Gamma$ be a lattice in $L$. 
Let $X = L/\Gamma$ and $x = l_0\cdot\Gamma \in L/\Gamma$ be a point whose $G$-orbit is dense in $X$. By replacing $\Gamma$ with $l_0 \Gamma l_0^{-1}$, we may assume without loss of generality that $x$ is the identity coset $x_0 = e \Gamma$.
Let $A = \{a(t) = \big( a_1(t), a_2(t), \cdots, a_k(t) \big) \}_{t \in \R}$ be an $\R$-diagonalizable one-parameter subgroup of $G$ such that each $A_i := \{a_i(t)\}_{t \in \R}$ is a nontrivial $\R$-diagonalizable subgroup of $G_i$.
By a suitable conjugation, we may write $a(t) =(a_1(t), a_2(t), \cdots, a_k(t))$ as $$\left(
\begin{pmatrix} e^{\zeta_1 t} & & \\ 
& I_{n_1-1} & \\ 
& & e^{-\zeta_1 t}
\end{pmatrix}, 
\begin{pmatrix} e^{\zeta_2 t} & & \\ 
& I_{n_2-1} & \\ 
& & e^{-\zeta_2 t}
\end{pmatrix},
\cdots, 
\begin{pmatrix} e^{\zeta_k t} & & \\ 
& I_{n_k-1} & \\ 
& & e^{-\zeta_k t}
\end{pmatrix}\right)
$$ for some positive constants $\zeta_i>0$. 
For simplicity, assume that $\zeta_i$'s are arranged in decreasing order; that is, $$\zeta_1 \geq \zeta_2 \geq \cdots \geq \zeta_k.$$     

Let $K_i \cong \mathrm{SO}(n_i)$ be a maximal compact subgroup of $G_i$ and let $M_i = Z_{G_i}(A_i) \cap K_i$ and $M = M_1 \times \cdots \times M_k$.
Let $P^-_i = \{g_i \in G_i: \lim_{t \to \infty} a_i(t) g_i a_i(t)^{-1} \text{ exsits  in } G_i\}$ and $P^- = \{g \in G: \displaystyle{\lim_{t\rightarrow \infty}} a(t)ga(t)^{-1} \text{ exists in } $G$\}$. 
Then 
\begin{equation}
    \begin{split}
    P^-\backslash G &= (P_1^-\backslash G_1) \times (P_2^-\backslash G_2) \times \cdots \times (P_k^-\backslash G_k)\\
    &\cong (M_1\backslash K_1) \times (M_2\backslash K_2) \times \cdots \times (M_k \backslash K_k)\\
    & \cong \mathbb{S}^{n_1-1} \times \mathbb{S}^{n_2-1}\times \cdots \times \mathbb{S}^{n_k-1}.
    \end{split}    
\end{equation}
Let $\mathcal{I}_i: G_i \rightarrow P_i^-\backslash G_i$ and $\mathcal{I}: G \to P^-\backslash G $ be the corresponding quotient maps. We note that the action of $G_i$ on $\mathbb{S}^{n_i-1}\cong P_i\backslash G_i$ is via Mobius transformations. 

Let $\mathscr{H}$ denote the collection of proper closed and connected (Lie) subgroups $H$ of $L$ such that $H\cap \Gamma$ is a lattice in $H$ and some $\Ad_L$-unipotent one-parameter subgroup of $H$ acts ergodically on $H/(H \cap \Gamma)$ with respect to the $H$- invariant measure $\mu_H$.

Define $V_L = \bigoplus_{i =1}^{\dim L} (\bigwedge^i \mathcal{L})$ where $\mathcal{L}$ is the Lie algebra of $L$ and $L$ acts on $V_L$ via the representation  $\Ad_L$.

For any Lie subgroup $H$ of $L$, choose $p_H \in \wedge^{\dim H} \mathrm{Lie}(H)\backslash \{0\}$.
Let 
\[
V_L^{0-}(A) = \{v \in V_L: \lim_{t \to \infty} a(t)v\in V_L \text{ exists}\}.
\]
We note that for $V_L^{0-}(A)$ is preserved by the action of $P^-$. 

For each $H \in \mathscr{H}$, define 
    \begin{align}
    \Delta_{H}&=\{g:g\in G,\, gp_H\in V^{0-}_L(A)\}
    \end{align}

\begin{defn}
    Let $\mathcal{J} \subset \{1, 2, \cdots, k\}$ be a set of indices and let $m_{\mathcal{J}} \in \N$ satisfying $1 \leq m_{\mathcal{J}}\leq \min_{j \in \mathcal{J}}n_j$. 
    Let $\iota_j: \mathbb{S}^{m_\mathcal{J}-1} \to \mathbb{S}^{n_j-1}$ be the standard inclusion $\mathbb{S}^{m_\mathcal{J}-1} \hookrightarrow \mathbb{S}^{n_j-1}$ followed by a M\"obius transformation on $\mathbb{S}^{n_j-1}$.
    We define the diagonal M\"obius embedding $\iota_{\mathcal{J}}: \mathbb{S}^{m_\mathcal{J}-1} \to \prod_{j \in \mathcal{J}} \mathbb{S}^{n_j-1}$ by setting $\iota_{\mathcal{J}} = (\iota_j)_{j \in \mathcal{J}}$. Here, we identify $\mathbb{S}^0$ with a single point. 
\end{defn}

\begin{defn}\label{def: 1. obstruction set} 
    Let $\mathscr{P} = \{\mathcal{J}_1, \mathcal{J}_2, \cdots \mathcal{J}_p\}$ be a partition of $\{1, 2, \cdots, k\}$ such that for each $\mathcal{J} \in \mathscr{P}$, we have $\zeta_{j_1} = \zeta_{j_2}$ for all $j_1, j_2 \in \mathcal{J}$. For each $\mathcal{J} \in \mathscr{P}$, we choose a diagonal M\"obius embedding $\iota_\mathcal{J}$ as defined above. We then define $\iota_{\mathscr{P}} = \prod_{\mathcal{J}\in \mathscr{P}} \iota_{\mathcal{J}}: \prod_{\mathcal{J} \in \mathscr{P}}\mathbb{S}^{m_\mathcal{J}-1} \to \prod_{i=1}^k \mathbb{S}^{n_i-1}$.
    
    Later, in Proposition~\ref{prop: 6.  u(x) is in circle}, we will show that for each $H\in \mathcal{H}$ such that $Gp_H$ is closed, we have that $\mathcal{I}(\Delta_{H})$ equals the image of $\iota_\mathscr{P}$ defined as above, that is, $\mathcal{I}(\Delta_{H})$ is the image of a M\"obius embedding of a product of subspheres into $\prod_{i=1}^k \mathbb{S}^{n_i-1}$.
    
\end{defn}

Let $I = [0,1]$ be a closed interval in $\R$. Let $\psi_i: I \rightarrow G_i$ be a curve and $\psi = (\psi_i)_{i = 1}^k : I \to \prod_{i=1}^k G_i$.
Let $\nu$ be the Lebesgue measure on $\R$. 

\begin{thm} \label{Theorem : 1. main theorem}

Let $\psi=(\psi_i)_{i=1}^k : I \to G=\prod_{i=1}^k G_i$ be a curve such that $\mathcal{I}\circ \psi$ is a $C^l$-map for some $l >\frac{2\zeta_1} {\zeta_k}$ and $(\mathcal{I}_i\circ\psi_i)'(s) \neq 0$  for all $1\leq i\leq k$ and almost every $s\in I$. Suppose that 
\begin{equation}
    \nu(\{s \in I : \psi(s) \in \Delta_H\}) = 0 \text{ for all } H\in\mathcal{H} \text{ such that } Gp_H \text{ is closed and } Gp_H \neq p_H.
\end{equation}
Then, for every $f \in C_c(L/\Gamma)$, we have 
\begin{equation}
    \lim_{t\rightarrow \infty} \int_0^1 f(a(t)\psi(s)x_0)ds = \int_{L/\Gamma} f\, d\mu_L,
\end{equation}
where $x_0=e\Gamma$, $\overline{Gx_0}=L/\Gamma$, and $\mu_L$ is the unique $L$-invariant probability measure on $L/\Gamma$.
\end{thm}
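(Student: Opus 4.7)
The plan follows the four-step paradigm of Shah and L.~Yang: non-divergence, unipotent invariance, Ratner classification, and linearization powered by Kempf's geometric invariant theory. Let $\mu_t := (a(t)\psi)_*(\nu|_I)$; by a standard argument it suffices to show $\mu_{t_n}\to\mu_L$ along every subsequence $t_n\to\infty$ with a weak-$*$ limit $\mu$. The first step is non-divergence: since $(\mathcal{I}_i\circ\psi_i)'\neq 0$ a.e.\ for each $i$, on a full-measure subset of $I$ the curve $\psi$ satisfies the Kleinbock--Margulis $(C,\alpha)$-good condition in each factor, which yields tightness of $\{\mu_t\}$. Any weak-$*$ limit $\mu$ is therefore a probability measure on $L/\Gamma$.

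The unipotent invariance step is the technical heart. Writing
\[
a(t)\psi(s+h) = \bigl[a(t)\psi(s+h)\psi(s)^{-1}a(-t)\bigr]\cdot a(t)\psi(s),
\]
I would Taylor expand $\psi(s+h)\psi(s)^{-1}$ to order $l$ and analyze its conjugation by $a(t)$. The expanding horospherical Lie algebra has weights $\zeta_1,\ldots,\zeta_k$, so choosing $h\sim e^{-\zeta_k t}$ keeps the $h$-linear contribution bounded. The Taylor remainder is of size $h^l = e^{-l\zeta_k t}$, which when conjugated and passed through the representation $\mathrm{Ad}_L$ on $V_L$ can be inflated by at most $e^{2\zeta_1 t}$; hence the hypothesis $l > 2\zeta_1/\zeta_k$ is exactly what forces the remainder to vanish in the limit. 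Averaging over short subintervals of length $h$, combined with $M$-action averaging and the factorwise non-vanishing of $(\mathcal{I}_i\circ\psi_i)'$, produces a unipotent subgroup $W\subset U^+(A)$ with nontrivial projection into every $G_i$ under which $\mu$ is invariant.

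Ratner's measure classification then decomposes $\mu$ as an integral of algebraic measures supported on orbits of subgroups $H\in\mathscr{H}$. If $\mu\neq\mu_L$, then a positive amount of mass sits on orbits $Hx$ with $H$ a proper subgroup of $L$. I would run the Dani--Margulis--Mozes--Shah linearization in the representation $V_L$ to convert this concentration into the assertion
\[
\nu\bigl(\{s\in I: \psi(s)\in\Delta_H\}\bigr) > 0 \quad\text{for some } H\in\mathscr{H}.
\]
To promote this to an $H$ with $Gp_H$ closed, I would invoke Kempf's GIT: the instability of $p_H$ under the one-parameter subgroup $A$ yields an optimal destabilizing subgroup $H'$ whose orbit $Gp_{H'}$ is closed, with $\Delta_H\subset\Delta_{H'}$. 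Proposition~\ref{prop: 6.  u(x) is in circle} then identifies $\mathcal{I}(\Delta_{H'})$ with a M\"obius-embedded product of subspheres, confirming $\Delta_{H'}$ is a proper obstruction, so $\nu(\{s:\psi(s)\in\Delta_{H'}\})>0$, contradicting the hypothesis. Hence $\mu=\mu_L$.

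The principal obstacle is Step 4: because $\psi$ is only $C^l$ rather than analytic, one cannot use the identity principle to promote a positive-measure incidence with $\Delta_H$ to a trajectory statement, so the linearization must be carried out quantitatively and uniformly over the countable family $\mathscr{H}$. Reducing from an arbitrary $H\in\mathscr{H}$ to one with closed $G$-orbit via Kempf's optimal destabilization, and matching this algebraic reduction to the concrete geometric description of $\mathcal{I}(\Delta_H)$ as a M\"obius-embedded product of subspheres, is where the real work lies.
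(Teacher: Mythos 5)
Your proposal is on the right track at a high level but has two genuine gaps, the first of which is fatal to the argument as written.

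\textbf{The nondivergence step is wrong as stated.} You claim that since $(\mathcal{I}_i\circ\psi_i)'\neq 0$ a.e., the curve satisfies the Kleinbock--Margulis $(C,\alpha)$-good condition on a full-measure subset. This is false: a merely $C^l$ curve with a.e.\ nonzero derivative can fail $(C,\alpha)$-goodness with any uniform constants --- this is exactly the obstruction the paper identifies in the introduction as the core difficulty in passing from analytic to smooth curves. The paper's resolution is structural, not cosmetic: one does \emph{not} prove nondivergence of $(a(t)\psi)_*\nu$ directly. Instead one fixes a good point $s_0$, replaces $\psi$ on the shrinking window $s_0 + e^{-mt}I$ (with $m=\zeta_k/2$) by its degree-$(l-1)$ Taylor polynomial $R_{s_t}$, and proves nondivergence for the resulting polynomial family via the basic estimate Proposition~\ref{prop: basic} (which is where representation theory and the SL$_2$-triple/Lemma~\ref{lemma: basic} argument enter). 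Polynomial maps \emph{are} $(C,\alpha)$-good with uniform constants, so Theorem~\ref{thm: nondivergence criterion} applies. Everything --- nondivergence, unipotent invariance, and linearization --- is run on these shrinking pieces; only at the very end is Theorem~\ref{Theorem : 1. main theorem} deduced from the pointwise shrinking statement by a covering argument. You mention $h\sim e^{-\zeta_k t}$ only at the unipotent-invariance step, but that is the wrong scale: the proof requires $m=\zeta_k/2$ precisely so that Claim~\ref{claim:nonempty} in the basic lemma has $\Lambda^+$ nonempty, which forces the exponent $\lambda_i m-\mu_i m+\mu_i\zeta_i\geq 0$. The factor $2$ in $l>2\zeta_1/\zeta_k$ comes from $m=\zeta_k/2$, not from an ``$e^{2\zeta_1 t}$ inflation'' as you suggest --- the adjoint action of $a(t)$ on the expanding direction inflates by at most $e^{\zeta_1 t}$.

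\textbf{The Kempf/GIT step is misdescribed.} You propose that instability of $p_H$ produces an $H'$ with $Gp_{H'}$ closed and $\Delta_H\subset\Delta_{H'}$, reducing the non-closed case to the closed one. The paper does something different and more direct: for $Gp_H$ not closed, Lemma~\ref{lemma: 2. convert not closed to unstable} and Proposition~\ref{proposition: computable lower bound} replace $p_H$ by a highest-weight vector $w_0$ in a concrete tensor power $W=(\R^{n_1+1})^{\otimes p_1}\otimes\cdots\otimes(\R^{n_k+1})^{\otimes p_k}$ of the standard representations, and Proposition~\ref{prop: 7. unstable gamma H constant} then shows via the explicit quadratic form $Q_{n_i}$ that $S_H$ forces some coordinate $\mathbf{x}_j$ to equal a fixed constant. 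Since $\varphi_j'\neq 0$ a.e., the preimage $I_H$ is Lebesgue-null, so the non-closed orbits are negligible and need not be matched against the hypothesis at all. There is no containment $\Delta_H\subset\Delta_{H'}$; the two cases are disposed of by entirely separate mechanisms, and only the closed-orbit case produces the M\"obius-embedded subsphere obstructions that feed the assumption of the theorem.
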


We will deduce Theorem \ref{Theorem : 1. main theorem} from a sharper result, Theorem \ref{Thm : 1. shrinking equidistribuiton at point}.
\begin{thm}[Equidistribution of expanding translate of shirinking  pieces of a curve]\label{Thm : 1. shrinking equidistribuiton at point}
    Let the notation and conditions be as in Theorem~\ref{Theorem : 1. main theorem}.
    Then, there exists a Lebesgue null set $E$ in $I$ such that for $m: = \frac{\zeta_k}{2}$ and for any $s_0 \in I\backslash E$, $f \in C_c(X)$, a sequence $\{s_n\}_{n \in \N}$ in $I$ and a sequence $\{t_n\}_{n \in\N}$ in $\R$ such that $s_n \to s_0$ and $t_n \to \infty $ as $n \to \infty$, we have
    \begin{equation}\label{equation: 1. equidist on shrinking curve}
        \lim_{n \to \infty}\int^1_0 f\big(a(t_n)\psi(s_n + e^{-mt_n}\eta)x_0\big) \, d\eta = \int_X f d\mu_L.
    \end{equation}
\end{thm}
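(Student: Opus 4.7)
The plan is to proceed by contradiction, applying Ratner's measure classification, the Mozes-Shah theorem on limits of algebraic measures, quantitative nondivergence, and the linearization technique, in a manner parallel to Shah's treatment of smooth curves in $\mathrm{SO}(n,1)$ \cite{Sha09SOn1smooth} and Lei Yang's analysis \cite{LYangProduct} for analytic curves in the product setting; the smoothness hypothesis forces all arguments to be made at a.e.\ $s_0$, which is what ultimately dictates the regularity requirement $l>2\zeta_1/\zeta_k$. Suppose the conclusion fails at some $s_0$. Then one extracts $f\in C_c(X)$ and sequences $s_n\to s_0$, $t_n\to\infty$ along which the pushforward measures
\[
\mu_n := (a(t_n)\psi(s_n+e^{-mt_n}\,\cdot\,)x_0)_\ast \nu|_{[0,1]}
\]
converge weak-$\ast$ to a limit $\mu$ with $\int f\, d\mu\neq \int f\, d\mu_L$. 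The first task is to verify $\mu(X)=1$ via a Dani-Margulis-type nondivergence bound for smooth curves in the product group; this is where the regularity $l>2\zeta_1/\zeta_k$ and the a.e.\ nonvanishing condition $(\mathcal{I}_i\circ\psi_i)'\neq 0$ enter, as they control the polynomial growth of $a(t)\psi(s+e^{-mt}\eta)$ acting on vectors in $V_L$ that represent rational subgroups.

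The second task is to show that $\mu$ is invariant under a nontrivial unipotent one-parameter subgroup of $L$, via a Taylor expansion of $\psi(s+e^{-mt}\eta)$ at $s_n$ and an analysis of the $\Ad(a(t_n))$-action on that expansion; the specific scaling $m=\zeta_k/2$ is chosen so that, after a subsequential renormalization of the base point $a(t_n)\psi(s_n)x_0$, a nontrivial finite transformation survives along a direction in $U^+$ associated with the minimally expanding factor. If $\mu\neq\mu_L$, Ratner's theorem combined with Mozes-Shah then produces $H\in\mathscr{H}$ with $Gp_H$ closed and $H\neq L$ for which $\mu$ assigns positive mass to $\pi(N(H,W))\setminus\pi(S(H,W))$, where $W$ is the unipotent subgroup just constructed and $\pi\colon L\to L/\Gamma$ is the projection.

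The third task applies the linearization technique to lift this to $V_L$: for a positive-measure set of $\eta\in[0,1]$, the vectors $a(t_n)\psi(s_n+e^{-mt_n}\eta)p_H$ accumulate in a compact subset of $V_L^{0-}(A)$. Invoking Proposition~\ref{prop: 6.  u(x) is in circle} to identify $\mathcal{I}(\Delta_H)$ as a M\"obius-embedded product of subspheres, and using Kempf's geometric invariant theory applied to the orbit $Gp_H$ together with the matched eigenvalue structure of $\Ad_L(a(t))$ on $V_L$, one unscales the shrinking argument to conclude that $\psi(s)p_H\in V_L^{0-}(A)$, i.e.\ $\psi(s)\in\Delta_H$, on a set of positive Lebesgue measure near $s_0$. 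Defining $E$ as the union, over the countable family $\{H\in\mathscr{H}:Gp_H\text{ closed}\}$, of bad Lebesgue-density points, together with the null set where the derivative condition fails, a Lebesgue-differentiation argument at a generic $s_0\notin E$ then contradicts the hypothesis $\nu(\psi^{-1}(\Delta_H))=0$.

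The main obstacle is this last passage. In the analytic setting \cite{LYangProduct, PYang20thesis} one uses rigidity of real-analytic maps to upgrade the point-limit statement at $s_0$ directly to a positive-Lebesgue-measure statement about $\psi$, whereas for merely $C^l$ curves one must set up quantitative linearization estimates, calibrated to the multi-scale action of $\Ad_L(a(t))$ on $V_L$, that make the passage back to $\psi$ succeed at every $s_0\notin E$. It is precisely this calibration, coupled with the need to compare polynomials arising from the various $\zeta_i$-eigenspaces, that forces the hypothesis $l>2\zeta_1/\zeta_k$ and pins down the choice $m=\zeta_k/2$.
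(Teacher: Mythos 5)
Your proposal identifies the correct toolkit (nondivergence, Taylor approximation, unipotent invariance of the limit, Ratner plus Mozes--Shah, linearization, Kempf's GIT via Proposition~\ref{prop: 7. unstable gamma H constant}, and Proposition~\ref{prop: 6.  u(x) is in circle} for the closed-orbit obstructions), and the broad outline matches the paper. Two points deserve correction.

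First, you do not clearly separate the polynomial approximation from the smooth curve, and this is load-bearing. The nondivergence criterion (Theorem~\ref{thm: nondivergence criterion}) and the linearization statement (Theorem~\ref{Theorem: 7. linearlization for equidistribution}) are for maps in $\mathcal{P}_d(L)$, i.e.\ polynomial maps of bounded degree, and $\eta\mapsto a(t_n)u(\varphi(s_n+e^{-mt_n}\eta))$ is not such a map. The paper's route is to first establish equidistribution for the measures $\mu_{s_0,t}$ built from the degree-$(l-1)$ Taylor polynomial $R_{s_t}$ (Theorem~\ref{thm: 2. equidistribution of polyn approx}), and then transfer to $\psi$ via the identity $a(t_n)u(\varphi(s_n+e^{-mt_n}\eta))=u\big(O(e^{(\zeta_1-ml)t_n})\big)\,a(t_n)u(R(e^{-mt_n}\eta))u(\varphi(s_n))$. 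The hypothesis $l>2\zeta_1/\zeta_k$ is precisely what makes $\zeta_1-ml<0$, so the perturbation dies as $n\to\infty$. Your description attributes the role of $l>2\zeta_1/\zeta_k$ to "comparing polynomials arising from the various $\zeta_i$-eigenspaces," which is not quite where it is used; it is used to control the conjugated Taylor remainder.

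Second, your closing step is mis-stated. You claim the linearization step yields $\psi(s)\in\Delta_H$ on a set of positive Lebesgue measure near $s_0$, followed by a Lebesgue differentiation argument. But the linearization bound holds over the shrinking window $[s_n,s_n+e^{-mt_n}]$, which collapses to $\{s_0\}$; no positive-measure conclusion is available. What the argument actually yields (by specializing to $\eta=0$ and passing to the limit) is the single statement $u(\varphi(s_0))p_H\in V_L^{0-}(A)$, i.e.\ $s_0\in I_H$. That alone contradicts the choice $s_0\in I\setminus E$, since $E$ is constructed in advance to contain all $I_H$ together with $E_3=\{s:\varphi_i'(s)=0\text{ for some }i\}$, and is a null set under the hypotheses. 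No density argument is needed, and the "union of bad Lebesgue-density points" framing you sketch is both extraneous and not well specified.
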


\subsection{Paper Organization and Proof Outline}
The proof of equidistribution on homogeneous spaces typically involves three main steps. 
 First, one proves the non-divergence (i.e., no escape of mass) of the limit measures.
 Next, it must be shown that the limit measure is invariant under a non-trivial unipotent subgroup. 
 Finally, the linearization technique is employed to demonstrate that these measures do not accumulate on lower- dimensional unipotent-invariant subvarieties immersed in the homogeneous space. Ratner's theorem then guarantees that the limit measure is the $L$-invariant measure on its homogeneous space.
 
In previous works (for example, \cite{Sha09SOn1smooth}, \cite{Sha09SLnR}, \cite{Sha09SOn1analytic}, and \cite{LYangProduct}), the Nondivergence Theorem by  Dani, Kleinbock, and Margulis (See \cite{DM93}, \cite{KM98}) has been a key tool to establish non-divergence of limit measures. Applying this theorem requires the given curve to satisfy a certain growth property called $(C, \alpha)$-goodness.
However, while analytic functions possess this property, smooth functions generally do not. 
To address this, following the approach of Shah and P. Yang (\cite{ShahPYang24}), we approximate $\mathcal{I}\circ\psi$ at each point $s_0 \in I\backslash E$ by an ($l-1$)-degree Taylor polynomial on shrinking intervals $Ie^{-mt}$ (for $m = \frac{\zeta_k}{2}$), to compensate for errors that expand due to the translation by $a(t)$.
We first demonstrate the equidistribution of parametric measures concentrated on these polynomial curves on the shrinking intervals through a sequence of arguments presented in Sections 2, 3, and 4. 

To apply the linearization technique, a linear dynamical result, often called the ``basic lemma'', is required; this can be found in the aforementioned papers on equidistribution. 
\cite{ShahPYang24} extended this result, originally for fixed-sized curves, to shrinking pieces of a curve for a particular case where $G = SL(n,\R)$. 
 In Section \ref{Section: Shrinking}, We prove that this linear dynamical result for shrinking pieces of a curve also holds when $G$ is a product of $\mathrm{SO}(n,1)$'s. 

In Section \ref{Sec: nondiv and unip inv}, we follow standard schemes to establish the nondivergence and unipotent invariance of the limit measure for measures concentrated on shrinking pieces of polynomial curves. 

In Section \ref{Sec: Equidistribution}, we identify obstructions to equidistribution and demonstrate that avoiding them guarantees equidistribution of the limit measure for measures concentrated on shrinking pieces of polynomial curves, thereby providing the proofs of our main results: Theorem \ref{Theorem : 1. main theorem} and Theorem \ref{Thm : 1. shrinking equidistribuiton at point}. 

Our approach is as follows. For each $H \in \mathscr{H}$, when the $G$ orbit of a vector $p_H$ is not closed, we employ Kempf's invariant theory \cite{Kempf78} and the technique developed by Shah and P.Yang \cite{SY24}. 
Together, these enable us to replace the given representation and vector whose $G$ orbit is not closed with a different pair, provided that the new pair meets certain conditions. 
We explicitly construct a new, more comuptable pair that meets the required conditions by utilizing the standard representation $\R^{n+1}$ of $\mathrm{SO}(n,1)$.  Calculations with this new pair then show that the obstructions arising from this non-closed case are negligible, provided that the derivative of our curve in each component is non-zero almost everywhere. 
Therefore, the main obstructions originate from the case where $Gp_H$ is closed. In this closed orbit case, the obstructions that $\mathcal{I}\circ\psi$ should avoid take the form of a M\"obius embedding of a product of subspheres into $\prod_{i=1}^k\mathbb{S}^{n_i-1}$. As $\mathscr{H}$ is countable, the resulting set of obstructions $\{ \Delta_H: H\in \mathcal{H} \text{ such that } Gp_H \text{ is closed} \} $ is countable. The countability of this set is essential. Unlike in the analytic case, a property holding for a smooth function on a set of positive measure does not imply it holds globally. Thus, the fact that the obstruction set is merely countable is what makes it possible for a smooth map to exist that avoids these conditions.

\section{Basic lemma for shrinking pieces of curves}\label{Section: Shrinking}
For each $i$, choose a Weyl group element $w_i$ in $G_i$ such that $w_i = w_i^{-1}$ and $w_ia_i(t)w_i^{-1} = a_i(-t)$ for all $t \in \R$. Consider the Bruhat decomposition $G_i = P_i^-N_i \cup P_i^-w_i$. Since $P_i^-w_i$ is only a single point in $P_i^-\backslash G_i$, and the first derivative of $\mathcal{I}_i \circ \psi_i$ is nonzero almost everywhere, one can reduce the problem to the case where each $\psi_i$ is contained in $N_i$. 

Let $N_i=U_{G_i}(A_i)^+$ be the expanding horospherical subgroup of $G_i$ with respect to $A_i$, see~\eqref{eq:horo}, and $N$ be the expanding horospherical subgroup of $G$ with respect to $A$. Then, $N = N_1\times N_2 \times \cdots N_k$. 
Similarly, let 
\[
N^-_i=\{u\in G_i:a_i(t)ua_i(-t)\to e\text{ as } t\to\infty\}
\]
denote the contracting horospherical subgroup of $G_i$ with respect to $A_i$ and $N^- = N^-_1 \times \cdots \times N^-_k$.

For each $\mathbf{x} \in \R^{\Sigma_{i=1}^k(n_i-1)}$, we write $\mathbf{x} = (\mathbf{x}_1, \mathbf{x}_2, \cdots, \mathbf{x}_k)$ for $\mathbf{x}_i \in \R^{n_i-1}$
Define an isomorphsim $u_i: \R^{n_i-1} \to N_i$ by 
\begin{equation*}
    u_i(\mathbf{x}_i) = 
    \begin{pmatrix} 1 & \mathbf{x}_i^t &            \frac{\norm{\mathbf{x}_i}^2}{2} \\
        & I_{n_1-1} & \mathbf{x}_i \\
        & & 1    
    \end{pmatrix}
\end{equation*}
for each $\mathbf{x}_i \in \R^{n_i-1}$ and then define $u: \R^{\Sigma_{i=1}^k(n_i-1)} \to N$ by $u(\mathbf{x})$
$ = (u_1(\mathbf{x_1}), u_2(\mathbf{x_2}), \cdots, u_k(\mathbf{x_k}))$ 
for each $\mathbf{x} = (\mathbf{x_1}, \mathbf{x_2}, \cdots, \mathbf{x_k}) \in \R^{\Sigma_{i = 1}^k (n_i-1)}$.  
Similarly, define isomorphisms $u_i^-:\R^{n_i-1} \to N^-_i$ and $u^-:\R^{\Sigma_{i=1}^k(n_i-1)} \to N^-$.

In this context, we define $\varphi_i: I \to \R^{n_i-1}$ satisfying $\psi_i= u_i \circ \varphi_i$ and $\varphi = (\varphi_i)_{i = 1}^k: I \to \R^{\Sigma_{i=1}^k (n_i-1)}$.

For each $H \in \mathcal{H}$, let
\begin{align}
    S_{H} &= \{\mathbf{x} \in \R^{ \Sigma_{i=1}^k (n_i-1)}: u(\mathbf{x})p_{H} \in V^{0-}_L(A) \} \text{ and } \\
    I_{H} &= \{s \in I: \varphi(s) \in S_{H}\} = \{s \in I: u(\varphi(s)) \in \Delta_H\}.
\end{align}

Let
\begin{equation}
    \begin{split}
        E_{1}  & = \bigcup_{\{H \in \mathscr{H} \,: \,Gp_H \text{ is closed in } V_L, \, Gp_H \neq p_H\}} I_{H},
    \end{split}
\end{equation}
\begin{equation}
    E_{2} = \bigcup_{\{H \in \mathscr{H}\, : \, G p_H \text{ is not closed in } V_L\}} I_{H},
\end{equation}
\begin{equation}
    E_{3} = \bigcup_{i = 1}^k\{s \in I : \varphi_i'(s) =0\} 
\end{equation}
 and define 
 \begin{equation}
     E = E_{1}\cup E_{2}\cup E_{3}.
 \end{equation}

Let $s_0 \in I\backslash E$. We pick $s_t \in I$ such that $s_t \to s_0$ as $t \to \infty$. Let $\eta \in I$. Choose $l \in \N$ such that $ l >\frac{2\zeta_1} {\zeta_k}\geq 2$. By Taylor's theorem, for any large enough $t$, 
\begin{equation*}
    \varphi(s_t+\eta e^{-mt}) = \varphi(s_t) + \sum_{j=1}^{l-1} \varphi^{(j)}(s_t)(\eta e^{-mt})^{j} + O(e^{-mlt})
\end{equation*}
Let
\begin{equation} \label{eq:Rst}
    R_{s_t}(\eta e^{-mt}) = \sum_{j=1}^{l-1} \varphi^{(j)}(s_t)(\eta e^{-mt})^{j}. 
\end{equation}
Writing $h=\eta e^{-mt}$, $\boldsymbol{\kappa}(t)=\varphi'(s_t)$, $\varepsilon(t)=h\sum_{j=2}^{l-1} \varphi^{(j)}(s_t)h^{j-2}$, and $\mathbf{y}(t)=\boldsymbol{\kappa}(t)+\epsilon(t)$, we get \\
$R_{s_t}(h)=h\mathbf{y}(t)$. Therefore
\begin{equation} \label{eq:yt}
u(R_{s_t}(h)) = u(h\mathbf{y}(t))=
\exp{((-mt + \log\eta)H_C)} \, u(\mathbf{y}(t)) \, \exp{((mt - \log\eta)H_C)}.
\end{equation}

\bigskip
We denote an $\eta$-parametric probability measure concentrated on $\{a(t)u(R_{s_t}(\eta\cdot e^{-mt}))u(\varphi(s_t))x : \eta \in I\}$ by $\mu_{s_0, t}$, in other words, $\mu_{s_0,t}$ is a measure on $X$ satisfying 
\begin{equation} \label{eq:mus0t}
    \int_{L/\Gamma} f\,d\mu_{s_0,t}= \int_0^1 f(a(t)u(R_{s_t}(\eta\cdot e^{-mt}))u(\varphi(s_t))x_0)d\eta,
\end{equation} for all $f \in C_c(X)$. 

We firstly show the following theorem and then derive Thereom \ref{Thm : 1. shrinking equidistribuiton at point}.

\begin{thm}\label{thm: 2. equidistribution of polyn approx}
    For $f \in C_c(X)$, given a family $s_t \to s_0$ in $I$ as $t \to \infty$, we have
    \begin{equation}
        \lim_{t \to \infty} \int_0^1 f(a(t)u(R_{s_t}(\eta\cdot e^{-mt}))u(\varphi(s_t))x_0)d\eta = \int_X f \, d\mu_L.
    \end{equation}
\end{thm}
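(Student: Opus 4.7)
The plan is to prove Theorem~\ref{thm: 2. equidistribution of polyn approx} via the classical three-step scheme — nondivergence, unipotent invariance, and linearization combined with Ratner's measure classification theorem — exploiting the polynomial structure of $R_{s_t}$ and the key conjugation identity \eqref{eq:yt}. Let $\mu$ denote any weak-$*$ subsequential limit of $\{\mu_{s_0,t_n}\}$ along a sequence $t_n\to\infty$; it suffices to show $\mu=\mu_L$.

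First, for nondivergence: since $R_{s_t}$ is a polynomial of degree at most $l-1$ in $\eta$ whose coefficients $\varphi^{(j)}(s_t)$ vary continuously and are bounded uniformly in $t$, the translated curves $\eta \mapsto a(t)u(R_{s_t}(\eta e^{-mt}))u(\varphi(s_t))x_0$ are uniformly $(C,\alpha)$-good on $I$. The Dani--Margulis--Kleinbock nondivergence theorem, which will be established in Section~\ref{Sec: nondiv and unip inv}, then forces $\mu$ to be a probability measure on $X$. For unipotent invariance, I would exploit \eqref{eq:yt} by commuting $a(t)$ past $\exp(\log h \cdot H_C)$ (they lie in a common abelian subgroup) and performing a change of integration variable $\eta\mapsto \eta+\delta$ inside the definition \eqref{eq:mus0t}. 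Passing to the limit $t\to\infty$, where the perturbation $\varepsilon(t)\to 0$ so that $\mathbf{y}(t)\to\varphi'(s_0)$, this change of variables corresponds to left-translation on $X$ by a one-parameter unipotent in the direction $\varphi'(s_0)$, which is nonzero for each factor since $s_0\notin E_3$. Hence $\mu$ is invariant under a nontrivial unipotent subgroup of $L$.

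By Ratner's measure classification theorem, $\mu$ is then a convex combination of homogeneous measures supported on closed $H$-orbits for $H\in\mathscr{H}$. To rule out concentration on any proper such $H$-orbit, I would apply the linearization technique: lifting to the representation $V_L$, concentration of $\mu$ on a proper orbit would force the curve $s\mapsto u(\varphi(s))x_0$ to spend positive Lebesgue measure in a tubular neighborhood of $\Delta_H$ near $s_0$. The shrinking-pieces basic lemma developed in Section~\ref{Section: Shrinking} is the crucial input here, replacing the classical linearization lemma on fixed-sized curves with one adapted to the rescaled intervals $e^{-mt}I$. The hypothesis $s_0\notin E_1\cup E_2$ then rules out both closed-orbit and non-closed-orbit obstructions (the latter handled via Kempf's geometric invariant theory to replace $(V_L,p_H)$ with a tractable pair, as sketched in the introduction), leaving $\mu=\mu_L$ as the only possibility.

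The main obstacle is the linearization step, in particular applying the shrinking-pieces basic lemma to the curve $\eta\mapsto u(\mathbf{y}(t))$ where $\mathbf{y}(t)=\boldsymbol{\kappa}(t)+\varepsilon(t)$ is only approximately linear, with $\varepsilon(t)$ small but nonzero. A secondary difficulty is verifying that the product structure $G=\prod\mathrm{SO}(n_i,1)$ yields obstructions of precisely the M\"obius-embedded product-subsphere form of Definition~\ref{def: 1. obstruction set}; this requires tracking how Weyl group elements in each factor interact with generalized diagonal embeddings, so that any $\Delta_H$ with $Gp_H$ closed has image $\mathcal{I}(\Delta_H)$ of the asserted geometric form (see Proposition~\ref{prop: 6.  u(x) is in circle}).
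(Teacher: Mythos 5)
Your overall scheme (nondivergence, unipotent invariance, linearization and Ratner) matches the paper's, but there is a genuine error in the unipotent invariance step. You claim the change of variables yields invariance of the limit measure under a one-parameter unipotent subgroup ``in the direction $\varphi'(s_0)$.'' This is false when the $\zeta_i$ are not all equal. The change of variables is a single scalar shift $\eta \mapsto \eta + \delta$, but the component $u_i$ scales under conjugation by $a_i(t)$ at rate $e^{\zeta_i t}$, while the reparametrization of the shrinking interval is uniformly at rate $e^{-mt}$. Left-multiplying by $u(r\varphi'(s_0))$ corresponds, in the $i$-th factor, to a shift of size $r e^{-\zeta_i t}$; to absorb it into the polynomial via a global substitution in $\eta$, the shifts in all factors must be governed by the same exponential scale, which forces one to use $\delta = re^{(m-\zeta_1)t}$. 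For $i \le k_1$ (where $\zeta_i = \zeta_1$) the bookkeeping closes; for $i > k_1$ (where $\zeta_i < \zeta_1$) the residual error is $O(e^{(\zeta_i - \zeta_1)t}) \to 0$ but contributes no invariance. The paper's Proposition~\ref{prop: unip inv} accordingly obtains invariance only under $\{u(r\mathbf{w}_{k_1})\}_r$, where $\mathbf{w}_{k_1}$ truncates $\varphi'(s_0)$ to the first $k_1$ components. This distinction is not cosmetic: it is exploited again in Proposition~\ref{prop: H not normal}, where the partial direction of invariance is compared with the maximal invariance group $W$ to rule out $H \triangleleft L$.

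A second, related omission: after Ratner's theorem produces a candidate $H\in\mathscr{H}$ with $\mu_{s_0}(N(W,H)x_0) > 0$, the linearization argument concludes $u(\varphi(s_0))p_H \in V_L^{0-}(A)$, i.e.\ $s_0 \in I_H$. To derive a contradiction with $s_0 \notin E$, one must also verify that $Gp_H \neq p_H$, since $E_1$ only collects $I_H$ for $H$ with $Gp_H$ closed \emph{and} $Gp_H\neq p_H$. This requires showing $H$ is not normal in $L$ (Proposition~\ref{prop: H not normal}), a step your proposal never mentions. Finally, a minor point: for nondivergence it is not enough to observe that polynomials of bounded degree are $(C,\alpha)$-good; the operative input is the quantitative lower bound of Proposition~\ref{prop: basic} showing $\sup_\eta\|a(t)u(R_{s_t}(\eta e^{-mt}))v\| \gtrsim \|v\|$ uniformly on the shrinking pieces, which is what prevents alternative~(1) of the Shah nondivergence criterion from holding.
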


\begin{defn}
    Let $n \in \N$ and $X \in \R^n\backslash\{0\}$. Define 
    \begin{equation}
        X^{-1} = \frac{X}{\norm{X}^2_2}
    \end{equation} 
    where $\norm{\cdot}_2$ is the standard Euclidean norm. 
\end{defn}

To show nondivergence and equidistribution of $\{\mu_{s_0,t}\}_t$, we need the following Proposition.

\begin{prop}\label{prop: basic}
    Let $V$ be a finite dimensional representation of $G$. Then, there exist $D_2>0$ and $T>0$ such that for any $v \in V$ and $t\geq T$, 
    \begin{equation}\label{equation: basic}
        M_t := \sup_{\eta \in I}\norm{a(t)u(R{s_t}(\eta e^{-mt}))v} \geq D_2\norm{v}
    \end{equation}
    where $\norm{\cdot} $ is the sup-nrom on $V$.
\end{prop}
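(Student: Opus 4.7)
The idea is to realize $a(t)u(R_{s_t}(\eta e^{-mt}))v$ as a $V$-valued polynomial in $\eta$ of degree bounded in terms of $V$ and $l$, apply a standard polynomial inequality on $[0,1]$, and deduce a lower bound on one of its coefficients from the weight decomposition of $V$ together with the uniform nonvanishing $\varphi'_i(s_t)\neq 0$ (which is available because $s_t\to s_0\in I\setminus E_3$).

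\textbf{Reduction via conjugation.} Each $N_i$ is abelian (being the expanding horospherical of $\mathrm{SO}(n_i,1)$), so $u_i:\R^{n_i-1}\to N_i$ is a Lie group isomorphism, and $a_i(t)u_i(\mathbf{x}_i)a_i(-t)=u_i(e^{\zeta_i t}\mathbf{x}_i)$. Combining these identities, I would write
\[
 a(t)u(R_{s_t}(\eta e^{-mt}))v \;=\; u(\widetilde R_t(\eta))\,w,\qquad w:=a(t)v,
\]
with $\widetilde R_t(\eta)_i = \sum_{j=1}^{l-1}\varphi_i^{(j)}(s_t)\,\eta^j\,e^{(\zeta_i-jm)t}$. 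By the choice $m=\zeta_k/2$ and the hypothesis $l>2\zeta_1/\zeta_k$, the coefficient of $\eta^1$ in $\widetilde R_t$ grows exponentially in $t$ (since each $\zeta_i-m\ge m>0$) while the coefficient of $\eta^{l-1}$ is bounded (since $\zeta_i-(l-1)m\le 0$).

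\textbf{Polynomial inequality.} The group $N$ acts on $V$ through $u(\mathbf{z})=\exp(\tilde\rho(\mathbf{z}))$ with $\tilde\rho$ linear and nilpotent of some bounded order $d=d(V)$, so $P_t(\eta):=u(\widetilde R_t(\eta))w$ is a $V$-valued polynomial in $\eta$ of degree at most $D:=d(l-1)$. A classical polynomial inequality on $[0,1]$ (equivalence of sup-norm and coefficient sup-norm in the fixed-degree space) gives a universal $c_0=c_0(D,\dim V)>0$ with
\[
 \sup_{\eta\in[0,1]}\norm{P_t(\eta)} \;\ge\; c_0\,\max_j\norm{a_j(t)},\qquad P_t(\eta)=\sum_j a_j(t)\,\eta^j.
\]
Hence it suffices to produce, for each $v$ and each $t$ large, some $j$ with $\norm{a_j(t)}\ge D_2 \norm{v}$.

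\textbf{Coefficient bound and main obstacle.} Decompose $V=\bigoplus_\lambda V_\lambda$ into $A$-weight spaces and write $v=\sum_\lambda v_\lambda$; pick $\lambda^*$ with $\norm{v_{\lambda^*}}\ge\norm{v}/\dim V$. Since $\tilde\rho_i$ raises the $A_i$-weight by $\zeta_i$ and is linear in its argument, I would choose a multi-index $(j_1,\ldots,j_k)$, bounded above by the $A_i$-weight extent of $V$, so that $X(t):=\prod_i\tilde\rho_i(\varphi'_i(s_t))^{j_i}v_{\lambda^*}$ is nonzero and the exponential rate $\langle\lambda^*,\log a(t)\rangle/t+\sum_i(\zeta_i-m)j_i$ is $\ge 0$. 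For such a choice, the contribution of the $\eta^1$-terms of $\widetilde R_t$ to the $\eta^{\sum j_i}$-coefficient of $P_t$ is a nonzero multiple of $X(t)$ lying in the weight space $V_{\lambda^*+\sum_i j_i\zeta_i e_i}$, of norm $\gtrsim\norm{v}\prod_i\norm{\varphi'_i(s_t)}^{j_i}\gtrsim\norm{v}$, using $\norm{\varphi'_i(s_t)}\ge c>0$ uniformly for $t$ large by continuity and $s_0\notin E_3$. Contributions from higher-order $\eta$-terms of $\widetilde R_t$ lie in different weight components of $V$ or in different polynomial coefficients of $P_t$, so no cancellation can occur. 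The main technical difficulty is proving the existence of a multi-index $(j_i)$ satisfying both the nonvanishing of $X(t)$ and the required numerical balance; this reduces to a weight arithmetic using the bounded-interval structure of the $A_i$-weights on each irreducible $\mathrm{SO}(n_i,1)$-representation, and the choice $m=\zeta_k/2$ enters essentially here, since $\zeta_i-m\ge m>0$ is precisely what makes the balance achievable uniformly across the $k$ factors and all weights occurring in $V$.
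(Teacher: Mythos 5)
Your plan follows the same broad contour as the paper's proof (reduce to polynomials in $\eta$, apply a coefficient inequality on $I$, and extract a coefficient that stays bounded below via a weight-raising argument in which $m=\zeta_k/2$ is the crucial balance), but it leaves the central difficulty unaddressed. The entire point of the proof is the existence of the multi-index you call $(j_1,\ldots,j_k)$ such that \emph{both} $X(t)\neq 0$ \emph{and} the total exponential rate is $\geq 0$. You flag this as "the main technical difficulty," and then wave at "weight arithmetic," but the required input is a genuine $\mathrm{SL}_2$-theoretic fact, not mere bookkeeping with weight intervals: one needs that applying a nontrivial unipotent $u_i(\mathbf{x}_i)$ to a weight vector $v_{\lambda_i}$ with $\lambda_i<0$ produces a nonzero component in some weight $\mu_i\geq -\lambda_i$. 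This is exactly Lemma~\ref{lemma: basic} (quoted from [ShahLYang], Lemma 4.1: $\lambda^{\max}(u(r)v)\geq -\lambda^{\max}(v)$), obtained by exhibiting an explicit $\mathfrak{sl}_2$-triple in $\mathfrak{g}_i$ built from $\mathbf{x}_i$. Once one has it, the balance $\lambda_i m - \mu_i m + \mu_i\zeta_i \geq \mu_i(\zeta_i-2m)=\mu_i(\zeta_i-\zeta_k)\geq 0$ drops out; without it, nothing forces the relevant coefficient to be nonzero. Your sketch contains no substitute for this lemma.

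A second gap is the uniformity of the constant in $v$. The paper handles this by a conjugation trick you do not fully exploit: writing $R_{s_t}(h)=h\mathbf{y}(t)$ and conjugating by $\exp((mt-\log\eta)H_C)$, so the entire unipotent action is by a single group element $u(\mathbf{y}(t))$ with $\mathbf{y}(t)\to\varphi'(s_0)$; then a compactness argument over the compact set $\{\varphi'(s_t):t\geq T_1\}\cup\{\varphi'(s_0)\}$ and over unit vectors in each weight space $V^1_{(\lambda_1,\dots,\lambda_k)}$ (Claim~\ref{claim: Lambda plus bound}) produces the uniform $D_1>0$, after which the perturbation $\norm{u(\mathbf{y}(t))-u(\boldsymbol{\kappa}(t))}_{op}\leq D_1/2$ is absorbed. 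By contrast, you expand $u(\widetilde R_t(\eta))$ and try to isolate the $\eta^1$-contribution of $\widetilde R_t$; besides being less clean, your "no cancellation" claim is only partially justified (it fences off higher-order $\eta$-terms of $\widetilde R_t$ at a \emph{fixed} $\lambda^*$, but does not address contributions from other weight components $\lambda'\in\Lambda_v$ with $\sum\lambda_i'=\sum\lambda_i^*$ landing in the same polynomial coefficient and same target weight). Your proposal does correctly identify the role of the polynomial inequality (Lemma~\ref{lemma: polynomial bound}) and the reason $m=\zeta_k/2$ is chosen, but as written it is a plan with the hardest step declared rather than proved.
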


\begin{rem}
All finite-dimensional representation of $G$ considered in this paper is assumed to be endowed with the sup-norm with respect to a basis of eigenvectors for a fixed Cartan sublagebra containing $\mathrm{Lie}(A)$. 
\end{rem}

To prove Proposition~\ref{prop: basic}, we need the following lemmas. The first, Lemma~\ref{lemma: basic} is a part of a result of Lemma 4.1 in \cite{ShahLYang}.

\begin{lem}[~\cite{ShahLYang}, Lemma 4.1]\label{lemma: basic} Let $V$ be a finite dimensional representation of $SL(2,\R)$. Let $D = \begin{pmatrix}
    1 & 0 \\
    0 & -1
\end{pmatrix} \in \mathfrak{sl}(2, \R)$. 
$V$ can be decomposed into eigenspaces with respect to the action of $D$, i.e., $$V = \bigoplus_{\lambda \in \R} V_\lambda \text{ where } V_\lambda = \{v \in V : Dv = \lambda v \}.$$
Let $v = \Sigma_{\lambda \in \R} \, v_\lambda \in V$ where $v_\lambda$ is $V_\lambda$-component of $v$. Define $\lambda^\text{max}(v) = \max \{ \lambda: v_\lambda \neq 0\}$ and $v^\text{max} = v_{\lambda^\text{max}(v)}$.
For any $r \in \R$, define $u(r) = \begin{pmatrix}
    1 & r\\
    0 & 1
\end{pmatrix}$.

Then, for any $r \neq 0$, 
$$\lambda^{\text{max}}\big(u(r)v\big) \geq -\lambda^{\text{max}}(v).$$

\end{lem}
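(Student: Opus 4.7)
The plan is to reduce to a single irreducible summand of $V$ and then apply a polynomial-degree argument based on an explicit Taylor-style identity that encodes all the weight components of $u(r)v$ as successive derivatives of one polynomial. First, using complete reducibility, decompose $V = \bigoplus_\alpha V^{(N_\alpha)}$ into $\mathfrak{sl}(2,\R)$-irreducibles and write $v = \sum_\alpha v^{(\alpha)}$; choose an index $\alpha^*$ with $\lambda^{\max}(v^{(\alpha^*)}) = \lambda^{\max}(v) =: \lambda_0$. Since each weight space decomposes as $V_\mu = \bigoplus_\alpha V^{(N_\alpha)}_\mu$, it suffices to produce a weight $\mu \geq -\lambda_0$ for which $[u(r)v^{(\alpha^*)}]_\mu \neq 0$ inside $V^{(N_{\alpha^*})}$; that component then automatically survives in $[u(r)v]_\mu$.

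To treat a single irreducible $V^{(N)}$, I realize it concretely as $\mathrm{Sym}^N(\R^2)$ with basis $f_k = x^k y^{N-k}$ of weight $2k-N$, and use the fact that $u(r)$ fixes $x$ and sends $y$ to $rx+y$, so $(u(r) v)(x,y) = v(x,\, rx+y)$. Writing $v = \sum_k c^{(k)} f_k$ and
\[
P(r) := v(1,r) = \sum_{k=0}^N c^{(k)} r^{N-k},
\]
the substitution $x = 1$ together with the Taylor expansion of $v(1,\, r+y)$ in $y$ around $0$ yields the key identity
\[
[u(r) v]_{N-2j} = \frac{P^{(j)}(r)}{j!}, \qquad j = 0, 1, \ldots, N,
\]
so that the weight components of $u(r)v$ are precisely the successive normalized derivatives of a single polynomial $P$ of degree at most $N$.

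With this identity in hand, the lemma reduces to a clean degree count. The hypothesis $\lambda^{\max}(v) = \lambda_0$ forces $c^{(k)} = 0$ for $k > (N+\lambda_0)/2$ and $c^{((N+\lambda_0)/2)} \neq 0$, so the lowest-order term of $P$ is $c^{((N+\lambda_0)/2)} r^{(N-\lambda_0)/2}$; in particular, $0$ is a zero of $P$ of order exactly $(N-\lambda_0)/2$. Requiring $[u(r)v]_\mu = 0$ simultaneously for every $\mu \geq -\lambda_0$ is equivalent to $P^{(j)}(r) = 0$ for $j = 0, 1, \ldots, (N+\lambda_0)/2$, i.e., $r$ is a zero of $P$ of multiplicity at least $(N+\lambda_0)/2 + 1$. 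Combined with the multiplicity at $0$, the total multiplicity of zeros of $P$ would then be at least $N+1$, contradicting $\deg P \leq N$. Therefore some $\mu \geq -\lambda_0$ must satisfy $[u(r)v]_\mu \neq 0$, and hence $\lambda^{\max}(u(r)v) \geq -\lambda^{\max}(v)$.

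The main obstacle is spotting the Taylor-style identity above that packages all weight components of $u(r)v$ into successive derivatives of one polynomial; once it is visible, the remainder is a one-line polynomial-degree count, and the passage from a single irreducible back to a general $V$ is immediate from the direct-sum decomposition of weight spaces.
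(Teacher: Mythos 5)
Your proof is correct, and the argument is complete. A small caveat: the paper does not actually include a proof of this lemma --- it simply imports it as Lemma 4.1 of Shah--Yang, so there is no in-paper proof to compare against. On its own merits, though, your argument works.

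The reduction to a single irreducible summand is sound because the weight-$\mu$ component of $v=\sum_\alpha v^{(\alpha)}$ is the direct sum of the weight-$\mu$ components of the $v^{(\alpha)}$, so a nonzero contribution from $V^{(N_{\alpha^*})}$ cannot be cancelled by the other summands. The realization of $V^{(N)}$ as $\mathrm{Sym}^N(\R^2)$ with $u(r)\colon x\mapsto x,\ y\mapsto rx+y$ is standard, and your identity
\[
[u(r)v]_{N-2j}=\frac{P^{(j)}(r)}{j!},\qquad P(r)=\sum_{k}c^{(k)}r^{N-k},
\]
is a correct bookkeeping of the binomial expansion $\sum_k c^{(k)}x^k(rx+y)^{N-k}$: the coefficient of $x^{N-j}y^j$ is $\sum_k c^{(k)}\binom{N-k}{j}r^{N-k-j}$, exactly $P^{(j)}(r)/j!$. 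The degree count is airtight: $P\not\equiv 0$ (since $c^{((N+\lambda_0)/2)}\neq 0$) has a zero of order exactly $(N-\lambda_0)/2$ at $0$; if every component $[u(r)v^{(\alpha^*)}]_\mu$ with $\mu\geq-\lambda_0$ (equivalently $j\leq(N+\lambda_0)/2$) vanished, then $r\neq 0$ would be a zero of order at least $(N+\lambda_0)/2+1$, giving total multiplicity $N+1>\deg P$, a contradiction. Note also that $N+\lambda_0$ is even since all weights of $V^{(N)}$ are congruent to $N\bmod 2$, so the index $(N+\lambda_0)/2$ is an integer. This is a clean, self-contained proof in the spirit of counting zeros of a single polynomial encoding all weight components at once.
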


\begin{lem}[~\cite{ShahPYang24}, Lemma 3.5]\label{lemma: polynomial bound} Let $J \subset (0, \infty)$ be an interval of finite positive length. Fix $d \in \N$. Then there exists a constant $C_{d,J}>0$ such that for any polynomial $f(\eta) = \sum_{j = 0}^d a_j\eta^j$ of degree $d$ where $a_j \in \R$,            \begin{equation}
        \sup_{\eta \in J} |f(\eta)| \geq C_{d,J}\max_{j = 0}^d|a_j|.
    \end{equation}
\end{lem}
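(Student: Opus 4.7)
The statement is the assertion that on the $(d{+}1)$-dimensional space $P_d$ of real polynomials of degree at most $d$, the sup norm on a fixed interval $J\subset(0,\infty)$ of positive length dominates the max-coefficient norm up to a constant depending only on $d$ and $J$. The plan is to recognize this as an instance of the equivalence of norms on a finite-dimensional real vector space.

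First, I would set up the two candidate norms on $P_d$:
\[
\|f\|_{\infty,J} := \sup_{\eta\in J}|f(\eta)|, \qquad \|f\|_{\mathrm{coef}} := \max_{0\le j\le d}|a_j|,
\]
for $f(\eta)=\sum_{j=0}^d a_j\eta^j$. Both are positively homogeneous and subadditive, and $\|\cdot\|_{\mathrm{coef}}$ is clearly a norm. The only nontrivial point for $\|\cdot\|_{\infty,J}$ is positive definiteness: if $\|f\|_{\infty,J}=0$ then $f$ vanishes on the interval $J$, which contains infinitely many points, and a polynomial of degree at most $d$ with infinitely many zeros must be identically zero. This is where the hypothesis that $J$ has positive length enters.

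Since $P_d$ is $(d{+}1)$-dimensional, any two norms on it are equivalent, so there exists a constant $C_{d,J}>0$, depending only on $d$ and $J$, such that
\[
\|f\|_{\infty,J} \;\ge\; C_{d,J}\,\|f\|_{\mathrm{coef}} \qquad\text{for all }f\in P_d,
\]
which is exactly the claim.

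If I wanted to avoid quoting the general equivalence theorem, I would argue by compactness: assume for contradiction that no such $C_{d,J}$ exists; then there is a sequence $f_n\in P_d$ with $\|f_n\|_{\mathrm{coef}}=1$ and $\|f_n\|_{\infty,J}\to 0$. The coefficient unit sphere in $P_d$ is compact (since $\dim P_d<\infty$), so after passing to a subsequence the coefficients converge, giving a limit $f_\ast\in P_d$ with $\|f_\ast\|_{\mathrm{coef}}=1$. Uniform coefficient convergence forces $f_n\to f_\ast$ uniformly on the bounded set $J$, hence $\|f_\ast\|_{\infty,J}=0$, so $f_\ast\equiv 0$ on $J$ and therefore on $\R$, contradicting $\|f_\ast\|_{\mathrm{coef}}=1$. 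There is no real obstacle here; the only subtlety is noting that positive length of $J$ is needed to ensure $\|\cdot\|_{\infty,J}$ separates points on $P_d$.
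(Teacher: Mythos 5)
Your proof is correct. Note that the paper itself offers no argument for this statement: it is quoted directly from \cite{ShahPYang24}, Lemma 3.5, so there is no internal proof to compare against. Your argument --- viewing $\sup_{\eta\in J}|\cdot|$ and the max-coefficient functional as two norms on the $(d{+}1)$-dimensional space of polynomials of degree at most $d$ and invoking equivalence of norms, or equivalently the self-contained compactness argument on the coefficient unit sphere --- is the standard proof and is complete. Two small remarks: the statement's phrase ``of degree $d$'' should be read as ``of degree at most $d$,'' which your formulation with $P_d$ handles automatically (and proves the slightly stronger claim); and you correctly located where each hypothesis enters, namely positive length of $J$ gives positive-definiteness of the sup norm (a degree-$\le d$ polynomial vanishing on infinitely many points is zero), while finiteness of the length guarantees $J$ is bounded, which is what your uniform-convergence step uses.
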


\noindent \textbf{Proof of Proposition \ref{prop: basic}.} Since $G$ is semisimple, without loss of generality, we can assume $V$ is an irreducible representation of $G$. Then $V$ can be considered as a subrepresentation of $V_1\otimes V_2 \otimes \cdots \otimes V_k$ for some irreducible representation $V_i$ of $G_i$ for $1 \leq i \leq k$. Let $\mathfrak{g}_i$ denote the Lie algebra of $G_i$ and $\mathfrak{g}$ denote the Lie algebra of $G$. Consider an element $$H_i = \begin{pmatrix}
    1 & & 
    \\ & O_{n_i-1} & 
    \\ & & -1
\end{pmatrix}$$ in $\mathfrak{g}_i$ and let $\mathfrak{h}_i$ be the Cartan subalgebra of $\mathfrak{g}_i$ containing $H_i$. 
Let $H_C = (H_1, \cdots, H_k) \in \mathfrak{g}$.
Define $$\Lambda(V_i) = \{ \lambda \in \R: \exists w \in V_i\backslash\{0\} \text{ such that }  H_iw = \lambda w\}.$$ 

Then, for any $v \in V$, there exists $\Lambda_v\subset \Pi_{i =1}^k \Lambda(V_i)$ such that 
\begin{equation}\label{eq:Lambda_v}
v = \sum_{(\lambda_1, \dots , \lambda_k) \, \in \,\Lambda_v} v_{\lambda_1}\otimes \cdots \otimes v_{\lambda_k},
\end{equation}
where $v_{\lambda_i} \in V_i\setminus\{0\}$, and $H_iv_{\lambda_i} = \lambda_iv_{\lambda_i}$ for $1 \leq i \leq k$. 
For any $(\lambda_1, \dots , \lambda_k) \in \Lambda_v$, let $$[v]_{(\lambda_1, \dots , \lambda_k)} =  v_{\lambda_1}\otimes \cdots \otimes v_{\lambda_k}$$ and for any subset $S \subset \Lambda_v$, let $$[v]_S = \sum_{(\lambda_1, \cdots, \lambda_k) \in S} \, v_{\lambda_1}\otimes \cdots \otimes v_{\lambda_k}.$$

Let $v\in V\setminus\{0\}$ be given. Let $\mathbf{y}(t)$ be as in \eqref{eq:yt}. 

\begin{claim}\label{claim:positive} Let $(\lambda_1, \dots , \lambda_k) \in \Lambda_v$. For any $(\mu_1, \cdots , \mu_k) \in \Lambda_{u(\mathbf{y}(t))v_{\lambda_1}\otimes \cdots \otimes v_{\lambda_k}}$, we have $\mu_i -\lambda_i \in \Z_{\geq 0}$.
\end{claim}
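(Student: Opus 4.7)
The plan is to decompose the action of $u(\mathbf{y}(t))$ factor by factor and reduce the claim to a one-factor weight calculation in the expanding horospherical direction. Since
\[
u(\mathbf{y}(t))\bigl(v_{\lambda_1}\otimes\cdots\otimes v_{\lambda_k}\bigr)
= u_1(\mathbf{y}_1(t))v_{\lambda_1}\otimes\cdots\otimes u_k(\mathbf{y}_k(t))v_{\lambda_k},
\]
and since each $H_i$ acts on $V_1\otimes\cdots\otimes V_k$ only through the $i$-th tensor slot, the joint $(H_1,\dots,H_k)$-eigenvalue of any simple-tensor summand is just the componentwise collection of individual $H_j$-weights of its factors. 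Consequently the claim reduces to the following one-factor statement: for each $i$ and each $H_i$-weight vector $w\in V_i$ of weight $\lambda_i$, every $H_i$-weight appearing in the weight decomposition of $u_i(\mathbf{x}_i)w$ lies in $\lambda_i+\Z_{\geq 0}$.

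To prove this one-factor statement, I would first verify the identity $u_i(\mathbf{x}_i)=\exp(X_i(\mathbf{x}_i))$, where
\[
X_i(\mathbf{x}_i)=\begin{pmatrix} 0 & \mathbf{x}_i^t & 0 \\ 0 & 0 & \mathbf{x}_i \\ 0 & 0 & 0 \end{pmatrix}.
\]
A direct block computation shows that $X_i(\mathbf{x}_i)^2$ is $\|\mathbf{x}_i\|^2$ times the $(1,n_i+1)$ elementary matrix and $X_i(\mathbf{x}_i)^3=0$, so $\exp(X_i(\mathbf{x}_i))=I+X_i(\mathbf{x}_i)+\tfrac12 X_i(\mathbf{x}_i)^2$ matches the defining block-matrix formula for $u_i(\mathbf{x}_i)$. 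A second short matrix computation yields $[H_i,X_i(\mathbf{x}_i)]=X_i(\mathbf{x}_i)$, which places $X_i(\mathbf{x}_i)$ in the $+1$ eigenspace of $\mathrm{ad}(H_i)$; equivalently, $X_i(\mathbf{x}_i)\in\mathrm{Lie}(N_i)$ is a sum of root vectors of root $+1$ with respect to $H_i$.

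With these two identities in hand, induction on $j$, using $[H_i,X_i(\mathbf{x}_i)]=X_i(\mathbf{x}_i)$ and $H_iw=\lambda_i w$, gives $H_i\,X_i(\mathbf{x}_i)^j w=(\lambda_i+j)X_i(\mathbf{x}_i)^j w$, so $X_i(\mathbf{x}_i)^j w$ is either $0$ or an $H_i$-weight vector of weight $\lambda_i+j$. Expanding
\[
u_i(\mathbf{x}_i)w = w + X_i(\mathbf{x}_i)w + \tfrac12 X_i(\mathbf{x}_i)^2 w
\]
thus presents $u_i(\mathbf{x}_i)w$ as a sum of $H_i$-weight vectors with weights in $\{\lambda_i,\lambda_i+1,\lambda_i+2\}$. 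Reassembling across the tensor factors, every $(\mu_1,\dots,\mu_k)$ appearing in the weight expansion of $u(\mathbf{y}(t))(v_{\lambda_1}\otimes\cdots\otimes v_{\lambda_k})$ satisfies $\mu_i-\lambda_i\in\{0,1,2\}\subset\Z_{\geq 0}$ for each $i$, as claimed. The proof is essentially weight bookkeeping: there is no substantive obstacle, and the only points requiring genuine care are the two short matrix identities $u_i(\mathbf{x}_i)=\exp(X_i(\mathbf{x}_i))$ and $[H_i,X_i(\mathbf{x}_i)]=X_i(\mathbf{x}_i)$.
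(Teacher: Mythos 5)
Your proof takes the same route as the paper: reduce to a one-factor statement, identify $u_i(\mathbf{x}_i)$ with $\exp(X_i(\mathbf{x}_i))$, and use $[H_i,X_i(\mathbf{x}_i)]=X_i(\mathbf{x}_i)$ so that each application of $X_i(\mathbf{x}_i)$ raises the $H_i$-weight by $1$. The paper packages the raising vector with a lowering vector and $\mathcal{H}=2H_i$ into an $\mathfrak{sl}(2,\R)$-triple and appeals to ``the standard representation theory of $SL_2$,'' whereas you work directly from the $\mathrm{ad}$-eigenvalue; this is the same idea, and your version is arguably more self-contained.

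There is, however, a genuine error at the final step. You write $u_i(\mathbf{x}_i)w = w + X_i(\mathbf{x}_i)w + \frac{1}{2}X_i(\mathbf{x}_i)^2 w$ for $w\in V_i$ and conclude the weights lie in $\{\lambda_i,\lambda_i+1,\lambda_i+2\}$. The identity $X_i(\mathbf{x}_i)^3=0$ you use to truncate the exponential is a matrix identity in $\mathfrak{so}(n_i,1)\subset\mathrm{End}(\R^{n_i+1})$; it says nothing about the nilpotency order of the induced operator $d\rho_i(X_i(\mathbf{x}_i))$ on an arbitrary irreducible $V_i$. For instance on $\mathrm{Sym}^d(\R^{n_i+1})$ that operator has nilpotency degree growing with $d$, so the exponential contributes weights $\lambda_i+j$ for $j$ well beyond $2$, and your refined claim about $\{\lambda_i,\lambda_i+1,\lambda_i+2\}$ is false in general. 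The needed conclusion $\mu_i-\lambda_i\in\Z_{\geq 0}$ still holds, by the correct part of your argument: $d\rho_i(X_i(\mathbf{x}_i))$ is nilpotent on the finite-dimensional $V_i$ (representations of a semisimple Lie algebra send nilpotents to nilpotents), so the exponential series is finite, and by your induction each surviving term $(d\rho_i X_i(\mathbf{x}_i))^j w$ carries $H_i$-weight $\lambda_i+j$ with $j\in\Z_{\geq 0}$. The fix is simply to drop the assertion that the series stops at $j=2$.
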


\begin{proof}[Proof of claim \ref{claim:positive}] 
By \eqref{eq:Lambda_v},  $[u_i(\mathbf{y}_i(t))v_{\lambda_i}]_{\mu_i} \neq 0$ for all $i$.
For each $ 1 \leq i \leq k$, if $\mathbf{y}_i(t) = 0$, $\mu_i = \lambda_i$.

Now suppose that $\mathbf{y}_i(t) \neq 0$. Then 
$$\mathcal{X}=\begin{pmatrix}
    0 &\mathbf{y}_i(t)^T & 0 \\
    & 0 & \mathbf{y}_i(t) \\
     & & 0
\end{pmatrix}, 
\mathcal{Y}=\begin{pmatrix}
0 & & \\
2\mathbf{y}_i(t)^{-1} & 0 & \\
0 & (2\mathbf{y}_i(t)^{-1})^T & 0 
\end{pmatrix} \text{ and } 
 \mathcal{H}=2H_i$$ form a $SL_2$-triple in $\mathfrak{g}_i$; that is, 
 $$
 [\mathcal{H},\mathcal{X}]=2\mathcal{X},\ 
 [\mathcal{H},\mathcal{Y}]=2\mathcal{Y} \text{, and }
 [\mathcal{X},\mathcal{Y}]=\mathcal{H}.
 $$
 Therefore, by the standard representation theory of $SL_2$, 
     $$\mu_i\geq \lambda_i \text{ and } \mu_i - \lambda_i \in \Z.$$
\end{proof}

We recall that $\zeta_1\geq \zeta_2\geq \cdots \geq \zeta_k$ and $m=\zeta_k/2$. For any $(\lambda_1, \dots , \lambda_k)\in \Pi_{i=0}^k\Lambda(V_k)$ we define 
\begin{equation*}
    \begin{split}
        \Lambda^+(\lambda_1, \cdots, \lambda_k) = \{(\mu_1, \cdots, \mu_k) \in \Pi_{i =1}^k \Lambda(V_i) : \, \lambda_im -\mu_im + \mu_i\zeta_i \geq 0, \, \text{ for all } 1 \leq i \leq k \}.
    \end{split}
\end{equation*}

\begin{claim}\label{claim:nonempty} 
Let $(\lambda_1, \dots , \lambda_k) \in \Lambda_v$. For any $\mathbf{x} \in \R^{\Sigma_{i =1}^k(n_i -1)}$ such that $\mathbf{x}_i \neq 0$ for all $1\leq i \leq k$, we have
$$\Lambda_{u(\mathbf{x})v_{\lambda_1 \otimes  \cdots \otimes v_{\lambda_k}}} \cap \Lambda^+(\lambda_1, \cdots, \lambda_k)  \neq \emptyset.$$
\end{claim}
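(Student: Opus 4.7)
The plan is to reduce the claim to a component-wise statement using the tensor-product structure of the $G$-action, and then resolve each component by a case analysis on the sign of $\lambda_i$. The key relations making everything fit are $m=\zeta_k/2$ and the ordering $\zeta_i\geq \zeta_k$, which together give $\zeta_i-m\geq m>0$ for all $i$.

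Since $u(\mathbf{x})=(u_1(\mathbf{x}_1),\ldots,u_k(\mathbf{x}_k))$ acts component-wise on $V_1\otimes\cdots\otimes V_k$,
\[
u(\mathbf{x})(v_{\lambda_1}\otimes\cdots\otimes v_{\lambda_k})=u_1(\mathbf{x}_1)v_{\lambda_1}\otimes\cdots\otimes u_k(\mathbf{x}_k)v_{\lambda_k},
\]
and the joint $(H_1,\ldots,H_k)$-weight decomposition of the tensor product is the tensor product of the factor-wise weight decompositions. Consequently, the set of weights appearing in $u(\mathbf{x})(v_{\lambda_1}\otimes\cdots\otimes v_{\lambda_k})$ is the Cartesian product $\prod_i\Lambda_{u_i(\mathbf{x}_i)v_{\lambda_i}}$. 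Thus it suffices to produce, for each $i$ separately, a $\mu_i\in\Lambda_{u_i(\mathbf{x}_i)v_{\lambda_i}}$ satisfying $m\lambda_i+(\zeta_i-m)\mu_i\geq 0$, which, because $\zeta_i-m>0$, is equivalent to $\mu_i\geq -m\lambda_i/(\zeta_i-m)$.

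If $\lambda_i\geq 0$, I take any $\mu_i\in\Lambda_{u_i(\mathbf{x}_i)v_{\lambda_i}}$. By Claim \ref{claim:positive} applied with $\mathbf{x}_i$ in place of $\mathbf{y}_i(t)$, $\mu_i\geq \lambda_i\geq 0\geq -m\lambda_i/(\zeta_i-m)$, so the bound is automatic. If $\lambda_i<0$, I take $\mu_i=\lambda^{\max}(u_i(\mathbf{x}_i)v_{\lambda_i})$ with respect to $H_i$. Since $\mathbf{x}_i\neq 0$, the same $SL_2$-triple construction as in the proof of Claim \ref{claim:positive} (with $\mathbf{x}_i$ replacing $\mathbf{y}_i(t)$) yields an $\mathfrak{sl}_2$-subalgebra $(\mathcal{X},\mathcal{Y},2H_i)$ of $\mathfrak{g}_i$ with $u_i(\mathbf{x}_i)=\exp(\mathcal{X})$ the corresponding unipotent. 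In the induced representation of this $SL_2$-subgroup, $v_{\lambda_i}$ has $2H_i$-weight $2\lambda_i$, so Lemma \ref{lemma: basic} gives $\lambda^{\max}_{2H_i}(u_i(\mathbf{x}_i)v_{\lambda_i})\geq -2\lambda_i$, i.e.\ $\mu_i\geq -\lambda_i=|\lambda_i|$. On the other hand, $-m\lambda_i/(\zeta_i-m)=m|\lambda_i|/(\zeta_i-m)\leq|\lambda_i|$ because $\zeta_i-m\geq m$. Combining these, $\mu_i\geq|\lambda_i|\geq -m\lambda_i/(\zeta_i-m)$, as required.

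The main obstacle is purely bookkeeping: keeping careful track of the normalization between $H_i$-weights and $\mathcal{H}=2H_i$-weights when invoking Lemma \ref{lemma: basic}, and verifying that the two inequalities $\mu_i\geq \lambda_i$ (from Claim \ref{claim:positive}) and $\mu_i\geq -\lambda_i$ (from Lemma \ref{lemma: basic}) are jointly strong enough to cover both signs of $\lambda_i$. No genuine analytic difficulty arises; the entire content of the claim is a weight-theoretic consequence of the choice $m=\zeta_k/2$ together with the ordering of the $\zeta_i$'s.
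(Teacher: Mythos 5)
Your proposal is correct and takes essentially the same approach as the paper: reduce to a component-wise inequality using the tensor-product structure, pick $\mu_i=\lambda_i$ (or any $\mu_i\geq\lambda_i$) when $\lambda_i\geq 0$, and pick $\mu_i=\lambda^{\max}(u_i(\mathbf{x}_i)v_{\lambda_i})\geq-\lambda_i$ via Lemma \ref{lemma: basic} when $\lambda_i<0$, then verify the inequality using $m=\zeta_k/2$ and $\zeta_i\geq\zeta_k$.
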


\begin{proof}[Proof of Claim \ref{claim:nonempty}]
Let $\mathbf{x} \in \R^{\Sigma_{i =1}^k(n_i -1)}$ such that $\mathbf{x}_i \neq 0$ for all $1\leq i \leq k$.
For each $1 \leq i \leq k$, as in claim \ref{claim:positive}, 
$$\begin{pmatrix}
    0 & \mathbf{x}_i^T & 0 \\
    & 0 & \mathbf{x}_i \\
     & & 0
\end{pmatrix}, 
\begin{pmatrix}
0 & & \\
2\mathbf{x}_i^{-1} & 0 & \\
0 & (2\mathbf{x}_i^{-1})^T & 0 
\end{pmatrix} \text{ and } 
 2H_i$$ form a $SL_2$-triple in $\mathfrak{g}_i$. 
 
 If $\lambda_i \geq 0$, choose $\mu_i = \lambda_i$. Then $\lambda_im -\mu_im + \mu_i\zeta_i = \mu_i\zeta_i \geq 0$. 
 
 Now suppose that $\lambda_i<0$. Pick $\mu_i$ such that $\mu_i = \lambda^{\text{max}}(u_i(\mathbf{x}_i)v_{\lambda_i})$. 
 Then, by lemma \ref{lemma: basic}, \begin{equation*}
      \mu_i = \lambda^{\text{max}}(u_i(\mathbf{x}_i)v_{\lambda_i}) \geq -\lambda^{\text{max}}(v_{\lambda_i}) = -\lambda_i>0.
 \end{equation*} 
This implies $\lambda_i \geq -\mu_i$ and $\mu_i>0$.
 Then, 
\begin{equation*}
    \begin{split}
         \lambda_im -\mu_im + \mu_i\zeta_i & \geq \big(-\mu_im\big)-\mu_im +\mu_i\zeta_i   \\
        & = \mu_i(-\zeta_k + \zeta_i)\text{, because } m = \zeta_k/2\\
        & \geq 0\text{, as }\zeta_i\geq \zeta_k.
    \end{split}
\end{equation*}
Therefore, there exists $(\mu_1, \cdots, \mu_k) \in \Lambda_{u(\mathbf{x})v_{\lambda_1}\otimes \cdots \otimes v_{\lambda_k}}$ such that for all $1 \leq i \leq k$, 
\begin{equation*}
    \lambda_im -\mu_im + \mu_i\zeta_i \geq 0.
\end{equation*}
\end{proof}

\begin{claim}\label{claim: Lambda plus bound}
Let $B$ be a compact subset of $\R^{\Sigma_{i =1}^k(n_i-1)}$ such that for every $\mathbf{x} \in B$, $\mathbf{x}_i \neq 0$ for all $1 \leq i \leq k$. There exists some $D_1>0$ such that for any $v \in V$, $\mathbf{x} \in B$ and $(\lambda_1, \cdots, \lambda_k) \in \Pi_{i=0}^k\Lambda(V_k)$, the following holds: 
    \begin{equation}\label{equation: vector lower bound}
        \norm{\big[u(\mathbf{x})[v]_{(\lambda_1, \cdots, \lambda_k)}\big]_{\Lambda^+(\lambda_1, \cdots, \lambda_k)}} \geq D_1\norm{[v]_{(\lambda_1, \cdots, \lambda_k)}}.
    \end{equation}
\end{claim}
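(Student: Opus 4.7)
The central observation is that the defining condition of $\Lambda^+(\lambda_1,\ldots,\lambda_k)$ decouples into per-component inequalities, so
\[
\Lambda^+(\lambda_1,\ldots,\lambda_k)=\Lambda_1^+(\lambda_1)\times\cdots\times\Lambda_k^+(\lambda_k),\qquad
\Lambda_i^+(\lambda_i):=\{\mu_i\in\Lambda(V_i):(\lambda_i-\mu_i)m+\mu_i\zeta_i\ge 0\}.
\]
Since $[v]_{(\lambda_1,\ldots,\lambda_k)}=v_{\lambda_1}\otimes\cdots\otimes v_{\lambda_k}$ is a pure tensor of $H_i$-eigenvectors and $u(\mathbf{x})=(u_1(\mathbf{x}_1),\ldots,u_k(\mathbf{x}_k))$ acts slot-by-slot, the joint weight-space projection respects the tensor structure, giving
\[
\bigl[u(\mathbf{x})[v]_{(\lambda_1,\ldots,\lambda_k)}\bigr]_{\Lambda^+(\lambda_1,\ldots,\lambda_k)}
=\bigotimes_{i=1}^k\bigl[u_i(\mathbf{x}_i)\,v_{\lambda_i}\bigr]_{\Lambda_i^+(\lambda_i)}.
\]
Taking sup norms in $H_i$-eigenbases makes the norm multiplicative on pure tensors, so \eqref{equation: vector lower bound} reduces to producing, for each $i$, a constant $D_1^{(i)}>0$ such that
\[
\norm{[u_i(\mathbf{x}_i)\,v_{\lambda_i}]_{\Lambda_i^+(\lambda_i)}}\ge D_1^{(i)}\,\norm{v_{\lambda_i}}
\]
uniformly in $\lambda_i\in\Lambda(V_i)$, in nonzero $v_{\lambda_i}\in V_{i,\lambda_i}$, and in $\mathbf{x}_i$ lying in the image $B_i\subset\R^{n_i-1}$ of $B$ under the $i$th coordinate projection. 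One then sets $D_1:=\prod_{i=1}^k D_1^{(i)}$.

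To prove the per-factor bound I fix $i$ and $\lambda_i\in\Lambda(V_i)$ and consider the map
\[
\Phi:(\mathbf{x}_i,v)\longmapsto[u_i(\mathbf{x}_i)\,v]_{\Lambda_i^+(\lambda_i)},\qquad B_i\times V_{i,\lambda_i}\to V_i,
\]
which is polynomial in $\mathbf{x}_i$, linear in $v$, and therefore jointly continuous. Given $\mathbf{x}_i\in B_i$ (so $\mathbf{x}_i\ne 0$) and nonzero $v\in V_{i,\lambda_i}$, I extend $\mathbf{x}_i$ to a vector $\mathbf{x}\in\R^{\Sigma_{j=1}^k(n_j-1)}$ with all other coordinates nonzero, choose arbitrary nonzero $H_j$-eigenvectors $v_{\lambda_j}\in V_{j,\lambda_j}$ for $j\ne i$, and set $v_{\lambda_i}:=v$. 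Claim~\ref{claim:nonempty} then furnishes a tuple $(\mu_1,\ldots,\mu_k)\in\Lambda^+(\lambda_1,\ldots,\lambda_k)$ for which $\bigotimes_j[u_j(\mathbf{x}_j)v_{\lambda_j}]_{\mu_j}\ne 0$; since a pure tensor vanishes iff some factor vanishes, this forces $[u_i(\mathbf{x}_i)v]_{\mu_i}\ne 0$ with $\mu_i\in\Lambda_i^+(\lambda_i)$, and hence $\Phi(\mathbf{x}_i,v)\ne 0$. Thus $\Phi$ has no zeros on $B_i\times(V_{i,\lambda_i}\setminus\{0\})$.

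The ratio $\norm{\Phi(\mathbf{x}_i,v)}/\norm{v}$ is zero-homogeneous in $v$, so I may restrict $v$ to the unit sphere $S(V_{i,\lambda_i})$, which is compact. The set $B_i$ is compact as the continuous image of $B$, and it avoids the origin by the standing hypothesis that every $\mathbf{x}\in B$ has $\mathbf{x}_i\ne 0$. On the compact product $B_i\times S(V_{i,\lambda_i})$ the continuous, strictly positive ratio attains a positive minimum $D_1^{(i,\lambda_i)}>0$, and since $\Lambda(V_i)$ is finite, $D_1^{(i)}:=\min_{\lambda_i\in\Lambda(V_i)}D_1^{(i,\lambda_i)}$ is positive. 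The only content-bearing step is the nonvanishing of $\Phi$ supplied by Claim~\ref{claim:nonempty}; the remainder is a standard compactness/homogeneity argument. The essential conceptual input, and the main reason a factor-by-factor analysis succeeds even though $V$ need not itself split as a tensor product, is the product structure of $\Lambda^+(\lambda_1,\ldots,\lambda_k)$ coming from the pointwise form of its defining inequalities.
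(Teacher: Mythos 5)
Your proof is correct and essentially the same as the paper's: the nonvanishing is supplied by Claim~\ref{claim:nonempty} and the uniform lower bound by compactness together with the finiteness of the weight set $\prod_i\Lambda(V_i)$. The only organizational difference is that you make explicit the factorization $\Lambda^+(\lambda_1,\ldots,\lambda_k)=\prod_i\Lambda_i^+(\lambda_i)$ and run the compactness argument slot-by-slot, whereas the paper applies one compactness argument directly on $B\times V^1_{(\lambda_1,\ldots,\lambda_k)}$ (the unit-norm pure tensors of the given weight); both rest on exactly the same ingredients, including the cross-norm property.
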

\begin{proof}[Proof of Claim \ref{claim: Lambda plus bound}] 
We can choose the sup-norm on $V$ to satisfy the cross norm property, namely that$$\norm{w_1 \otimes \cdots \otimes w_k} = \prod_{i=1}^k\norm{w_i}$$
for any $w_i \in V_i$, where $1 \leq i \leq k.$

 For $(\lambda_1, \cdots, \lambda_k) \in \Lambda(V_1) \times \cdots \times \Lambda(V_k)$, define $$V^1_{(\lambda_1, \cdots, \lambda_k)} = \{ w_1 \otimes \cdots \otimes w_k \in V : w_i \in V_i, \,  H_iw_i = \lambda_iw_i \text{ and } \norm{w_i} = 1 \text{, for all } 1 \leq i \leq k \}.$$ 
Note that a function $f: B \times  V^1_{(\lambda_1, \cdots, \lambda_k)} \to \R_{\geq 0}$ defined as $$f( \mathbf{x}, w_1 \otimes \cdots \otimes w_k) = \norm{[u(\mathbf{x})w_1 \otimes \cdots \otimes w_k]_{\Lambda^+(\lambda_1, \cdots, \lambda_k)}}, \quad \forall \mathbf{x} \in B, \, \forall w_1\otimes \cdots \otimes w_k \in V^1_{(\lambda_1, \cdots, \lambda_k)}$$ is continuous. 

Let $\mathbf{x} \in B$ and $w_1 \otimes \cdots \otimes w_k \in V^1_{(\lambda_1, \cdots, \lambda_k)}$. 
By claim \ref{claim:nonempty}, $\Lambda_{u(\mathbf{x})w_1\otimes \cdots \otimes w_k} \cap \Lambda^+_{(\lambda_1, \cdots, \lambda_k)} \neq \emptyset$. 
This implies $ [u(\mathbf{x})w_1 \otimes \cdots \otimes w_k]_{\Lambda^+(\lambda_1, \cdots, \lambda_k)} \neq 0$. 
Hence $f(\mathbf{x}, w_1 \otimes \cdots \otimes w_k)>0$. 
Since $B \times V^1_{(\lambda_1, \cdots, \lambda_k)}$ is compact, $$D(\lambda_1, \cdots, \lambda_k) := \inf\{ f(\mathbf{x}, w_1 \otimes \cdots \otimes w_k): \mathbf{x}\in B, \, w_1 \otimes \cdots \otimes w_k \in V^1_{(\lambda_1, \cdots, \lambda_k)}\}>0.$$ 
Because $\Pi_{i=1}^k\Lambda(V_i)$ is finite, $$D_1 = \min\{D(\lambda_1, \cdots, \lambda_k) : (\lambda_1, \cdots, \lambda_k) \in\Pi_{i=1}^k\Lambda(V_i)\}>0.$$ 
Therefore, for any $v \in V$, $\mathbf{x} \in B$ and $(\lambda_1, \cdots, \lambda_k) \in \Lambda(V_1) \times \cdots \times \Lambda(V_k)$, (\ref{equation: vector lower bound}) holds. 
\end{proof}

We chose $s_0$ such that $\varphi'_i(s_0) \neq 0$ for all $i$. Then, there exists $T_1>0$ such that for any $t\geq T_1$, $(\boldsymbol{\kappa}(t))_i = \varphi_i'(s_t)   \neq 0$ for all $ 1 \leq i \leq k$. Note that $\{\boldsymbol{\kappa}(t) = \varphi'(s_t) : t \geq T_1\} \cup \{\varphi'(s_0)\}$ is a compact subset of $\R^{\Sigma_{i=1}^k (n_i-1)}$. By claim \ref{claim: Lambda plus bound}, there exists some $D_1>0$ such that for any $v \in V$, $t \geq T_1$, and $(\lambda_1, \cdots, \lambda_k) \in \Pi_{i=0}^k\Lambda(V_k)$, we have
\begin{equation}
    \norm{\big[u(\boldsymbol{\kappa}(t))[v]_{(\lambda_1, \cdots, \lambda_k)}\big]_{\Lambda^+(\lambda_1, \cdots, \lambda_k)}} \geq D_1\norm{[v]_{(\lambda_1, \cdots, \lambda_k)}}.
\end{equation} 

Now, consider
\begin{equation}\label{equation: expansion}
    \begin{split}
            &[a(t)u(R_{s_t}(\eta\cdot e^{-mt}))v_{\lambda_1}\otimes \cdots \otimes v_{\lambda_k}]_{(\mu_1, \cdots, \mu_k)}\\
        & \quad = [a(t)\exp{((-mt + \log\eta)H_C)} \, u(\mathbf{y}(t)) \, \exp{((mt - \log\eta)H_C)} \, v_{\lambda_1}\otimes \cdots \otimes v_{\lambda_k}]_{(\mu_1, \cdots, \mu_k)} \text{, by \eqref{eq:yt}}\\
            & \quad = \exp{((mt-\log\eta) \Sigma_{i = 1}^k \lambda_i)}[a(t)\exp((-mt+\log\eta)H_C) u(\mathbf{y}(t)) v_{\lambda_1}\otimes \cdots \otimes v_{\lambda_k}]_{(\mu_1, \cdots, \mu_k)} \\
                & \quad = \exp{((mt-\log\eta) \Sigma_{i = 1}^k \lambda_i)}\exp(\Sigma_{i = 1}^k (\zeta_it -mt +\log\eta)\mu_i)[u(\mathbf{y}(t)) \, v_{\lambda_1}\otimes \cdots \otimes v_{\lambda_k}]_{(\mu_1, \cdots, \mu_k)} \\
                    & \quad = \eta^{\Sigma_{i=0}^k(\mu_i-\lambda_i)}\exp{(\Sigma_{i = 0}^k (\mu_i\zeta_i - \mu_im +\lambda_im)t)}[u(\mathbf{y}(t)) \, v_{\lambda_1}\otimes \cdots \otimes v_{\lambda_k}]_{(\mu_1, \cdots, \mu_k)}.
    \end{split}
\end{equation}

 Let $\rho: V \to \R$ be a coordinate linear functional; that is, it maps one of the basis elements of $V$ to $\pm 1$ and vanishes on other basis elements. Note that $\norm{v} \geq |\rho(v)|$ for any coordinate linear functional $\rho$.

By (\ref{equation: expansion}), for any $(\mu_1, \cdots , \mu_k) \in \Lambda_{u(\mathbf{y}(t))v_{\lambda_1}\otimes \cdots \otimes v_{\lambda_k}}$ and any $\eta \in I$, we have
\begin{align*}
    M_t  & \geq \norm{[a(t)u(R_{s_t}(h))v]_{(\mu_1, \cdots, \mu_k)}}&& \\
        & \geq |\rho \big( \sum_{(\lambda_1, \cdots, \lambda_k) \in \Lambda_v} [a(t)u(R_{s_t}(h))v_{\lambda_1}\otimes \cdots \otimes v_{\lambda_k}]_{(\mu_1, \cdots, \mu_k)} \big)|&& \\
            & =|\sum_{(\lambda_1, \cdots, \lambda_k) \in \Lambda_v} \eta^{\Sigma_{i=1}^k(\mu_i-\lambda_i)} \exp{ \big( \Sigma_{i = 1}^k  \big(\mu_i\zeta_i - \mu_im +\lambda_im \big) t \big) } \rho \big([u(\mathbf{y}(t)) \, v_{\lambda_1}\otimes \cdots \otimes v_{\lambda_k}]_{(\mu_1, \cdots, \mu_k)}\big)|,
\end{align*} 
 by claim \ref{claim:positive}, which is a polynomial in variable $\eta$ of degree $d$ where $$d = \max\{ \Sigma_{i=1}^k(\mu_i-\lambda_i) : (\lambda_1, \cdots, \lambda_k) \in \Lambda_v\}.$$
Now we fix $(\lambda_1, \cdots, \lambda_k) \in \Lambda_v$ such that $$\norm{v} = \norm{[v]_{(\lambda_1, \cdots, \lambda_k)} }= \norm{v_{\lambda_1}\otimes \cdots \otimes v_{\lambda_k}}.$$
By lemma \ref{lemma: polynomial bound}, we have
\begin{equation}\label{equation: polyn coeff}
 M_t \geq C_{d,I}\exp{ \big( \Sigma_{i = 1}^k  \big(\mu_i\zeta_i - \mu_im +\lambda_im \big) t \big) } |\rho \big([u(\mathbf{y}(t)) \, v_{\lambda_1}\otimes \cdots \otimes v_{\lambda_k}]_{(\mu_1, \cdots, \mu_k)}\big)|.   
\end{equation}

If $(\mu_1, \cdots, \mu_k) \in \Lambda^+(\lambda_1, \cdots, \lambda_k) $, then for $t \geq 0$,
$$M_t \geq C_{d,I}|\rho \big([u(\mathbf{y}(t)) \, v_{\lambda_1}\otimes \cdots \otimes v_{\lambda_k}]_{(\mu_1, \cdots, \mu_k)}\big)|$$ because $\Sigma_{i = 1}^k  \big(\mu_i\zeta_i - \mu_im +\lambda_im \big) \geq 0$, by the definition of $\Lambda^+(\lambda_1, \cdots, \lambda_k)$.

Note that $\rho \big([u(\mathbf{y}(t)) \, v_{\lambda_1}\otimes \cdots \otimes v_{\lambda_k}]_{(\mu_1, \cdots, \mu_k)}\big) = 0$ for all but a single $(\mu_1, \cdots, \mu_k) \in \Lambda^+(\lambda_1, \cdots, \lambda_k)$. 
This implies \begin{equation}\label{equation: lin func}
    M_t \geq C_{d,I}|\rho \big([u(\mathbf{y}(t)) \, v_{\lambda_1}\otimes \cdots \otimes v_{\lambda_k}]_{\Lambda^+(\lambda_1, \cdots, \lambda_k)}\big)|.
\end{equation}

Note that 
\begin{equation}\label{equation: operator norm bound}
    \begin{split}
    &| \rho \Big([\big(u(\mathbf{y}(t))-u(\boldsymbol{\kappa}(t)) \big) v_{\lambda_1}\otimes \cdots \otimes v_{\lambda_k}]_{\Lambda^+(\lambda_1, \cdots, \lambda_k)} \Big) | \\
    &\qquad \leq \norm{[\big(u(\mathbf{y}(t))-u(\boldsymbol{\kappa}(t)) \big) v_{\lambda_1}\otimes \cdots \otimes v_{\lambda_k}]_{\Lambda^+(\lambda_1, \cdots, \lambda_k)}} \\
    & \qquad \leq \norm{\big(u(\mathbf{y}(t))-u(\boldsymbol{\kappa}(t)) \big) v_{\lambda_1}\otimes \cdots \otimes v_{\lambda_k} } \\
    & \qquad \leq \norm{u(\mathbf{y}(t))-u(\boldsymbol{\kappa}(t))}_{op}\cdot \norm{v_{\lambda_1} \otimes \cdots \otimes v_{\lambda_k}}
    \end{split}
\end{equation}
where the operator norm $\norm{\cdot }_{op}$ on $G$ is with respect to the sup-norm on $V$. 

Since $\mathbf{y}(t) - \boldsymbol{\kappa}(t) \to 0$ as $t \to \infty$, there exists some $T_2>0$ such that for $t \geq T_2$,  \begin{equation}\label{equation: operator bound}
    \norm{u(\mathbf{y}(t))-u(\boldsymbol{\kappa}(t)}_{op} \leq \frac{D_1}{2}
\end{equation}

Now we choose a linear functional $\rho$ satisfying \begin{equation}\label{eqaution: define linear func}
    |\rho \big( [u(\boldsymbol{\kappa}(t))v_{\lambda_1}\otimes \cdots \otimes v_{\lambda_k}]_{\Lambda^+(\lambda_1, \cdots, \lambda_k)} \big)| = \norm{[u(\boldsymbol{\kappa}(t))v_{\lambda_1}\otimes \cdots \otimes v_{\lambda_k}]_{\Lambda^+(\lambda_1, \cdots, \lambda_k)}}.
\end{equation}

Let $T = \max\{T_1, T_2\}$.
Therefore, for any $t \geq T$, (\ref{equation: lin func}) implies
\begin{equation*}
    \begin{split}
        M_t
        & \geq C_{d,I} \big(|\rho \big( [u(\boldsymbol{\kappa}(t))v_{\lambda_1}\otimes \cdots \otimes v_{\lambda_k}]_{\Lambda^+(\lambda_1, \cdots, \lambda_k)} \big)|-|\rho\big([\big(u(\mathbf{y}(t))-u(\boldsymbol{\kappa}(t))\big)v_{\lambda
        _1}\otimes\cdots v_{\lambda_k}]_{\Lambda^+(\lambda_1, \cdots, \lambda_k)}\big)| \Big)\\
        & \geq C_{d,I}\big(|\rho \big( [u(\boldsymbol{\kappa}(t))v_{\lambda_1}\otimes \cdots \otimes v_{\lambda_k}]_{\Lambda^+(\lambda_1, \cdots, \lambda_k)} \big)|- (D_1/2) \norm{v_{\lambda_1}\otimes \cdots \otimes v_{\lambda_k}}\big) \text{, by (\ref{equation: operator norm bound}) and (\ref{equation: operator bound})}\\
        & \geq C_{d,I}(D_1/2) \norm{v_{\lambda_1}\otimes \cdots \otimes v_{\lambda_k}} \text{, by (\ref{eqaution: define linear func}) and claim \ref{claim: Lambda plus bound}} \\
        & \geq D_2\norm{v} \text{, letting } D_2 = C_{d,I}(D_1/2).
    \end{split}
\end{equation*}
This completes the proof. \qed

\section{Nondivergence of limit measures and unipotent invariance }\label{Sec: nondiv and unip inv}

\subsection{Nondivergence of $\mu_{s_0,t}$}
Before stating the nondivergence criterion, we need the following definition. 

\begin{defn}
    Let $\mathfrak{L}$ be a Lie algebra of $L$ and let $d \in \N$. We denote $\mathcal{P}_d(L)$ the set of continuous maps $p: \R \to L$ such that for all $a, c \in \R$ and all $X \in \mathfrak{L}$, the map $$s \in \R \mapsto \Ad(p(a\cdot s + c))(X) \in \mathfrak{L}$$ is a polynomial of degree at most $d$ in each coordinate of $\mathfrak{L}$.  
\end{defn}

The following theorem combines and adapts the results from \cite{Sha96} for our purposes. 

\begin{thm}[cf.\cite{Sha96}, Theorem 2.1 and 2.2]  \label{thm: nondivergence criterion} 
    Let $M$ be the smallest closed normal subgroup of $L$ such that $L/M$ is a semisimple group with trivial center that does not contain any nontrivial compact normal subgroups. Let $\overline{L} = L/M$. Define the quotient homomorphism $q: L \to \overline{L}$. Then there exists finitely many closed subgroups $W_1, \cdots , W_r$ of $L$ such that for each $1 \leq j \leq r$, $W_j = q^{-1}(U_j)$ for some unipotent radical $U_j$ of a maximal parabolic subgroup of $\overline{L}$, $W_j\Gamma/\Gamma$ is compact in $L/\Gamma$ and the following holds: Given $d \in \N$ and $\epsilon, \alpha >0,$ there exists a compact set $K \subset L/\Gamma$ such that for any polynomial map $p \in \mathcal{P}_d(L)$ and any bounded open interval $J \subset \R$, one of the following holds:
    \begin{enumerate}
        \item There exist $\gamma \in \Gamma$ and $1 \leq j \leq r$ such that $$\sup_{s \in J} \norm{p(s)\gamma p_{W_j}} < \alpha$$. 
        \item $\big(1/\nu(J)\big)\nu ( \{s \in J: p(s)\Gamma \in K\}) \geq 1 - \epsilon$.
    \end{enumerate}
\end{thm}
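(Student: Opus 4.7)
The plan is to deduce this theorem from Theorems~2.1 and 2.2 of \cite{Sha96} by passing through the projection $q\colon L\to \overline{L}=L/M$. Since $\overline{L}$ is semisimple with trivial center and no compact factors by the very definition of $M$, Shah's results apply directly to $\overline{L}/\overline{\Gamma}$, where $\overline{\Gamma}:=q(\Gamma)$. First I would apply Shah's Theorem~2.2 to $\overline{L}/\overline{\Gamma}$ to produce a finite family of unipotent radicals $U_1,\ldots,U_r$ of maximal parabolic subgroups of $\overline{L}$ with $U_j\overline{\Gamma}/\overline{\Gamma}$ compact, and define $W_j:=q^{-1}(U_j)$. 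That $W_j\Gamma/\Gamma$ is compact follows by viewing the natural map $L/\Gamma\to \overline{L}/\overline{\Gamma}$ as a fiber bundle with fiber $M/(M\cap\Gamma)$: the base piece $U_j\overline{\Gamma}/\overline{\Gamma}$ is compact by Shah, and $M/(M\cap\Gamma)$ is compact by standard structure theorems for lattices in Lie groups whose amenable radical has cocompact intersection with the lattice.

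Next I would apply Shah's Theorem~2.1 to the induced polynomial trajectory $\bar{p}:=q\circ p\in\mathcal{P}_d(\overline{L})$. Given $d$, $\epsilon$, $\alpha$, it furnishes a compact $\overline{K}\subset \overline{L}/\overline{\Gamma}$ such that for every bounded interval $J$, either $(1/\nu(J))\nu(\{s\in J:\bar{p}(s)\overline{\Gamma}\in\overline{K}\})\geq 1-\epsilon$, or there exist $\bar{\gamma}\in\overline{\Gamma}$ and some $j$ with $\sup_{s\in J}\norm{\bar{p}(s)\bar{\gamma}\, p_{U_j}}<\alpha'$ for a constant $\alpha'=\alpha'(\alpha)$ chosen below. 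Setting $K:=q^{-1}(\overline{K})$, which is compact in $L/\Gamma$ because the fibers of $L/\Gamma\to\overline{L}/\overline{\Gamma}$ are compact, the measure-theoretic alternative transfers verbatim. For the norm alternative, I would lift $\bar{\gamma}$ to some $\gamma\in\Gamma$ (possible since $q(\Gamma)=\overline{\Gamma}$) and exploit the factorization $p_{W_j}=p_M\wedge \tilde{p}_{U_j}$ inside $\bigwedge^{\dim W_j}\mathrm{Lie}(L)$: here $p_M\in\bigwedge^{\dim M}\mathrm{Lie}(M)$ is $M$-invariant under $\Ad_L$, and $\tilde{p}_{U_j}$ projects to a nonzero multiple of $p_{U_j}$ under $q_\ast$. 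Thus the $\Ad_L$-action on $p_{W_j}$ is equivariantly equivalent to the $\Ad_{\overline{L}}$-action on $p_{U_j}$, up to a uniform multiplicative constant $C$.

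The main obstacle I expect is this last piece of bookkeeping. One has to choose compatible sup-norms on the exterior powers of $\mathrm{Lie}(L)$ and of $\mathrm{Lie}(\overline{L})$, verify that the constant $C$ in the comparison $\norm{p(s)\gamma\, p_{W_j}}\leq C\,\norm{\bar{p}(s)\bar{\gamma}\, p_{U_j}}$ can be taken uniform over the finitely many $j$, and then set $\alpha':=\alpha/C$ so that the output of Shah's Theorem~2.1 yields the desired bound with threshold $\alpha$. A subsidiary point, also standard, is to confirm that $\overline{\Gamma}=q(\Gamma)$ really is a lattice in $\overline{L}$; this follows from $\Gamma\cap M$ being a lattice in $M$, which in turn is equivalent to the compactness of the fiber of $L/\Gamma\to \overline{L}/\overline{\Gamma}$ used above. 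Once these compatibilities are nailed down, the two alternatives of the present theorem are precisely the pullback through $q$ of the two alternatives of Shah's Theorem~2.1 applied to $\bar{p}$ in $\overline{L}/\overline{\Gamma}$.
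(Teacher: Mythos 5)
The paper does not actually prove this theorem; it quotes it. Theorem~\ref{thm: nondivergence criterion} is a mild restatement and combination of Theorems~2.1 and 2.2 of \cite{Sha96}, which are already formulated for a \emph{general} Lie group $L$ with lattice $\Gamma$ (not merely a semisimple one), with the passage to the semisimple quotient $\overline{L}=L/M$ and the subgroups $W_j=q^{-1}(U_j)$ built into their hypotheses and conclusions. In other words, the ``adaptation'' referred to in the paper is one of packaging, not of proof, and no reduction from a semisimple-only statement is required.

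Your proposal instead derives the statement for general $L$ from a hypothetical version of Shah's theorems that would only apply to the semisimple group $\overline{L}$. As such it is a genuinely different route, and it is not what the paper (or Shah) does; Shah's proof works directly with $L$ via the Dani--Margulis nondivergence machinery. Your sketch is broadly sound as a reduction argument, but two points deserve more care. First, the assertion that the constant $C$ in $\norm{p(s)\gamma\, p_{W_j}}\le C\,\norm{\bar p(s)\bar\gamma\, p_{U_j}}$ is uniform hinges on the fact that $\bigl|\det(\Ad_L(g)|_{\mathrm{Lie}(M)})\bigr|=1$ for every $g\in L$; this is true, but it is a consequence of the unimodularity of $L$, $M$, and $L/M$ (all of which follow from the lattice hypotheses), and should be stated explicitly rather than folded into ``bookkeeping.'' Second, the compactness of $M/(M\cap\Gamma)$ is a nontrivial structure-theoretic input (Auslander--Wang--Mostow type results on the solvable/compact part of a Lie group with a lattice), not merely ``standard'' in the sense of being routine. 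Once these are made explicit the reduction is correct, but it reconstructs work that \cite{Sha96} already supplies in the generality needed, so the shortest accurate proof of Theorem~\ref{thm: nondivergence criterion} is simply to cite Shah's Theorems~2.1 and 2.2 directly, as the paper does.
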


Consider any sequence $\{t_n\}_{n \in \N}$ in $\R$ such that $t_n \to \infty$ as $n \to \infty$. Then, $s_{t_n} \to s_0$ as $n \to \infty$. We shall write $s_n$ instead of $s_{t_n}$. By Theorem \ref{thm: nondivergence criterion}, there exist $W_1, \cdots, W_r$ satisfying the condition given in the theorem. By Proposition \ref{prop: basic}, there exists $N_0 \in \N$ such that for any $\gamma \in \Gamma$, $1 \leq j \leq r$, and $n \geq N_0$,
\begin{equation*}
    \sup_{\eta \in I} \norm{a(t_n)u(R_{s_n}(\eta e^{-mt_n}))u(\varphi(s_n))\gamma p_{W_j}} \geq D_2\norm{u(\varphi(s_n))\gamma p_{W_j}}.
\end{equation*}
Since $\Gamma p_{W_j}$ is closed and discrete (\cite{DM93}, Theorem 3.4), $\norm{\gamma p_{W_j}}$ is uniformly bounded below for all $\gamma \in \Gamma$ and $1 \leq j \leq r$. Therefore,     
\begin{equation}\label{equation: 5. bdd below}
    \sup_{\eta \in I} \norm{a(t_n)u(R_{s_n}(\eta e^{-mt_n}))u(\varphi(s_n))\gamma p_{W_j}} \geq D_3
\end{equation} 
for some positive constant $D_3$.

On the other hand, note that for any $n$, the map $\eta\mapsto a(t_n)u(R_{s_n}(\eta e^{-mt_n}))u(\varphi(s_n))$ is a polynomial in $L$ in variable $\eta$ of bounded degree. (\ref{equation: 5. bdd below}) implies that for all small enough $\alpha$ and $n \geq N_0$, the condition (1) of Theorem \ref{thm: nondivergence criterion} does not hold. This implies that for any $\epsilon>0$, there exists a compact set $K \subset L/\Gamma$ such that for any $n \geq N_0$,
\begin{equation}\label{3. equation: measure nondivergent}
    \nu(\{s \in I : a(t_n)u(R_{s_n}(\eta e^{-mt_n}))u(\varphi(s_n))x_0 \in K\}) \geq 1-\epsilon.
\end{equation}
Therefore $\mu_{s_0,t_n}(K)\geq 1-\epsilon$, where $\mu_{s_0,t}$ is defined as in \eqref{eq:mus0t}. This proves the following non-divergence theorem. 

\begin{thm} \label{thm:non-divergence}
The set $\{\mu_{s_0,t}:t\geq 0\}$ is contained in a compact subset of the space of probability measures on $L/\Gamma$ with respect to the weak-$\ast$ topology. 
\end{thm}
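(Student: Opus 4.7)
The plan is to upgrade the sequential tightness estimate \eqref{3. equation: measure nondivergent} to genuine tightness of the whole family $\{\mu_{s_0,t}\}_{t\ge 0}$, and then invoke Prohorov's theorem. Since $L/\Gamma$ is a second countable locally compact Hausdorff space, relative compactness in the weak-$\ast$ topology on probability measures is equivalent to tightness: for every $\epsilon>0$, the existence of a compact $K_\epsilon\subset L/\Gamma$ with $\mu_{s_0,t}(K_\epsilon)\geq 1-\epsilon$ for all $t\geq 0$. Thus it suffices to produce such a $K_\epsilon$.

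First I would observe that the argument culminating in \eqref{3. equation: measure nondivergent} works uniformly in $t$, not merely along a fixed sequence. Proposition~\ref{prop: basic} provides constants $D_2$ and $T$ depending only on the representation $V_L$ and on the compact family $\{\varphi'(s_t):t\geq T_1\}\cup\{\varphi'(s_0)\}$, so for every $v\in V_L$ and every $t\geq T$ one has $\sup_{\eta\in I}\norm{a(t)u(R_{s_t}(\eta e^{-mt}))v}\geq D_2\norm{v}$. Applying this with $v=u(\varphi(s_t))\gamma p_{W_j}$ and using that $\Gamma p_{W_j}$ is discrete in $V_L$ (Dani--Margulis) together with the continuity of $t\mapsto u(\varphi(s_t))$ at $t=\infty$ (so left multiplication by $u(\varphi(s_t))$ distorts norms by a bounded factor), one obtains a single constant $D_3>0$ for which \eqref{equation: 5. bdd below} holds for every $\gamma\in\Gamma$, every $1\leq j\leq r$ and every $t\geq T$. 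Now choosing $\alpha<D_3$ in Theorem~\ref{thm: nondivergence criterion} (applied to the uniformly bounded-degree polynomial map $\eta\mapsto a(t)u(R_{s_t}(\eta e^{-mt}))u(\varphi(s_t))$ on $J=[0,1]$) excludes option~(1), so option~(2) furnishes a compact $K_\epsilon^{(1)}\subset L/\Gamma$ with $\mu_{s_0,t}(K_\epsilon^{(1)})\geq 1-\epsilon$ for all $t\geq T$.

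Second, I would handle the remaining bounded range $t\in[0,T]$: the joint map $(\eta,t)\mapsto a(t)u(R_{s_t}(\eta e^{-mt}))u(\varphi(s_t))x_0$ is continuous on the compact set $I\times[0,T]$ into $L/\Gamma$, so its image is contained in a compact set $K^{(2)}$, and every $\mu_{s_0,t}$ with $t\in[0,T]$ is supported in $K^{(2)}$. Setting $K_\epsilon=K_\epsilon^{(1)}\cup K^{(2)}$ yields $\mu_{s_0,t}(K_\epsilon)\geq 1-\epsilon$ for all $t\geq 0$, and Prohorov's theorem completes the argument.

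The main obstacle is the uniformity in $t$: one must be sure that the various constants ($D_2$, $D_3$, the threshold $T$, the choice of $\alpha$, and the list $W_1,\dots,W_r$) depend only on $\epsilon$, on $V_L$, and on data attached to $s_0$, rather than on any specific sequence $t_n\to\infty$. This is precisely the content of Proposition~\ref{prop: basic} (whose proof exploits compactness of the polynomial coefficients $\boldsymbol{\kappa}(t)$) combined with the Dani--Margulis uniform discreteness; without it, one would only obtain subsequential tightness and would then need a diagonal extraction to recover the full statement.
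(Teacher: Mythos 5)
Your argument is essentially the paper's: Proposition~\ref{prop: basic} with $v=u(\varphi(s_t))\gamma p_{W_j}$, uniform discreteness of $\Gamma p_{W_j}$, and Theorem~\ref{thm: nondivergence criterion} applied to the bounded-degree polynomial $\eta\mapsto a(t)u(R_{s_t}(\eta e^{-mt}))u(\varphi(s_t))$, with the constants $D_2,T,D_3$ indeed depending only on $V_L$ and the family $\{s_t\}$ rather than any particular sequence, so the resulting $K_\epsilon^{(1)}$ works uniformly for $t\geq T$. One small correction in your treatment of $t\in[0,T]$: the family $t\mapsto s_t$ is only assumed to satisfy $s_t\to s_0$ and need not be continuous, so the joint map $(\eta,t)\mapsto a(t)u(R_{s_t}(\eta e^{-mt}))u(\varphi(s_t))x_0$ need not be continuous on $I\times[0,T]$; however, since $s_t\in I$ compact and $\varphi^{(j)}$ is continuous, the coefficients of $R_{s_t}$ are bounded, and together with boundedness of $a(t)$ for $t\in[0,T]$ this shows the image is contained in a bounded subset of $L$, which suffices to place the supports of $\{\mu_{s_0,t}:t\in[0,T]\}$ inside a single compact subset of $L/\Gamma$.
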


\subsection{Unipotent invariance of the limit measure}
Now, fix a sequence $t_n \to \infty$. By Theorem~\ref{thm:non-divergence}, by passing to a subsequence, $\mu_{s_0, t_n}$ converges to $\mu_{s_0}$ for some $ \mu_{s_0} \in \M^1(L/\Gamma)$.
\begin{notation}
    Let $\epsilon>0$. For $A$ and $ B \in \R$, if $|A-B|< \epsilon$, denote $A \stackrel{\varepsilon}{\approx} B$.
    Fix a right $L$-invariant metric $d_L$ on $L$. For $x_1, x_2 \in X$ such that $x_2 = gx_1$ and $d_L(g, e) < \varepsilon$, we denote $x_1 \stackrel{\varepsilon}{\approx} x_2$. 
\end{notation}

For each $i$, define 
\begin{equation}
    \mathbf{w}_i = (\varphi'_1(s_0), \varphi_2'(s_0), \cdots, \varphi_i'(s_0), 0, \cdots,0) \in \R^{\Sigma_{i=1}^k(n_i-1)}.
\end{equation}
Let $k_1 \in \N$ be the smallest index $i \in \{1, 2, \cdots, k-1\}$ such that $\zeta_{i}>\zeta_{i+1}$. If no such index exists (i.e., if $\zeta_1 = \zeta_2 = \cdots = \zeta_k$), we set $k_1 = k$.
\begin{prop}\label{prop: unip inv}
  $\mu_{s_0}$ is $u(r\mathbf{w}_{k_1})$-invariant for all $r\in\R$. 
\end{prop}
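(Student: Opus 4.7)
The plan is to prove invariance via a change of variables in the parameter $\eta$ combined with the fact that each $N_i$, and hence $N$, is abelian. Set $\zeta^*:=\zeta_1=\cdots=\zeta_{k_1}$ and $\delta_n:=re^{-(\zeta^*-m)t_n}$. Since $\zeta^*\ge\zeta_k>m=\zeta_k/2$, we have $\delta_n\to 0$. I will show that the substitution $\eta\mapsto\eta+\delta_n$ in the integral defining $\mu_{s_0,t_n}$ amounts, up to vanishing error, to left multiplication of the integrand by $u(r\mathbf{w}_{k_1})$.

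Since $f$ is bounded, the substitution introduces only an $O(|\delta_n|)$ boundary error. Using that $u$ is a homomorphism $(\R^{\sum_i(n_i-1)},+)\to N$, and writing $\Delta_n(\eta):=R_{s_n}((\eta+\delta_n)e^{-mt_n})-R_{s_n}(\eta e^{-mt_n})$, we obtain $u(R_{s_n}((\eta+\delta_n)e^{-mt_n}))=u(\Delta_n(\eta))\,u(R_{s_n}(\eta e^{-mt_n}))$. Conjugating by $a(t_n)$ and using the identity $a_i(t)u_i(\mathbf{x}_i)a_i(-t)=u_i(e^{\zeta_it}\mathbf{x}_i)$, the substituted integrand becomes
$$f\bigl(u(a(t_n)\cdot\Delta_n(\eta))\cdot a(t_n)u(R_{s_n}(\eta e^{-mt_n}))u(\varphi(s_n))x_0\bigr),$$
where $a(t_n)\cdot\Delta_n(\eta)$ denotes componentwise scaling of the $i$-th block of $\Delta_n(\eta)$ by $e^{\zeta_it_n}$.

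The heart of the argument is to show $a(t_n)\cdot\Delta_n(\eta)\to r\mathbf{w}_{k_1}$ uniformly in $\eta\in[0,1]$. Expand $\Delta_n(\eta)=\sum_{j=1}^{l-1}\varphi^{(j)}(s_n)[(\eta+\delta_n)^j-\eta^j]e^{-jmt_n}$. The $j=1$ piece is $\varphi'(s_n)\delta_ne^{-mt_n}$, whose $i$-th block after scaling equals $r\varphi_i'(s_n)e^{(\zeta_i-\zeta^*)t_n}$; by the definition of $k_1$ this limits to $r\varphi_i'(s_0)$ for $i\le k_1$ (where $\zeta_i=\zeta^*$) and to $0$ for $i>k_1$ (where $\zeta_i<\zeta^*$), recovering $r\mathbf{w}_{k_1}$ exactly. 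For $j\ge 2$ expand $(\eta+\delta_n)^j-\eta^j=\sum_{q=1}^{j}\binom{j}{q}\eta^{j-q}\delta_n^q$; substituting $\delta_n$ shows each $(j,q)$-term in the $i$-th block scales like $r^q\eta^{j-q}\varphi_i^{(j)}(s_n)e^{((\zeta_i-q\zeta^*)+(q-j)m)t_n}$, and one checks the exponent is strictly negative in every case (if $q<j$ then $(q-j)m<0$ while $\zeta_i\le\zeta^*\le q\zeta^*$; if $q=j\ge 2$ then $\zeta_i-q\zeta^*\le(1-q)\zeta^*<0$). Thus all $j\ge 2$ terms vanish uniformly, and continuity of $\varphi_i'$ at $s_0$ (which holds because $s_0\notin E_3$ and $\varphi\in C^l$) upgrades convergence to uniformity in $\eta$.

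Finally, given $\epsilon>0$, by Theorem~\ref{thm:non-divergence} choose a compact $K\subset L/\Gamma$ with $\mu_{s_0,t_n}(K)\ge 1-\epsilon$ for all large $n$. Uniform continuity of $f$ on a compact neighborhood of $u(r\mathbf{w}_{k_1})\cdot K$ shows that replacing $u(a(t_n)\cdot\Delta_n(\eta))$ by $u(r\mathbf{w}_{k_1})$ in the integrand alters the integral by at most $o(1)+O(\epsilon)$. Letting $n\to\infty$ and then $\epsilon\downarrow 0$ yields
$$\int f\,d\mu_{s_0}=\int f\bigl(u(r\mathbf{w}_{k_1})\cdot y\bigr)\,d\mu_{s_0}(y),$$
which is the claimed $u(r\mathbf{w}_{k_1})$-invariance. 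The main obstacle is the Taylor-term bookkeeping in the third paragraph, where the choice $m=\zeta_k/2$ is dictated precisely by the need to neutralize every non-leading Taylor term in every block simultaneously.
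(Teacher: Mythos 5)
Your proof is correct and takes essentially the same route as the paper: you shift the parameter by $\delta_n = re^{(m-\zeta_1)t_n}$ (the paper's $\tilde\eta = \eta + re^{(m-\zeta_1)t_n}$), exploit commutativity of $N$ and the conjugation identity $a(t)u(\mathbf{x})a(-t)=u(e^{\zeta_\bullet t}\mathbf{x})$, and check that the transferred factor converges to $u(r\mathbf{w}_{k_1})$ by the same exponent bookkeeping. The only cosmetic difference is that the paper pushes $u(r\mathbf{w}_{k_1})$ through $a(t_n)u(R(h))$ before changing variables (whereas you change variables first), and the paper finishes with uniform continuity of $f\in C_c(X)$ alone, without invoking the nondivergence theorem.
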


\noindent\textbf{Proof.}
It suffices to show that for any $r \in \R$ and any $f \in C_c(X)$, 
\begin{equation} \label{equation: 4 unip inv WTS}
    \int_X f\big(u(r\mathbf{w}_{k_1})y\big)d\mu_{s_0}(y) = \int_X f \,d\mu_{s_0}.
\end{equation}
From the definition of $\mu_{s_0}$, we have 

\begin{align}
    \int_X f\big(u(r\mathbf{w}_{k_1})y\big)d\mu_{s_0}(y) \\
    &= \lim_{n \to \infty}\int_I f \big(u(r\mathbf{w}_{k_1}) a(t_n)u\big(R_{s_n}(h)\big)u(\varphi(s_n))x_0\big)d\eta 
\end{align} where $h = \eta e^{-mt}$.

We shall write $R(h)$ instead of $R_{s_n}(h)$ to make our expressions concise.
For $1 \leq i \leq k$, let $R_{i}(h) = (R(h))_i \in \R^{n_i-1}$ denote the $i$-th component of $R(h)$, i.e., $R(h) = (R_1(h), \cdots, R_{k}(h))$. Each component can be expressed as $$R_{i}(h) = \varphi_i'(s_n)\cdot h + P_{i}(h)$$ where $P_i(h)$ is the sum of the higher-order terms, defined by 
$$P_{i}(h) = \sum_{j=2}^{l-1} \frac{\varphi_i^{(j)}(s_n)}{j}h^j.$$

We now analyze the expression
\begin{equation}\label{equation: mult unip}
        \pi_i\big(u(r\mathbf{w}_{k_1})a(t_n)u\big(R(h)\big)\big).        
\end{equation} for each $i$.
Suppose that $1 \leq i \leq k_1$. (\ref{equation: mult unip}) is equal to 
\begin{equation}\label{equation: 3. componentwise} 
    \begin{split}
        u_i(r\cdot\varphi_i'(s_0))a_i(t_n)u_i(R_{i}(h)) &= u_i\big(r\cdot\big(\varphi_i'(s_0)-\varphi_i'(s_n)\big)\big)a_i(t_n) u_i\big(re^{-\zeta_1 t_n}\varphi_i'(s_n) + R_{i}(h)\big)\\
        & = u_i\big(r\cdot\big(\varphi_i'(s_0)-\varphi_i'(s_n)\big)\big)a_i(t_n)u_i(\varphi'_i(s_n)\cdot(re^{-\zeta_1t_n} +h) + P_{i}(h)).
    \end{split}
\end{equation}
Note that $re^{-\zeta_1 t_n} +h = re^{-\zeta_1 t_n} +\eta e^{-mt_n} = e^{-mt_n}(\eta +re^{(m-\zeta_1)t_n})$, so let $\tilde{\eta} = \eta +re^{(m-\zeta_1)t_n}$ and let $\tilde{h} = \tilde{\eta}e^{-mt_n}$.
Then, we can express \eqref{equation: 3. componentwise} as
\begin{equation}\label{equation: R and P polyns}
\begin{split}
    u_i\big(r\cdot\big(\varphi_i'(s_0)-\varphi_i'(s_n)\big)\big)&a_i(t_n)u_i\big(\varphi_i'(s_n)\tilde{h} + P_{i}(h)\big) \\
    &= u_i\big(r\cdot\big(\varphi_i'(s_0)-\varphi_i'(s_n)\big)\big) a_i(t_n)u_i\big(R_{i}(\tilde{h)}+ (P_{i}(h)-P_{i}(\tilde{h}))\big)
\end{split}    
\end{equation}

Observe that    
\begin{equation*}
    P_i(h)-P_i(\tilde{h}) = \sum_{j = 2}^{l-1}\frac{\varphi^{(j)}_i(s_n)}{j!}\big(h^j-\tilde{h}^j\big)
\end{equation*}
and for $j \ge 2$
\begin{equation}\label{equation: difference between h}
    \begin{split}
        h^j-\tilde{h}^j &= \big(\tilde{\eta}-re^{(m-\zeta_1)t_n}\big)^j \cdot e^{-mjt_n} - \tilde{\eta}^je^{-mjt_n}\\
        &= O\big(e^{(m-\zeta_1)t_n}\cdot e^{-mjt_n}\big) \\
        &= O\big(e^{\big((1-j)m-\zeta_1 \big)t_n}\big).
    \end{split}
\end{equation}
Hence, 
\begin{equation*}
  P_i(h)-P_i(\tilde{h)} = O\big(e^{-(m+\zeta_1)t_n}\big)  
\end{equation*}
Then, (\ref{equation: R and P polyns}) is equal to \begin{equation*}\label{equation: result <k1}
    \begin{split}
         u_i\big(r\cdot\big(\varphi_i'(s_0)-&\varphi_i'(s_n)\big)\big)a_i(t_n)u_i\big(R_i(\tilde{h})+O(e^{-(m+\zeta_1)t_n})\big) \\
         &= u_i\big(r\cdot\big(\varphi_i'(s_0)-\varphi_i'(s_n)\big)\big) \Big(a_i(t_n)u_i\big(O(e^{-(m+\zeta_1)t_n})\big)a_i(t_n)^{-1} \Big) a_i(t_n)u_i\big(R_i(\tilde{h})\big) \\
        & =  u_i\big(r\cdot\big(\varphi_i'(s_0)-\varphi_i'(s_n)\big)\big) u_i\big(O(e^{-mt_n})\big)a_i(t_n)u_i\big(R_i(\tilde{h})\big).
    \end{split}
\end{equation*}
Therefore, for $1 \leq i \leq k_1$,  we have
\begin{equation}\label{equation: i<k1}
\begin{split}
    \pi_i\big(u(r\mathbf{w}_{k_1})a(t_n)u\big(R(h)\big)\big) &= \pi_i\Big(u\big(r\cdot\big(\varphi'_1(s_0) , \cdots  , \varphi_{k_1}'(s_0), 0, \cdots, 0\big)\big)a(t_n)u\big(R(h)\big)\Big) \\
     &=   u_i\big(r\cdot\big(\varphi_i'(s_0)-\varphi_i'(s_n)\big)\big) u_i\big(O(e^{-mt_n})\big)a_i(t_n)u_i\big(R_i(\tilde{h})\big)
\end{split}
\end{equation}

On the other hand, suppose that $i>k_1$.  (\ref{equation: mult unip}) is equal to
\begin{equation}\label{equation: >k1}
    \begin{split}
        a_i(t_n)u_i(R_i(h)) 
         = a_i(t_n)u_i\big(R_i(\tilde{h})+(R_i(h)-R_i(\tilde{h}))\big)
    \end{split}
\end{equation}
Since (\ref{equation: difference between h}) also holds for $j = 1$, 
\begin{equation*}
    R_i(h)-R_i(\tilde{h}) = \sum_{j = 1}^{l-1} \frac{\varphi_i^{(j)}(s_n)}{j!}(h^j-\tilde{h}^j) = O(e^{-\zeta_1 t_n}). 
\end{equation*}
This implies that (\ref{equation: >k1}) is equal to
\begin{equation*}\label{equation: result >k1}
    \begin{split}
        a_i(t_n)u_i\big(R_i(\tilde{h}) + O(e^{-\zeta_1 t_n})\big) & = a_i(t_n)u_i(O(e^{-\zeta_1t_n}))a_i(t_n)^{-1}a_i(t_n)u_i\big(R_i(\tilde{h})\big) \\
        & = u_i\big(O(e^{(\zeta_i-\zeta_1)t_n})\big)a_i(t_n)u_i\big(R_i(\tilde{h})\big).
    \end{split}
\end{equation*}
Therefore, for $i>k_1$, we have
\begin{equation}\label{equation: i>k1}
    \begin{split}
        \pi_i\big(u(r\mathbf{w}_{k_1})a(t_n)u\big(R(h)\big)\big)
        = u_i\big(O(e^{(\zeta_i-\zeta_1)t_n})\big)a_i(t_n)u_i\big(R_i(\tilde{h})\big)
    \end{split}
\end{equation}

Let $\alpha = \min\{m, \zeta_1-\zeta_{k_1+1}\}>0$. 
Combining \eqref{equation: i<k1} with \eqref{equation: i>k1}, now we have
\begin{equation}\label{equation: 4 calc result unip inv}
    \begin{split}
        u(r\mathbf{w}_{k_1})&a(t_n)u\big(R(h)\big)\\
        &= u(r(\varphi'_1(s_0)-\varphi'_1(s_t), \cdots, \varphi'_{k_1}(s_0)-\varphi'_{k_1}(s_t), 0, \cdots, 0)+ O(e^{-\alpha t_n}))a(t_n)u(R(\tilde{h})).
    \end{split}
\end{equation}

Fix $\varepsilon>0$ and $f \in C_c(X)$. Since $f$ is uniformly continuous, there exists $\delta>0$ such that for any $y$ and $z \in X$, if $y \stackrel{\delta}{\approx} z $, then $f(y) \stackrel{\varepsilon}{\approx}f(z)$. 
Note that for any large enough $n$, 
\begin{equation}\label{equation: 4. uniform conti}
    d_L\big(u\big(r(\varphi'_1(s_0)-\varphi'_1(s_n), \cdots, \varphi'_{k_1}(s_0)-\varphi'_{k_1}(s_n), 0, \cdots, 0)+ O(e^{-\alpha t_n})\big), e\big) < \delta.
\end{equation}

Then, for any large enough $n$, 
\begin{equation*}
    \begin{split}
       & \int_I f \big(u(r\mathbf{w}_{k_1})a(t_n)u\big(R(h)\big)u(\varphi(s_n))x_0\big)d\eta \\
        & \ = \int_{I + re^{(m-\zeta_1)t_n}} f\Big(u\big(r(\varphi'_1(s_0)-\varphi'_1(s_n), \cdots, \varphi'_{k_1}(s_0)-\varphi'_{k_1}(s_n), 0, \cdots, 0)+ O(e^{-\alpha t})\big)\cdot \\
        & \hspace{30em} a(t_n) u\big(R(\tilde{h})\big)u(\varphi(s_n))x_0\Big) d\tilde{\eta}\\
        & \ \stackrel{\varepsilon}{\approx} \int_{I + re^{(m-\zeta_1)t_n}} f\big(a(t_n)u\big(R(\tilde{h})\big)u(\varphi(s_n))x_0\big) d\tilde{\eta}, \text{by (\ref{equation: 4. uniform conti})}\\
        & \stackrel{2re^{(m-\zeta_1)t_n\norm{f}_\infty}}{\approx}\int_I f\big(a(t_n)u\big(R(\tilde{h})\big)u(\varphi(s_n))x_0\big)d\tilde{\eta}
    \end{split}
\end{equation*}

Therefore, since $m-\zeta_1<0$ and $\varepsilon$ was arbitrary, by letting $n\to \infty$, (\ref{equation: 4 unip inv WTS}) holds. \qed

\section{Equidistribution of Limit Measures}\label{Sec: Equidistribution} 
In this section, we will identify obstructions to equidistribution using linearlization technique. For a given $H \in \mathscr{H}$, we will analyze two cases: when the orbit $Gp_H$ is closed in $V_L$ and when it is not. 
\subsection{$Gp_H$ is closed.}

\newcommand{\cJ}{\mathcal{J}}

\begin{defn}
    Let $\mathcal{J}\subset \{1, 2, \cdots, k\}$ be a set of indices and let $m_{\mathcal{J}} \in \N$ satisfying $1 \leq m_{\mathcal{J}} \leq \min_{j \in \mathcal{J}} n_j$. We choose $\psi_j \in \mathrm{Aut}(\SO(m_{\mathcal{J}}, 1))$ for each $j \in \mathcal{J}$. Define $$S = \{\big(\psi_j(h)\big)_{j \in \mathcal{J}} \in \prod_{j \in \mathcal{J}} G_j: h \in \SO(m_\mathcal{J},1)\}.$$ We call $S$ a generalized diagonal embedding of $SO(m_\mathcal{J}, 1)$ into $\prod_{j \in \mathcal{J}} G_j$ and denote this by $\Delta_{\mathcal{J}}SO(m_\mathcal{J},1)$ or simply $\Delta SO(m_\mathcal{J}, 1)$. 
\end{defn}

For any Lie subgroup $H'$ of $L$, we define 
\begin{equation}
\begin{split}
        N_G^1(H') = \{g \in N_G(H'): \det(\Ad\, g|_{\text{Lie}(H')}) = 1  \}
\end{split}
\end{equation}

\begin{prop}\label{prop: 6.  u(x) is in circle}
    Let $H\in \mathscr{H}$ such that $G p_{H}$ is closed. 
    Suppose that 
    \[  S_{H} = \{\mathbf{x} \in \R^{ \Sigma_{i=1}^k (n_i-1)}: u(\mathbf{x}) p_{H} \in V^{0-}_L(A) \}
    \]is nonempty. 
        Then there exists $\xi_0 \in G$ and a reductive subgroup $F$ of $G$ containing $A$ such that the following conditions are satisfied.
    \begin{enumerate}
        \item $F = N_G^1\big(\xi_0H\xi_0^{-1}\big)$. In particular, if $G$ does not fix $p_H$, then $F$ is a proper reductive subgroup of $G$.
        \item $F$ is an almost direct product $S_0\cdot S_1 \cdots S_p$ for some $1 \leq p \leq k$, where $S_0$ is the largest compact normal subgroup of $F$, and for $1 \leq j \leq p$, each $S_{j}$ is of the form $\Delta_{\mathcal{J}_j}\mathrm{SO}(m_{\mathcal{J}_j},1)$ for some partition $ \mathscr{P} = \{\mathcal{J}_1, \mathcal{J}_2, \cdots, \mathcal{J}_p\}$ of $\{1, 2, \cdots, k\}$ such that for each $\mathcal{J}\in \mathscr{P}$, we have $\zeta_{j_1} = \zeta_{j_2}$ for all $j_1, j_2 \in \mathcal{J}$.
        \item \begin{equation} \label{equation: 4. in the sphere}
        S_{H} = \{\mathbf{x} \in \R^{\Sigma_{i=1}^k (n_i-1)}: \mathcal{I}\circ u(\mathbf{x}) \in \mathcal{I}(F\xi_0) \},
    \end{equation}
    which is an embedding of a product of subspheres or of subspaces of $\R^{n_i-1}$ into $\R^{ \Sigma_{i=1}^k (n_i-1)}.$
    \end{enumerate}

\end{prop}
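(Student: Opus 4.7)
The plan is to construct $\xi_0$ via a limit along the $A$-flow, define $F$ as the stabilizer of $\xi_0 p_H$ in $G$, and then read off all three conclusions from the resulting reductive structure of $F$.

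\emph{Construction of $\xi_0$ and $F$.} Because $S_H\neq\emptyset$, pick $x_\ast\in S_H$, so that $w_\ast:=\lim_{t\to\infty}a(t)u(x_\ast)p_H$ exists in $V_L$. Since $a(t)u(x_\ast)p_H\in Gp_H$ and $Gp_H$ is closed in $V_L$, we have $w_\ast\in Gp_H$, hence $w_\ast=\xi_0 p_H$ for some $\xi_0\in G$. Continuity of the $A$-action gives $a(s)w_\ast=\lim_{t\to\infty}a(s+t)u(x_\ast)p_H=w_\ast$ for all $s$, so $w_\ast$ is $A$-fixed, equivalently $\xi_0^{-1}A\xi_0\subset\mathrm{Stab}_G(p_H)=N_G^1(H)$ (the identification of the stabilizer uses $p_H\in\wedge^{\dim H}\mathrm{Lie}(H)$ and the $\Ad_L$-action of $G$ on $V_L$). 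Setting $F:=\xi_0 N_G^1(H)\xi_0^{-1}=N_G^1(\xi_0 H\xi_0^{-1})=\mathrm{Stab}_G(\xi_0 p_H)$ yields (1) together with $A\subset F$; reductivity of $F$ follows from Matsushima's criterion, since $F$ is the isotropy group of a point on the closed affine orbit $Gp_H$.

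\emph{Structure of $F$.} With $F$ a closed reductive subgroup of $G=\prod_{i=1}^k\mathrm{SO}(n_i,1)$ containing the maximal $\R$-split torus $A$, I will analyze $F$ through its projections $\pi_i(F)\subset\mathrm{SO}(n_i,1)$. Each $\pi_i(F)$ is reductive and contains $A_i$, hence is of the form $C_i\cdot\mathrm{SO}(m,1)$ for some compact $C_i$ and a standard embedding $\mathrm{SO}(m,1)\hookrightarrow\mathrm{SO}(n_i,1)$ containing $A_i$. Inside $\prod_i\pi_i(F)$ the simple $\mathrm{SO}(m,1)$-factors of $F$ may be embedded diagonally across several $G_i$'s; for such a diagonal to contain the corresponding piece of $A$, the induced isomorphism of the $A_j$'s must intertwine the exponents, which, given the explicit form of $a(t)$, forces $\zeta_{j_1}=\zeta_{j_2}$ for every pair $j_1,j_2$ in the same diagonal block. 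Grouping the indices of $\{1,\dots,k\}$ by block produces the partition $\mathscr{P}$ with $\zeta$ constant on each $\mathcal{J}\in\mathscr{P}$; the simple factors become the $S_j=\Delta_{\mathcal{J}_j}\mathrm{SO}(m_{\mathcal{J}_j},1)$, and all compact pieces together with the components commuting with $A$ assemble into $S_0$.

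\emph{Equality (3) and main obstacle.} One inclusion is immediate: if $u(x)=p^-f\xi_0$ with $p^-\in P^-$ and $f\in F$, then $u(x)p_H=p^-f\xi_0 p_H=p^-\xi_0 p_H\in P^-\cdot V_L^0\subset V_L^{0-}$, since $F$ stabilizes $\xi_0 p_H$ and $P^-$ preserves $V_L^{0-}$. For the reverse inclusion I plan to use a Bia{\l}ynicki--Birula-type stratification of $V_L^{0-}\cap Gp_H$ as a disjoint union of $P^-$-orbits indexed by $A$-fixed points of $Gp_H$: if $u(x)p_H\in V_L^{0-}$, the limit $\lim_{t\to\infty}a(t)u(x)p_H$ exists and is an $A$-fixed $w\in Gp_H$, so $u(x)p_H\in P^-w$, and the additional constraint $u(x)\in N$ places $u(x)$ in the big Bruhat cell with respect to $P^-$, pinning down $w=\xi_0 p_H$ and yielding $u(x)\in P^-F\xi_0$. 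The M\"obius-embedding description of $\mathcal{I}(F\xi_0)\subset\prod_{i=1}^k\mathbb{S}^{n_i-1}$ then follows by pushing the block decomposition of $F$ through the quotient map $\mathcal{I}$, with each $S_j$ contributing a diagonal M\"obius embedding of $\mathbb{S}^{m_{\mathcal{J}_j}-1}$ into $\prod_{j\in\mathcal{J}_j}\mathbb{S}^{n_j-1}$. The main obstacle will be justifying the uniqueness of the attracting $A$-fixed point in the converse step; I plan to handle it by combining Kempf's canonical destabilizing parabolic with the rank-one weight analysis available for each $\mathrm{SO}(n_i,1)$-factor.
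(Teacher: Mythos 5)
Your construction of $\xi_0$ as the attracting limit $\lim_{t\to\infty}a(t)u(x_\ast)p_H$, the identification $F=N_G^1(\xi_0H\xi_0^{-1})$, the use of Matsushima's criterion to get reductivity, and the analysis of $F$ via projections and the constraint $A\subset F$ to produce the partition $\mathscr{P}$ with $\zeta$ constant on each block all match the paper's construction (modulo a cosmetic conjugation by $z\in Z_G(A)$ that the paper introduces to standardize the projections $\pi_i(F)$). The easy inclusion $\{\mathbf{x}:\mathcal{I}\circ u(\mathbf{x})\in\mathcal{I}(F\xi_0)\}\subset S_H$ is also correct as you've written it.

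However, the converse inclusion is precisely where your argument has a genuine gap, and it is the place where essentially all of the paper's work lies. You assert that if $u(\mathbf{x})p_H\in V_L^{0-}(A)$ then $u(\mathbf{x})p_H$ belongs to the $P^-$-orbit of the limiting $A$-fixed point $w$, and further that the constraint $u(\mathbf{x})\in N$ pins down $w=\xi_0p_H$. Neither step is automatic. The assertion that $V_L^{0-}(A)\cap Gp_H$ is stratified by $P^-$-orbits of $A$-fixed points, with each point of $V_L^{0-}$ sitting in the $P^-$-orbit of its attracting fixed point, is the nontrivial content of condition (3); you cannot cite it as a ready-made Bia{\l}ynicki--Birula theorem because $Gp_H$ is an affine (non-complete) variety, and even granting existence of the limit, membership of $u(\mathbf{x})p_H$ in $P^-w$ is a priori only a local statement near $w$. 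Likewise ``the additional constraint $u(\mathbf{x})\in N$ places $u(\mathbf{x})$ in the big Bruhat cell'' is true but irrelevant: what must be controlled is the Bruhat position of $u(\mathbf{x})\xi_0^{-1}$ (equivalently of $u(\mathbf{x})p_H$ relative to $\xi_0p_H$), and right-multiplication by $\xi_0^{-1}$ can move $u(\mathbf{x})$ out of the big cell. The paper proves the needed statement by an explicit, case-by-case elimination of intermediate Bruhat cells $P^-(\ldots,w_i,\ldots)$, using the diagonal structure of $F'=\Delta\mathrm{SO}(m_1,1)$ and the identity $w_i=\exp(\log2\,H_i)u_i(X_i)u_i^-(-2X_i^{-1})u_i(X_i)$ to absorb pieces into $F$ and derive convergence contradictions; this block of argument has no counterpart in your proposal. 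Finally, your stated plan to close the gap via ``Kempf's canonical destabilizing parabolic'' is misdirected: Kempf's theory applies to \emph{unstable} vectors whose orbit closure contains the origin, and here the hypothesis is precisely that $Gp_H$ is closed, so there is no destabilizing one-parameter subgroup to invoke. The paper reserves Kempf's machinery for the complementary case (Proposition~\ref{prop: 7. unstable gamma H constant}, where $Gp_H$ is not closed), and uses an entirely different Bruhat-cell computation here.
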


\begin{proof}
Fix $\mathbf{x_0} \in S_{H}$. Since $Gp_{H}$ is closed, there exists $\xi_0 \in G$ such that 
\begin{equation}\label{equation: xi_0}
    \lim_{n \to \infty} a(t_n)u(\mathbf{x_0}) p_{H} = \xi_0 p_{H}
\end{equation}
Then, we have
\begin{equation}
    A \subset \text{stab}_G(\xi_0 p_{H}) =  N^1_G\big(\xi_0 H \xi_0^{-1}\big).
\end{equation} 

Since $Gp_{H}$ is closed, by Matsushima criterion, $N^1_G\big(\xi_0 H \xi_0^{-1}\big)$ is reductive.
Because $A_i \subset \pi_i(  N^1_G\big(\xi_0 H \xi_0^{-1}\big))$ for all $1 \leq i \leq k$, $\pi_i(  N^1_G\big(\xi_0 H \xi_0^{-1}\big)) = z_i^{-1}\mathrm{SO}(m_i,1) z_i \cdot C_i$ for some compact semisimple subgroup $C_i$ of $G_i$, $1 \leq m_i \leq n_i$ and $z_i \in Z_{G_i}(A_i)$. 
Now let $z = (z_1, \cdots, z_k)$ and $H_1 = (z\xi_0) H (z\xi_0)^{-1}$. Let $F = N_G^1(H_1)$ and $F'$ be the smallest cocompact normal subgroup of $F$. 

Now we claim that 
\begin{equation}\label{claim: F = S1 cdots Sp}
    F' = S_1\cdot S_2\cdots S_p
\end{equation}
where $1 \leq p \leq k$ and  for each $1 \leq j \leq p$, $S_{j} = \Delta \mathrm{SO}(m_j,1)$ for some $m_j \in \mathbb{N}$.

Observe that for any $1 \leq i_1, i_2 \leq k$ such that $i_1 \neq i_2$, $\ker (\pi_{i_1}|_{F'}) \triangleleft F'$. 
Then, $\pi_{i_2}\big(\ker(\pi_{i_1}|_{F'})\big) \triangleleft \pi_{i_2}(F') =  \mathrm{SO}(m_{i_2},1)$. 
Since $\mathrm{SO}(m_{i_2},1)$ is simple, $\pi_{i_2}\big(\ker(\pi_{i_1}|_{F'})\big)$ is either $\{e\}$ or $\mathrm{SO}(m_{i_2},1)$.
Let $\pi_{i_1i_2}: G \to G_{i_1}\times G_{i_2}$ be the projection of $G$ onto $G_{i_1}\times G_{i_2}$ and let $\pi_{i_1i_2}(F') = F_{i_1i_2}$.
If $\pi_{i_2}\big(\ker(\pi_{i_1}|_{F'})\big) = \{e\}$, $\ker(\pi_{i_1}|_{F_{i_1i_2}}) = \{e\} \times \{e\}$. 
Then $\pi_{i_1}(F') \cong F_{i_1i_2}$. 

Let us define a relation 
$i_1 \sim i_2$ if $\pi_{i_1}(F') \cong F_{i_1i_2}$. 
In fact, this is an equivalence relation.
Let $I_1$ be an equivalence class of this relation in $\{1, 2, \cdots, k\}$. 
Define $\pi_{I_1}: G \to \Pi_{i \in I_1}G_i$ be the projection map.
Then $\pi_I(F') \cong \pi_i(F')$ for any $i \in I_1$. 
For any two different equivalent classes $I_1$ and $I_2$, $\pi_{I_1 \cup I_2}(F') = \pi_{I_1}(F') \times \pi_{I_2}(F')$.
In fact, each equivalence class forms $S_j$. This establishes claim (\ref{claim: F = S1 cdots Sp}).

Let $\mathbf{x} \in S_{H}$. Since $Gp_{H_1}$ is closed, $gN_G^1(H_1) = gF \mapsto gp_{H_1} : G/N_G^1(H_1)  = G/F \to Gp_{H_1}$ is a homeomorphism.
Then, we have
\begin{equation}\label{equation: convergence of u(x)}
\begin{split}
           \lim_{n \to \infty} a(t_n) u(\mathbf{x})p_H =  \lim_{n \to \infty}a(t_n)u(\mathbf{x})\xi_0^{-1}z^{-1}p_{H_1}= \xi_{\mathbf{x}}p_{H_1} \text{ and }
        \lim_{n \to \infty} a(t_n)u(\mathbf{x})\xi_0^{-1}z^{-1}F = \xi_{\mathbf{x}}F 
\end{split}
\end{equation} 
for some $\xi_{\mathbf{x}} \in G$.  

For (\ref{equation: 4. in the sphere}), we will show that $u(\mathbf{x})\xi_0^{-1}z^{-1}p_{H_1} \in P^-\cdot  p_{H_1}$, i.e., 
\begin{equation}\label{claim: is in P-F}
    u(\mathbf{x})\xi_0^{-1}z^{-1}F \subset P^-F.
\end{equation}


We now consider the case where $p = 1$ in (\ref{claim: F = S1 cdots Sp}), so that $F' = S_1$. 
Let $S_1 = \Delta_{\{1, 2, \cdots, k\}} \mathrm{SO}(m_1,1) = \{(\psi_1(h), \psi_2(h), \cdots, \psi_k(h)): h \in \SO(m_1, 1)\}$ for some $m_1 \in \N$, $\psi_1 = \mathrm{Id}_{\SO(m_1,1)}$ and $\psi_i \in \text{Aut}\big(\mathrm{SO}(m_1,1)\big)$, for each $2 \leq i \leq k$. 
Since $A \subset S_1$, $\psi_i(a_1(t)) = a_i(t)$.  

For each $i$, choose a Weyl group element $w_i$ in $G_i$ such that $w_i = w_i^{-1}$ and $w_ia_i(t)w_i^{-1} = a_i(-t)$ for all $t \in \R$ and $G$ admits a Bruhat decomposition 
\begin{equation}
    \begin{split}
        G &= P^-(e, \cdots, e)P^- \cup 
         P^-(w_1, e, \cdots, e)P^- \cup P^-(e, w_2, e, \cdots,e)P^- \cup P^-(w_1, w_2, e, \cdots, e) P^- \\ 
        & \qquad \qquad \qquad \cup \cdots \cup P^-(e, w_2 \cdots, w_{k}) P^- \cup P^-(w_1, w_2, \cdots, w_k)P^-\\
        &=P^-(w_1, \cdots, w_k) \cup P^-N_1(e, w_2, \cdots, w_k) \cup  P^-N_{2}(w_1, e, w_3, \cdots, w_k) \\
        & \qquad \qquad \qquad \cup P^-N_1N_2(e, e, w_3, \cdots, w_k) \cup \cdots \cup P^-N_2N_3\cdots N_{k}(w_1, e,\cdots, e) \cup P^-N.
    \end{split}     
\end{equation}
We claim that $$u(\mathbf{x})\xi_0^{-1}z^{-1} \in P^-(w_1, w_2, \cdots, w_k) \cup P^-N.$$ For the sake of contradiction, suppose that $u(\mathbf{x})\xi_0^{-1}z^{-1} \in P^-N_1(e, w_2, w_3, \cdots, w_k)$. 
Observe that $P^- = N^-AM$.
Then, we can express
\begin{equation*}
     u(\mathbf{x})\xi^{-1}_0z^{-1}p_{H_1} = n^-_{\mathbf{x}}b_{\mathbf{x}}z_{\mathbf{x}}(u_1(X_1), w_2, \cdots, w_k)p_{H_1} 
\end{equation*} for some $n^-_{\mathbf{x}} \in N^-$, $b_{\mathbf{x}} \in A$, $z_{\mathbf{x}} \in M$ and $X_1 \in \R^{n_1-1}$. 
Therefore, 
\begin{equation*}
    a(t_n)u(\mathbf{x})\xi^{-1}_0z^{-1} p_{H_1} = \big(a(t_n)n^-_{\mathbf{x}}a(t_n)^{-1}\big)b_{\mathbf{x}}z_{\mathbf{x}}a(t_n)(u_1(X_1), w_2, \cdots, w_k)p_{H_1}.
\end{equation*}

Because $a(t_n)n^-_{\mathbf{x}}a(t_n)^{-1} \to e$ as $n \to \infty$ and by (\ref{equation: convergence of u(x)}), the above equation implies that
\begin{equation}\label{equation: weyl gp elim convergence}
      a(t_n)(u_1(X_1),w_2, \cdots, w_k)p_{H_1}
\end{equation} converges in $Gp_{H_1}$ as $n \to \infty$. 
Write $X_1 = X_{11} + X_{12}$ where $X_{11} \in \R^{n_1-1}$ takes the first $m_1-1$ coordinates and $X_{12} \in \R^{n_1-1}$ takes the last $n_1-m_1$ coordinates of $X_1$. 
Suppose $X_{11} \neq 0$. Because $\psi_i(a_1(t))= a_i(t)$, we have $\psi_i\big(N_1 \cap \mathrm{SO}(m_1,1)\big) = N_i\cap \mathrm{SO}(m_1,1)$ for all $2 \leq i \leq k$. As a consequence, for each $i$, there exists $X_{i} \in \R^{n_i-1}$ such that the last $n_i-m_1$  coordinates of $X_i$ is all zero
and $\psi_i\big(u_1(X_{11})\big) = u_i(X_{i})$ holds, which implies that $\big(u_1(X_{11}), u_2(X_2), \cdots, u_k(X_k)\big) \in  \Delta_{\{1, \cdots, k\}}\mathrm{SO}(m_1,1) \subset F$.

Note that for each $i$, we can choose a Weyl group element $w_i$ satisfying that 
\begin{equation}\label{equation: weyl gp elt decomposition}
    w_i = \exp(\log2H_i)u_i(X_i)u_i^-(-2X_i^{-1})u_i(X_i)
\end{equation}

Now, 
\begin{equation}\label{equation: weyl gp elt elimination}
    \begin{split}
        &a(t_n)\big(u_1(X_1),w_2, \cdots, w_k\big)F \\
        & \quad = a(t_n)\big(u_1(X_{11})u_1(X_{12}),\exp(\log 2 H_2)u_2(X_{2})u_2^-(-2X_2^{-1})u_2(X_2),\cdots, \\  
        &\qquad \qquad \qquad \qquad \qquad \qquad \qquad \qquad \exp(\log2H_k)u_k(X_k)u_k^-(-2X_k^{-1})u_k(X_k)\big)F.
    \end{split}
\end{equation}

In view of (\ref{equation: weyl gp elim convergence}),
\begin{equation}\label{equation: weyl gp elt elim remove exp}
    a(t_n)\big(u_1(X_{11})u_1(X_{12}), u_2(X_{2})u_2^-(-2X_2^{-1})u_2(X_2),\cdots, u_k(X_k)u_k^-(-2X_k^{-1})u_k(X_k)\big)F
\end{equation} converges in $G/F$ as $n \to \infty$. 

Since $\big(u_1(X_{11}), u_2(X_2), \cdots, u_k(X_k)\big)$ is an element of $F$ and $A \subset F$, (\ref{equation: weyl gp elt elim remove exp}) is equal to 
\begin{equation}\label{equation: weyl gp elt elim absorb u+}
    \begin{split}
        &a(t_n)\big(u_1(X_{12}), u_2(X_{2})u_2^-(-2X_2^{-1}), \cdots, u_k(X_k)u_k^-(-2X_k^{-1})\big)F \\
        &= \big(u_1(e^{\zeta_1t_n} X_{12}), u_2(e^{\zeta_2t_n} X_2)u_2^-(-2e^{-\zeta_2t_n}X_2^{-1}), \cdots, u_k(e^{\zeta_kt_n}X_k)u_k^-(-2e^{-\zeta_kt_n}X_k^{-1})\big)F.
    \end{split}
\end{equation} that converges in $G/F$ as $n \to \infty$. 
If we restrict the above sequence into the first component $G_1$, there exists a sequence $\{f_{1n}'\}_n$ in $\pi_1(F)$ such that $u_1(e^{\zeta_1t_n} X_{12})f_{1n}'$ converges in $G_1$ as $n \to \infty$. 
However, this contradicts to the fact that the first $m_1-1$ coordinates of $X_{12}$ is all zero and $\{f_{1n}'\}_n \subset \pi_1(F) \subset \mathrm{SO}(m_1,1)\cdot \mathrm{SO}(n_1-m_1)$. Therefore, $X_{12} = 0$. 

Now, since $(u_1(-2X_{11}^{-1}), u_2^-(-2X_2^{-1}), \cdots, u_k^-(-2X_k^{-1}))$ is also in $F$, (\ref{equation: weyl gp elt elim absorb u+}) is equal to 
\begin{equation*}
    a(t_n)\big(u_1^-(2X_{11}^{-1}),u_2(X_{2}) , \cdots, u_k(X_k)\big)F.
\end{equation*}
Because $a(t_n)(u_1^-(2X_{11}^{-1}), e, \cdots, e)a(t_n)^{-1} \to (e, \cdots, e)$ as $n \to \infty$, we have that
\begin{equation}\label{equation: weyl gp elt elim u2 ... uk}
        a(t_n)\big(e,u_2(X_{2}) , \cdots, u_k(X_k)\big)F = \big(e, u_2(e^{\xi_2t_n}X_2), \cdots, u_k(e^{\xi_kt_n} X_k)\big) F 
\end{equation} converges as $n \to \infty$. 
This implies that there exists a sequence $\{(f_{1n}, f_{2n}, \cdots, f_{kn})\}_n \subset F' = \Delta \mathrm{SO}(m_1,1)$ such that $\big(e, u_2(e^{\zeta_2t_n}X_2), \cdots, u_k(e^{\zeta_kt_n} X_k)\big)\cdot(f_{1n}, f_{2n}, \cdots, f_{kn})$ converges in $G$ as $n \to \infty$. 
Hence, $f_{1n}$ converges to some $f_1 \in \mathrm{SO}(m_1,1) \subset G_1$. 
Note that $\psi_i(f_{1n}) = f_{in}$ for all $2 \leq i \leq k$ and $n \in \N$. This means that $f_{in}$ also converges to some $f_i \in \mathrm{SO}(m,1)\subset G_i$ and $\psi_i(f_1) = f_i$ holds.
This contradicts to the fact that $u_i(e^{\xi_it_n}X_i)f_{in}$ converges in $G_i$ as $n \to \infty$, because $u_i(e^{\xi_it_n}X_i)$ escapes to $\infty$ as $n \to \infty$. 
Hence, we conclude that $X_{11} = 0$.

Now, we have $X_1 =0$. For each $1 \leq i \leq k$, let $X_i' \in \R^{n_i-1}$ such that $\psi_i\big(u^-(e_1)\big) = u_i^-(X_i')$ where $e_1 = (1, 0, \cdots, 0) \in \R^{n_1-1}$. Observe that $X_i'^{-1} = X_i'$ because $\norm{X_i'}_2 = 1$. 
We can choose the weyl group element $w_i$ satisfying
\begin{equation*}
    w_i = \exp(-\log2H_i)u^-_i(X_i')u_i(-2X_i')u_i^-(X_i').
\end{equation*}
Then, we have that
\begin{equation*}
    \begin{split}
        &a(t_n)(e, w_2, \cdots, w_k) F\\ 
        &= a(t_n)\big(e, \exp(-\log2H_2)u^-_2(X_2')u_2(-2X_2')u_2^-(X_2'), \cdots, \\
        &\qquad \qquad \qquad \qquad \qquad \qquad \qquad \qquad \exp(-\log2H_k)u^-_k(X_k')u_k(-2X_k')u_k^-(X_k')\big)F
    \end{split}
\end{equation*} converges in $G/F$ as $n \to \infty$.
This implies 
\begin{equation*}
    a(t_n)\big(e, u^-_2(X_2')u_2(-2X_2')u_2^-(X_2'), \cdots, u^-_k(X_k')u_k(-2X_k')u_k^-(X_k')\big)F
\end{equation*} converges. 

Since $\big(u_1^-(e_1), u_2^-(X_2'), \cdots, u_k^-(X_k')\big) \in F$, the above equation is equal to 
\begin{equation*}
    a(t_n)(u_1^-(-e_1), u^-_2(X_2')u_2(-2X_2'), \cdots, u^-_k(X_k')u_k(-2X_k')\big)F.
\end{equation*} 

It follows that$$a(t_n)(e, u_2(-2X_2'), \cdots, u_k(-2X_k'))F$$ converges, since $a(t_n)(u_1^-(-e_1), u_2^-(X_2'), \cdots, u_k^-(X_k'))a(t_n)^{-1} \to e$  as $n \to \infty$. 
However, this is a contradiction, as an argument identical to the one showing that \eqref{equation: weyl gp elt elim u2 ... uk} does not converge also applies here. 
Therefore, 
\begin{equation*}
    u(\mathbf{x})\xi_0^{-1}z^{-1}p_{H_1} \notin P^-N_1(e, w_2, \cdots, w_k)p_{H_1}
\end{equation*}

In a similar manner, it can be shown that $u(\mathbf{x})\xi_0^{-1}z^{-1}$ does not lie in any Bruhat cell of the Bruhat decomposition other than $P^-Np_{H_1}$ and $P^-(w_1, \ldots, w_k)p_{H_1}$, i.e., 
\begin{equation}\label{equation: in PN cup Pw}
    u(\mathbf{x})\xi^{-1}_0z^{-1}F \subset \big(P^-N \cup P^-(w_1, \cdots, w_k)\big)\cdot F
\end{equation} in $G/F$.  

Now we will show that \eqref{equation: in PN cup Pw} implies \eqref{claim: is in P-F}.
Suppose that $u(\mathbf{x})\xi_0^{-1}z^{-1} \in P^-N$. Then, we can express 
\begin{equation}\label{equation: u(x) expression in PN}
    u(\mathbf{x})\xi_0^{-1}z^{-1}F = n'_{\mathbf{x}}b_{\mathbf{x}}'z_{\mathbf{x}}' \exp(X) F 
\end{equation} for some $n'_{\mathbf{x}} \in N^-$, $b'_{\mathbf{x}} \in A$, $z'_{\mathbf{x}} \in M$ and $X \in \mathrm{Lie}(N).$
Hence, 
\begin{equation}
    a(t_n)u(\mathbf{x})\xi_0^{-1}z^{-1}p_{H_1} = a(t_n)n'_{\mathbf{x}}a(t_n)^{-1}b'_{\mathbf{x}}z'_{\mathbf{x}}\exp(\mathrm{Ad}(a(t_n))(X))p_{H_1}.
\end{equation}
As $\lim_{n \to \infty} a(t_n)n'_{\mathbf{x}}a(t_n)^{-1} = e$, by \eqref{equation: convergence of u(x)}, we have 
\begin{equation}\label{equation: exp(Ad(a(tn))(X))pH1 is conv}
    \lim_{n \to \infty}\exp(\mathrm{Ad}(a(t_n))(X))p_{H_1} = (z'_{\mathbf{x}})^{-1}(b'_{\mathbf{x}})^{-1}\xi_{\mathbf{x}} p_{H_1}.
\end{equation}
Since $N$ is a unipotent group, $Np_{H_1}$ is closed in $V_L$. Hence, the map $g(N \cap F) \mapsto gp_{H_1}: N/(N \cap F) \to Np_{H_1}$ is a homeomorphism. If $\exp(X) \notin F$, $\exp(\mathrm{Ad}(a(t_n))(X))(N \cap F) \to \infty$ as $n \to \infty$ which contradicts  \eqref{equation: exp(Ad(a(tn))(X))pH1 is conv}. Therefore, by \eqref{equation: u(x) expression in PN}, we have 
\begin{equation}
    u(\mathbf{x})\xi_0^{-1}z^{-1}F \subset P^-F,
\end{equation} provided that $u(\mathbf{x})\xi_0^{-1}z^{-1} \in P^-N$.
Furthermore, observe that 
\begin{equation}\label{equation: weyl gp elt is in P-F}
    (w_1, \cdots, w_k) \in P^-F.
\end{equation} 
This is because $\psi_i(w_1)$ is again a Weyl group element of $G_i$, so 
the difference between $(w_1, \cdots, w_k)$ and $(w_1, \psi_2(w_1), \cdots, \psi_k(w_1))$ is in $Z_G(A) \subset P^-$. 
We therefore conclude the claim (\ref{claim: is in P-F}) holds. 

When $F'$ has multiple simple components, or, $p>1$, the same method also applies to show claim (\ref{claim: is in P-F}).  Therefore, we conclude that $S_H \subset \{\mathbf{x} \in \R^{\Sigma_{i=1}^k (n_i-1)}: \mathcal{I}\circ u(\mathbf{x}) \in \mathcal{I}(Fz\xi_0) \}.$
The converse inclusion also holds since $Fz\xi_0p_{H} = z\xi_0p_{H} \in V^{0-}_L(A)$ and $P^-V^{0-}_L(A) \subset V^{0-}_L(A)$. Finally, replace $\xi_0z$ by $\xi_0$. This completes the proof. 
\end{proof}

\begin{rem}
We note that $\mathcal{I}(F\xi_0)$ in Proposition \ref{prop: 6.  u(x) is in circle} is the image of a diagonal M\"obius embedding of a product of subspheres as defined in Definition \ref{def: 1. obstruction set}.
\end{rem}


\subsection{$Gp_H$ is not closed.} To identify the obstructions to equidistribution in this case, we need the following lemma and propositions. 

\begin{lem}[\cite{BSX}, Lemma A.1(Kempf)]\label{lemma: 2. convert not closed to unstable}
    Let $G'$ be the set of $\R$ points of an algebraic group over $\R$. 
    Let $V$ be a rational representation of $G'$ and $v \in V$. 
    Define $ S = \text{Zcl}(G'\cdot v)\backslash G'\cdot v$ where $\text{Zcl}(\cdot)$ is the Zariski closure of a given subset in $V$. 
    Suppose that $S$ is nonempty. 
    Then, there exists a rational representation $W$ of $G'$ and a $G'$-equaivariant polynomial map $P: V \to W$ such that $P(S) = \{0\}$ and $P(v) \neq 0$. 
\end{lem}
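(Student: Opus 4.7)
The plan is to produce the equivariant polynomial map $P$ by dualizing an appropriate finite-dimensional $G'$-invariant subspace of the ideal of polynomial functions vanishing on $S$. The first step is to note that $S$ is Zariski closed in $V$ and $G'$-invariant. Indeed, the orbit $G'\cdot v$ is constructible by Chevalley's theorem and, being an orbit of an algebraic group action, is open in its Zariski closure $\overline{G'\cdot v}$. Hence $S = \overline{G'\cdot v}\setminus G'\cdot v$ is closed in $\overline{G'\cdot v}$ and therefore closed in $V$. Since $G'\cdot v$ and $\overline{G'\cdot v}$ are $G'$-invariant, so is $S$.

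Next, because $v\notin S$ and $S$ is Zariski closed, there exists $f\in\R[V]$ with $f|_S=0$ and $f(v)\neq 0$, i.e.\ $f\in I(S)\setminus\{g:g(v)=0\}$, where $I(S)\subset\R[V]$ denotes the ideal of $S$. The ideal $I(S)$ is $G'$-invariant under the natural action $(g\cdot h)(v')=h(g^{-1}v')$. Now I would invoke the local finiteness of this action: since $V$ is a rational representation of $G'$, the group $G'$ preserves the degree filtration $\R[V]_{\leq d}=\mathrm{Sym}^{\leq d}(V^*)$, each of which is a finite-dimensional rational representation of $G'$. Thus the $G'$-translates of $f$ lie in the finite-dimensional space $\R[V]_{\leq\deg f}$, so the linear span $W^*:=\mathrm{span}\,(G'\cdot f)$ is a finite-dimensional $G'$-invariant subspace of $I(S)$, and hence a rational representation of $G'$.

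Finally, set $W=(W^*)^*$ with the contragredient $G'$-action, which is again a rational representation, and define $P:V\to W$ by
\begin{equation*}
P(v')(w^*) = w^*(v'),\qquad v'\in V,\ w^*\in W^*.
\end{equation*}
Choosing a basis $f_1,\dots,f_N$ of $W^*$, the corresponding components of $P$ are the polynomial functions $f_1,\dots,f_N\in\R[V]$, so $P$ is a polynomial map. Equivariance is immediate: for $g\in G'$, $v'\in V$, $w^*\in W^*$ one has $P(gv')(w^*)=w^*(gv')=(g^{-1}\cdot w^*)(v')=P(v')(g^{-1}\cdot w^*)=(g\cdot P(v'))(w^*)$. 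By construction $P(s)=0$ for every $s\in S$ since every element of $W^*\subset I(S)$ vanishes on $S$, while $P(v)\neq 0$ because evaluating on $f\in W^*$ gives $f(v)\neq 0$.

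The only genuinely delicate point is the local finiteness statement needed to extract a finite-dimensional $G'$-invariant $W^*$; this is the standard fact that a rational representation of $G'$ on $V$ induces a rational (hence locally finite) action on the symmetric algebra $\R[V]=\mathrm{Sym}(V^*)$, which I would simply quote rather than reprove.
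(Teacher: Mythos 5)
The paper states this lemma as a citation to \cite{BSX} (Lemma~A.1, attributed to Kempf) and gives no proof, so there is no argument in the paper to compare against. Taking your proof on its own terms: the dualization scheme in steps (2)--(4) is correct and standard. Local finiteness of the $G'$-action on $\R[V]=\mathrm{Sym}(V^*)$ does hold for any rational representation, so $W^*=\mathrm{span}(G'\cdot f)$ is a finite-dimensional rational $G'$-submodule of $I(S)$, and the resulting $P:V\to W=(W^*)^*$ is polynomial, equivariant, kills $S$, and is nonzero at $v$ exactly as you compute.

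The gap is at the very first step, where you claim $S$ is Zariski closed via Chevalley's theorem and ``orbits are open in their Zariski closure.'' Both of these are theorems over an \emph{algebraically closed} field, and both fail for the group of $\R$-points of an algebraic $\R$-group: the image of a morphism of real algebraic varieties is merely semi-algebraic (Tarski--Seidenberg), not Zariski-constructible. Concretely, take $G'=\R^*$ acting on $V=\R$ by $t\cdot x=t^2x$ and $v=1$. Then $G'\cdot v=\R_{>0}$, its Zariski closure is all of $\R$, and $S=(-\infty,0]$, which is not Zariski closed and in fact is Zariski-dense. Step (2) of your argument then collapses: there is no $f\in I(S)$ with $f(v)\neq 0$, because any polynomial vanishing on $S$ is identically zero. (This example also shows that, read completely literally with $G'\cdot v$ meaning the $\R$-points orbit, the lemma has no valid proof; the intended statement in \cite{BSX} is presumably either over an algebraically closed field, or with $G'\cdot v$ taken to be the $\R$-points of the \emph{algebraic} orbit $\mathbf G\cdot v$, in which case $S$ really is the $\R$-points of a proper Zariski-closed boundary and your argument goes through verbatim.) You should either add a hypothesis guaranteeing that $S$ is Zariski closed --- e.g. that $G'\cdot v$ is Zariski locally closed, or that $G'\cdot v$ denotes the real points of the algebraic orbit --- or verify that this holds in the situation where the lemma is applied, rather than invoking Chevalley, which is simply unavailable over $\R$.
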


Let $V$ be a representation of $G$ and $v$ be a nonzero vector in $V$. We call $v$ is unstable if $\text{Zcl}(G\cdot v)$  contains the origin. The following proposition is an adaptation of a result from \cite{SY24} for our purposes. 

\begin{prop}[\cite{SY24}, Proposition 2.2]\label{proposition: computable lower bound}
    Let $G = \mathrm{SO}(n_1,1) \times  \cdots \times \mathrm{SO}(n_k,1)$, $V$ be a finite dimensional  representation of $G$ over $\R$ and $v$ be unstable in $V$. 
    Then, there exists $(p_1, p_2, \cdots, p_k) \in \Z_{\geq 0}^k$, $g_0 \in G$ and some constants $C>0$ and $\beta>0$ such that the following holds: Let 
        $$W = (\R^{n_1+1})^{\otimes p_1}  \bigotimes \cdots \bigotimes (\R^{n_k+1})^{\otimes p_k}$$ be the representation of $G$ such that $\mathrm{SO}(n_i,1)$ acts on $\R^{n_i+1}$ as the standard representation for each $1\leq i\leq k$, and 
        let $w_0 = e_0^{p_1}\tensor  \cdots \tensor e_0^{p_k} \in W$, where for each $1\leq i\leq k$,  $e_0=(1,0,\ldots,0)\in\R^{n_i+1}$ and $e_0^{p_i}\in (\R^{n_i+1})^{\otimes{p_i}}$. 
        Then, for any $g \in G$, 
            \begin{equation}\label{equation: Kempf computable lower}
                \norm{gg_0w_0}<C\norm{gv}^{\beta}.
            \end{equation}
    
\end{prop}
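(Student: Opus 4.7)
The strategy is to apply Kempf's instability theory to $v$ and exploit the $\R$-rank-one structure of each factor $\SO(n_i,1)$ to match the optimal destabilizing data to a vector built from the standard representations. Since $v$ is unstable, Kempf's theorem produces an optimal one-parameter subgroup $\lambda\colon\R\to G$, unique up to conjugation by its instability parabolic $P(\lambda)$, together with a normalized slope $m(v,\lambda)>0$ measuring how fast $\lambda$ drives $v$ to $0$. Because every nontrivial one-parameter subgroup of $\SO(n_i,1)$ is, by rank-one, $G_i$-conjugate to a power of $a_i$, there exist $g_{0,i}\in G_i$ and $r_i\in\Z_{\geq 0}$ with $g_{0,i}^{-1}\lambda_i(s)g_{0,i}=a_i(r_i s)$ for all $s\in\R$. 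Setting $g_0=(g_{0,i})_i$, after this conjugation we may assume $\lambda$ lies in $A$ and $P(\lambda)=g_0\bigl(\prod_iP_i^-\bigr)g_0^{-1}$.

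Next I would choose the exponents $p_i$. In the standard representation of $\SO(n_i,1)$ on $\R^{n_i+1}$ the $A_i$-weights are $\{\zeta_i,0,-\zeta_i\}$, and $e_0$ is the unique (up to scalar) highest-weight vector, stabilized up to the character $a_i(s)\mapsto e^{s\zeta_i}$ by $P_i^-$. Hence $e_0^{p_i}\in(\R^{n_i+1})^{\otimes p_i}$ has $A_i$-weight $p_i\zeta_i$, and $g_0w_0$ is a joint highest-weight vector for $P(\lambda)$ whose $\lambda$-weight equals $\sum_ip_ir_i\zeta_i$. I would pick $(p_1,\ldots,p_k)\in\Z_{\geq 0}^k$, with $p_i=0$ exactly when $r_i=0$, so that $\sum_ip_ir_i\zeta_i$ is a positive rational multiple of $m(v,\lambda)$; clearing denominators if needed and defining $\beta=\sum_ip_ir_i\zeta_i/m(v,\lambda)>0$ then yields the required weight matching.

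For the inequality $\norm{gg_0w_0}<C\norm{gv}^{\beta}$, I would use the Cartan decomposition $G=KAK$: boundedness of $K$-orbits in any finite-dimensional representation reduces the estimate to $g=a(s)\in A$. Since $g_0w_0$ is a joint highest-weight vector, $\norm{a(s)g_0w_0}$ is, up to a constant depending only on $g_0$, a single exponential of rate $\sum_ip_ir_i\zeta_i$ in $s$; Kempf's characterization of the optimal slope gives $\norm{a(s)v}\geq c\,e^{s\cdot m(v,\lambda)}$ along the destabilizing direction in $A$. The weight matching of Step~2 then makes the toral inequality immediate, and Lemma~\ref{lemma: 2. convert not closed to unstable} applied to the pair $(v,g_0w_0)$ (viewed inside a common ambient representation via an equivariant polynomial map) together with the compactness of $K$ extends this to all of $G$.

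The main obstacle is verifying the inequality uniformly in $g\in G$, rather than merely along $\lambda$: a priori another 1-PS could destabilize $v$ more slowly than it destabilizes $g_0w_0$, breaking the bound. Resolving this uses Kempf's maximality of $\lambda$ together with the fact that $g_0w_0$ is a highest-weight vector for the entire instability parabolic: every 1-PS destabilizing $v$ has normalized slope $\leq m(v,\lambda)$ and, by the weight matching of Step~2, destabilizes $g_0w_0$ in the same ratio $\beta$. The parabolic reduction through $P(\lambda)=g_0(\prod_iP_i^-)g_0^{-1}$, combined with the compactness of the maximal compact factor, then promotes the toral estimate to the claimed inequality on all of $G$, with $C$ depending only on $v$, $g_0$, and the chosen representations.
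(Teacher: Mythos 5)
Your reconstruction of the argument from Kempf's theory has a genuine gap at the point where you choose $(p_1,\ldots,p_k)$. The condition you impose — that $\sum_i p_i r_i\zeta_i$ be a positive rational multiple of $m(v,\lambda)$ — is vacuous: since $\beta$ is \emph{defined} as the ratio $\sum_i p_i r_i\zeta_i/m(v,\lambda)$, the condition holds for \emph{any} choice with $p_i\neq 0$ in at least one slot where $r_i\neq 0$. So your step (2) constrains only which $p_i$ may vanish, not their relative sizes, and the asserted ``weight matching'' is not actually achieved. The missing constraint is that the highest weight of $W$ must be (a positive multiple of) $\hat\lambda$, the image of Kempf's optimal cocharacter $\lambda$ under the $\Z$-module map $X_*(S)\to X^*(S)$ induced by the Kempf--Rousseau norm — in these coordinates, $p_i$ proportional to $r_i$, not merely nonzero in the same slots. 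This is exactly the content of the remark following the proposition in the paper, which extends the $\mathbb{K}$-split framework of \cite{SY24} by defining the bilinear form $(\lambda,\lambda')=\sum_i p_i p_i'$ on $X_*(S)\otimes\Q$ and the homomorphism $\hat{\cdot}\colon X_*(S)\to X^*(S)$.

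Because that constraint is missing, the ``main obstacle'' you identify in the last paragraph is not actually resolved. You assert that by ``the weight matching of Step~2'' every destabilizing 1-PS drives $g_0w_0$ and $v$ in the same ratio $\beta$, but Step~2 only matches rates along $\lambda$ itself. For a cocharacter $\mu$ not proportional to $\lambda$, the slope of $g_0w_0$ along $\mu$ is $\sum_i p_i\mu_i\zeta_i$, and if the $p_i$ are not proportional to the $r_i$ this can exceed $\beta\cdot m(v,\mu)$; Kempf's maximality of $\lambda$ for $v$ controls $m(v,\mu)/\norm{\mu}$ but says nothing about $g_0w_0$, so the uniform inequality over $G$ can fail. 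Also note that the paper does not actually re-prove this proposition: it cites \cite{SY24} Proposition 2.2 and supplies only the modifications needed for $G$ not $\R$-split — and those modifications are precisely the data ($\Phi^+(G,S)$, $(\cdot,\cdot)$, $\hat{\cdot}$) that pin down the correct $(p_1,\ldots,p_k)$, which your proposal leaves free.
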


\begin{rem}\label{remark: 2 Rn_1-1 tensor Rn_k-1} Proposition 2.2 in \cite{SY24} was originally formulated for a connected $\mathbb{K}$-split semisimple group where $\mathbb{K}$ denotes a field of characteristic 0. The proposition established the existence of an irreducible representation $W$ of $G$ and a highest weight vector $w_0$ in $W$ that satisfy the inequality (\ref{equation: Kempf computable lower}) for some $g_0 \in G$, $C>0$ and $\beta>0$. However, it did not specify the particular representation $W$ and the vector $w_0$.

In our situation, the group $G = \mathrm{SO}(n_1, 1) \times \cdots \times \mathrm{SO}(n_k,1)$ is not $\R$-split. Nevertheless, the proof of Proposition 2.2 from \cite{SY24} remains valid, provided
we choose the specific $W$ and $w_0$ as described in our Proposition \ref{proposition: computable lower bound}. This requires extending certain definitions in the context of connected $\mathbb{K}$-split semisimple groups to our group $G$ in an appropriate manner, as follows. 

 Let $S$ be a maximal torus in $G$.
 We denote the group of cocharacters of $S$ defined over $\R$ as $X_*(S):= \text{Hom}(\mathbb{G}_m, S) \cong \Z^k$, and 
 the group of characters of $S$ defined over $\R$ as
 $X^*(S):= \text{Hom}(S, \mathbb{G}_m) \cong \Z^k$.
 Then, any cocharacter $\delta \in X_*(S)$ can be expressed as  $\delta = (p_1, p_2, \cdots, p_k)$ for some $(p_1, \cdots, p_k) \in \Z^k$.
 Similarly, any character $\alpha \in X^*(S)$ can be expressed as $\alpha = (q_1, q_2, \cdots , q_k)$ for some $(q_1, q_2,  \cdots , q_k) \in \Z^k$.
 We have a pairing $\langle \cdot, \cdot\rangle:X^*(S) \times X_*(S) \to \mathrm{Hom}(\mathbb{G}_m, \mathbb{G}_m ) \cong \Z$ given by $\alpha \circ \delta(t) = t^{\langle \alpha, \delta \rangle}$ for any $t \in \C^*$.
 Let $\Phi^+(G,S)$ be the set of positive roots on $S$ for the Adjoint action of $S$ on the Lie algebra $\mathcal{G}$. We then define a $\Q$-valued positive definite bilinear form $(\cdot, \cdot)$ on $X_*(S) \tensor \Q$ by  
\begin{equation}
    (\lambda, \lambda') = \sum_{\alpha \in \Phi^+(G,S)} \langle \alpha, \lambda \rangle \langle \alpha, \lambda'\rangle  = \sum_{i=1}^k p_i \cdot p_i'
\end{equation} for all $\lambda = (p_1, \cdots , p_k)$ and $\lambda' = (p_1', \cdots, p_k') \in X_*(S)$ and extend it to $X_*(S)\tensor \Q$. Finally, define an injective $\Z$-module homohorphism $\, \hat{\cdot}\, :  X_*(S) \to X^*(S)$ by 
\begin{equation}
    \hat{\delta} = (p_1, \cdots, p_k)
\end{equation} for each $\delta = (p_1, \cdots, p_k) \in X_*(S)$.
\end{rem}

\begin{prop}\label{prop: 7. unstable gamma H constant}
    Let $H \in \mathscr{H}$ such that $G p_H$ is not closed. 
Then, for each $j \in \{1, 2, \cdots, k\}$, there exists a constant 
$(C_{H})_j \in \R^{n_j-1}$ such that
\begin{equation}
    S_H \subset \bigcup_{j = 1}^k \{ \mathbf{x} = (\mathbf{x}_1, \cdots, \mathbf{x}_k) \in \R^{\Sigma_{i = 1}^k(n_i-1)} : \mathbf{x}_j = (C_{H})_j\}.
\end{equation}
\end{prop}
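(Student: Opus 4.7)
The plan is to combine Kempf's lemma (Lemma~\ref{lemma: 2. convert not closed to unstable}) with the explicit Kempf--Hilbert--Mumford estimate in Proposition~\ref{proposition: computable lower bound}, which transfers the non-closed orbit situation to the concrete standard representations of the factors $\mathrm{SO}(n_i,1)$. Since $Gp_H$ is not closed, the set $S := \mathrm{Zcl}(Gp_H)\setminus Gp_H$ is nonempty, so Kempf's lemma yields a rational representation $W$ of $G$ and a $G$-equivariant polynomial map $P:V_L\to W$ with $P(S)=\{0\}$ and $P(p_H)\neq 0$. Because $P$ is continuous, $P(\mathrm{Zcl}(Gp_H))\subset \overline{P(Gp_H)}=\overline{G\cdot P(p_H)}$, and the identity $P(S)=\{0\}$ then forces $0\in\overline{G\cdot P(p_H)}$. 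Thus $P(p_H)$ is unstable in $W$.

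Applying Proposition~\ref{proposition: computable lower bound} to the unstable vector $P(p_H)$ produces $(p_1,\dots,p_k)\in\Z_{\geq 0}^k$, $g_0\in G$, and constants $C,\beta>0$ such that, with $W_0=\bigotimes_{i=1}^k(\R^{n_i+1})^{\otimes p_i}$ and $w_0=e_0^{p_1}\otimes\cdots\otimes e_0^{p_k}$,
\[
\norm{g g_0 w_0} \leq C\,\norm{g P(p_H)}^{\beta}\qquad\forall g\in G.
\]
For $\mathbf{x}\in S_H$ the sequence $a(t)u(\mathbf{x})p_H$ converges, so by $G$-equivariance $a(t)u(\mathbf{x})P(p_H)=P(a(t)u(\mathbf{x})p_H)$ stays bounded, and hence $\norm{a(t)u(\mathbf{x})g_0w_0}$ is bounded in $t$.

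Writing $v_i:=g_{0i}e_0\in\R^{n_i+1}$ and using the cross-norm property of the sup-norm on tensor products,
\[
\norm{a(t)u(\mathbf{x})g_0 w_0} \;=\; \prod_{i=1}^{k}\norm{a_i(t)u_i(\mathbf{x}_i)v_i}^{p_i}.
\]
Decomposing $v_i=(a_i,b_i,c_i)$ with $a_i,c_i\in\R$ and $b_i\in\R^{n_i-1}$, the isotropy $Q_{n_i}(v_i)=Q_{n_i}(g_{0i}e_0)=0$ gives $2a_ic_i=\norm{b_i}^2$. A direct computation shows
\[
a_i(t)u_i(\mathbf{x}_i)v_i = \bigl(e^{\zeta_i t}Q_i(\mathbf{x}_i),\; b_i+c_i\mathbf{x}_i,\; e^{-\zeta_i t}c_i\bigr),\qquad Q_i(\mathbf{x}_i):=a_i+\mathbf{x}_i\cdot b_i+c_i\tfrac{\norm{\mathbf{x}_i}^2}{2}.
\]
When $c_i\neq 0$, substituting $a_i=\norm{b_i}^2/(2c_i)$ and completing the square yields $Q_i(\mathbf{x}_i)=\frac{1}{2c_i}\norm{b_i+c_i\mathbf{x}_i}^2$, so $Q_i(\mathbf{x}_i)=0$ if and only if $\mathbf{x}_i=-b_i/c_i$, in which case the factor equals $e^{-\zeta_i t}\abs{c_i}$; otherwise the factor grows like $e^{\zeta_i t}$. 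When $c_i=0$, isotropy forces $b_i=0$, so $v_i=a_ie_0$ is fixed by $u_i(\mathbf{x}_i)$ and the factor equals $\abs{a_i}e^{\zeta_i t}$ for every $\mathbf{x}_i$.

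The only way the product can remain bounded as $t\to\infty$ is if some index $j$ with $p_j>0$ contributes decay, which, by the case analysis above, requires $c_j\neq 0$ and $\mathbf{x}_j=-b_j/c_j$. Defining $(C_H)_j:=-b_j/c_j$ for every $j$ with $p_j>0$ and $c_j\neq 0$, and taking $(C_H)_j\in\R^{n_j-1}$ arbitrary for the remaining indices, yields the desired inclusion $S_H\subset\bigcup_{j=1}^k\{\mathbf{x}:\mathbf{x}_j=(C_H)_j\}$. The main obstacle is verifying that boundedness of the product really forces a single-point constraint on some $\mathbf{x}_j$ rather than a hypersurface condition; this is resolved precisely by the isotropy of $e_0$, which collapses the quadric $\{Q_i=0\}$ to the single point $\{-b_i/c_i\}$ whenever $c_i\neq 0$.
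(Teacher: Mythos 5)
Your proof follows essentially the same route as the paper: reduce to an unstable vector via Kempf's lemma, apply the explicit estimate of Proposition~\ref{proposition: computable lower bound} in the tensor power of standard representations, compute $a_i(t)u_i(\mathbf{x}_i)v_i$ with $v_i = g_{0i}e_0$, and exploit the $\mathrm{SO}(n_i,1)$-invariance of $Q_{n_i}$ (isotropy of $e_0$) to show that vanishing of the expanding coefficient collapses the quadric to a single point. The only cosmetic difference is that you complete the square on $Q_i(\mathbf{x}_i)$ using isotropy of $v_i$, whereas the paper applies isotropy to the transformed vector $a_i(t)u_i(\mathbf{x}_i)v_i$ and reads off the coordinate constraints directly; these are the same observation.
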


\noindent\textbf{Proof.}
We may assume that $p_H$ is $G$-unstable, according to Lemma \ref{lemma: 2. convert not closed to unstable}. 
Furthermore, in view of Proposition \ref{proposition: computable lower bound} and Remark \ref{remark: 2 Rn_1-1 tensor Rn_k-1}, there exist $g_0 \in G$, $(p_1, \cdots, p_k)\in \Z_{\geq 0}^k$ and $C$, $\beta>0$ such that for a representation $W$ and a vector $w_0 \in W$ given in Proposition \ref{proposition: computable lower bound}
and  for any $g \in G$, 
    \begin{equation}\label{equation: 6. upper bound}
        \norm{gg_0w_0}<C\norm{g p_H}^{\beta}
    \end{equation} 
    holds. We can choose the sup-norm on $W$ to satisfy the cross norm property. 
Let $\mathbf{x} \in S_{H}$. In view of the definition of $S_{H}$ and equation (\ref{equation: 6. upper bound}), a sequence $\{a(t_n)u(\mathbf{x})g_0 w_0\}_n$ is bounded. 
Write $g_0w_0 = (v_1\otimes v_1 \otimes \cdots \otimes v_1) \otimes (v_2 \otimes \cdots \otimes v_2) \otimes \cdots \otimes (v_k \otimes \cdots \otimes v_k)$ for some $v_i = (v_{i0}, v_{i1}, \cdots, v_{in_i}) \in \R^{n_i+1}$, $1 \leq i \leq k$. 
Then, we have
\begin{equation}\label{equation: a(t)u(x)v i}
    \begin{split}
        a_i(t_n)&u_i(\mathbf{x_i})v_i\\
        &= e^{\zeta_i t_n}(v_{i0} + \sum_{j = 1}^{n_i -1}v_{ij}\mathbf{x_i}_j + v_{in_i}\frac{\norm{\mathbf{x_i}}_2^2}{2})\mathbf{e}_0 + \sum_{j=1}^{n_i-1}(v_{ij} + v_{in_i}\mathbf{x_i}_j)\mathbf{e}_j +v_{in_i}e^{-\zeta_i t_n}\mathbf{e}_{n_i}  
    \end{split}
\end{equation} where $\{\mathbf{e}_0, \mathbf{e}_1, \cdots, \mathbf{e}_{n_i}\}$ is the standard basis of $\R^{n_i+1}$ and $\mathbf{x_i} = (\mathbf{x_i}_1, \cdots, \mathbf{x_i}_{n_i-1}) \in \R^{n_i-1}$. 

If $v_{i0} + \sum_{j = 1}^{n_i -1}v_{ij}\mathbf{x_i}_j + v_{in_i}\frac{\norm{\mathbf{x_i}}_2^2}{2} \neq 0$ for all $1 \leq i \leq k$, then for large enough $n$, 
\begin{equation*}
    \begin{split}
        \norm{a(t_n)u(\mathbf{x})g_0w_0} & =\Pi_{i=1}^k\norm{a_i(t_n)u_i(\mathbf{x}_i)v_i}^{p_i} \text{, by the cross norm property}\\
        &= \Pi_{i = 1}^k |e^{\zeta_i t}(v_{i0} + \sum_{j = 1}^{n_i -1}v_{ij}\mathbf{x_i}_j + v_{in_i}\frac{\norm{\mathbf{x_i}}_2^2}{2})|^{p_i}
    \end{split}
\end{equation*}
and this goes to $\infty$ as $n \to \infty$. 
Therefore, we conclude that 
\begin{equation}\label{equation: 6 quadratic equation}
     v_{i0} + \sum_{j = 1}^{n_i -1}v_{ij}\mathbf{x_i}_j + v_{in_i}\frac{\norm{\mathbf{x_i}}_2^2}{2} = 0
\end{equation} for some $1 \leq i \leq k$, since $a(t_n)u(\mathbf{x})g_0w_0$ is bounded.

Suppose that for a fixed $i$, (\ref{equation: 6 quadratic equation}) holds.
Consider the quadratic form 
\begin{equation*}\label{equation: 6.quadratic form}
    Q_{n_i}(x_0, x_1, \cdots, x_{n_i}) =x_0x_{n_i}-(x_1^2 + x_2^2 + \cdots + x_{n_i-1}^2).
\end{equation*} and note that $\mathbf{e}_0 \in \R^{n_i+1}$ is a solution to $Q_{n_i} = 0$. 
By the definition of $\SO(n_i,1)$,  $$a_i(t_n)u_i(\mathbf{x_i})v_i = a_i(t_n)u_i(\mathbf{x}_i)\pi_i(g_0)e_0$$ is still a solution to $Q_{n_i} = 0$. Furthermore, when $Q_{n_i} = 0$, $x_0 = 0$ implies $x_1 = x_2 = \cdots = x_{n_i-1} = 0$ and $x_{n_i} \neq 0 $. Therefore, by \eqref{equation: a(t)u(x)v i} and \eqref{equation: 6 quadratic equation}, we have
\begin{equation*}
    v_{i1} + v_{in_i}\mathbf{x_i}_1 = v_{i2} + v_{in_i}\mathbf{x_i}_2 = \cdots = 
    v_{i(n_{i}-1)} + v_{in_i}\mathbf{x_i}_{n_i-1} = 0 
\end{equation*} 
and $v_{in_i} \neq 0$. 
Then, 
\begin{equation}
    \mathbf{x_i} = (-\frac{v_{i1}}{v_{in_i}}, \cdots , -\frac{v_{i(n_i-1)}}{v_{in_i}})
\end{equation} holds. This completes the proof. \qed

\begin{rem}
    We note that since we have $\varphi_i'(s) \neq 0$ for almost every $s \in I$ and all $1 \leq i \leq k$, by Proposition \ref{prop: 7. unstable gamma H constant}, $I_{H} = \{s \in I: \varphi(s) \in S_{H}\}$ is a null set for every $H \in \mathscr{H}$ such that $Gp_H$ is not closed. 
Then, $E_{2} = \bigcup_{\{H \in \mathscr{H} \, : \, Gp_H \text{ is not closed}\}} I_{H}$ is a null set. 
\end{rem}

\subsection{Proof of Theorem \ref{thm: 2. equidistribution of polyn approx}} 
Note that by the assumption of Theorem \ref{Theorem : 1. main theorem} and Proposition \ref{prop: 7. unstable gamma H constant}, $E$ is a null set. 
Recall that in Section \ref{Section: Shrinking}, we chose $s_0 \in I\backslash E$. 

In section \ref{Sec: nondiv and unip inv}, we showed the nondivergence of $\mu_{s_0,t}$ and the unipotent invariance of $\mu_{s_0}$. Now it remains to show that $\mu_{s_0}$ is, indeed, $\mu_L$. 

Let $W$ be the largest connected unipotent subgroup of $N^+$ such that $\mu_{s_0}$ is invariant under the action of $W$. Then by Proposition~\ref{prop: unip inv}, we have $\{u(r\mathbf{w}_{k_1}): r \in \R\} \subset W$, hence $\dim{W}\geq 1$.

Recall that 
\begin{equation*}
\begin{split}
    \mathscr{H} = \{H \lneq L : \,  H \text{ is clo} &\text{sed and connected, } H \cap \Gamma \text{ is a lattice in } H, \text{ and some nontrivial }\\ 
    &\Ad_L\text{-unipotent } 
    \text{one parameter subgroup in } H \text{ acts ergodically on } H/H\cap \Gamma. \}.
\end{split}
\end{equation*} 
For $H \in \mathscr{H}$, define 
\begin{equation}
    \begin{split}
        & N(W,H) = \{g \in L : g^{-1}Wg \subset H\} \quad \text{ and } \\
        &S(W,H) = \bigcup_{F \lneq H, F \in \mathscr{H} }N(W,F).
    \end{split}
\end{equation}
Then, we have
\begin{equation}\label{equation: N(W,H) inv under N(H)}
    N(W, H)N_L(H) = N(W,H)
\end{equation} and
\begin{equation}
    \big(N(W,H)\backslash S(W,H)\big)x_0 = \big(N(W,H)x_0\big)\backslash\big(S(W,H)x_0\big)
\end{equation} in $X$.

For the sake of contradiction, suppose that $\mu_{s_0}$ is not equidistributed.
By Ratner's theorem(\cite{Rat91}, Theorem 1), we can choose $H \in \mathscr{H} $ such that 
\begin{equation} \label{eq:N-S}
\mu_{s_0}\big(N(W,H)x_0)>0\text{ and }\mu_{s_0}(\big(S(W,H)x_0\big) =0.
\end{equation}

By Theorem 2.2 from \cite{MozesShah95}, {\it every $W$-ergodic component of $\mu_{s_0}|_{(N(W,H)\backslash S(W,H))x_0}$ is the unique $gHg^{-1}$-invariant probability measure $g\mu_H$ on $gHx_0$ for some $g \in N(W,H)$ where $\mu_H$ is the $H$-invariant measure on $Hx_0$.}

\begin{prop}\label{prop: H not normal}
    $H$ is not a normal subgroup of $L$.
\end{prop}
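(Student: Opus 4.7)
The plan is to argue by contradiction: assume $H\triangleleft L$ and derive an impossibility.

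I would first extract information from normality alone. For every $g\in L$, $gHg^{-1}=H$, so the $L$-action on $p_H\in V_L$ factors through the continuous character $\chi_H(g)=\det(\mathrm{Ad}_L(g)|_{\mathrm{Lie}(H)}):L\to\R^*$. Since $G$ is a connected semisimple Lie group, any continuous character $G\to\R^*$ is trivial, so $\chi_H|_G\equiv 1$ and $Gp_H=\{p_H\}$. In particular $A\subseteq G$ fixes $p_H$, so $p_H\in V_L^{0-}(A)$ and $\Delta_H=G$. Normality also collapses the set $N(W,H)$: the condition $g^{-1}Wg\subseteq H$ is equivalent to $W\subseteq gHg^{-1}=H$, so $N(W,H)$ is either $L$ or empty. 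The hypothesis $\mu_{s_0}(N(W,H)x_0)>0$ forces the former, giving $W\subseteq H$ and $N(W,H)=L$.

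Next, I would upgrade the $W$-invariance of $\mu_{s_0}$ to $H$-invariance. By the Mozes--Shah theorem cited just above, every $W$-ergodic component of $\mu_{s_0}|_{(L\setminus S(W,H))x_0}$ is a $gHg^{-1}$-invariant probability measure on $gHx_0$. Normality gives $gHg^{-1}=H$ and $gHx_0=Hgx_0$, so each ergodic component is $H$-invariant, and since $\mu_{s_0}(S(W,H)x_0)=0$ we conclude that $\mu_{s_0}$ itself is $H$-invariant.

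Finally, the contradiction must come from the limit structure of $\mu_{s_0}$. Because $H\cap\Gamma$ is a lattice in the normal subgroup $H$, the product $H\Gamma$ is closed in $L$. If $H\Gamma=L$ then $H$ acts transitively on $L/\Gamma$ and any $H$-invariant probability measure coincides with $\mu_L$, directly contradicting $\mu_{s_0}\ne\mu_L$. If $H\Gamma\ne L$ then $Hx_0$ is a proper closed subset of $L/\Gamma$, and I would exploit the curve-limit construction together with the basic lemma: the pre-limit measures $\mu_{s_0,t_n}$ are supported on $\{a(t_n)u(R_{s_n}(\eta e^{-mt_n}))u(\varphi(s_n))x_0:\eta\in I\}\subseteq Gx_0$, and because $G$ normalises $H$ (from Step 1) and $Gp_H=\{p_H\}$ is a single point, Proposition~\ref{prop: basic} applied to $p_H\in V_L$ yields a uniform lower bound $\sup_{\eta\in I}\lVert a(t_n)u(R_{s_n}(\eta e^{-mt_n}))u(\varphi(s_n))\gamma p_H\rVert\ge D_2\lVert\gamma p_H\rVert$ for every $\gamma\in\Gamma$. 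Combined with the non-divergence in Theorem~\ref{thm:non-divergence}, this keeps the mass of $\mu_{s_0,t_n}$ bounded away from any fixed neighbourhood of $Hx_0$ uniformly in $n$, contradicting the $H$-invariance of the weak-$\ast$ limit $\mu_{s_0}$. The main obstacle is precisely this last step: converting the abstract $H$-invariance of the limit and the concrete curve-limit construction into quantitatively incompatible statements through the linearisation set up in Section~\ref{Section: Shrinking}, especially since the vector $p_H$ we are feeding into the basic lemma has a degenerate one-point $G$-orbit rather than the typical higher-dimensional closed orbit handled in Subsection~4.1.
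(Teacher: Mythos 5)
Your Steps~1--5 are sound and in fact go somewhat further than the paper: assuming $H\triangleleft L$, you correctly deduce that $G$ fixes $p_H$, that $N(W,H)=L$ and $W\subset H$, and via Mozes--Shah that every $W$-ergodic component of $\mu_{s_0}$ is $gHg^{-1}=H$-invariant, so $\mu_{s_0}$ is $H$-invariant. The case $H\Gamma=L$ then gives a clean contradiction $\mu_{s_0}=\mu_L$. The trouble is entirely in Step~6. First, $H$-invariance of $\mu_{s_0}$ does not localize its support near $Hx_0$: by the Mozes--Shah description the limit is a mixture $\int g\mu_H\,d\overline{\mu_{s_0}}(g)$ over many distinct $H$-orbits $gHx_0$, so there is no reason for $\mu_{s_0}$ to charge any fixed neighborhood of $Hx_0$, and nothing to contradict if the pre-limit measures avoid one. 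Second, the linearized quantity you feed into Proposition~\ref{prop: basic} is degenerate: since $a(t_n)$, $u(R_{s_n}(\cdot))$, $u(\varphi(s_n))$ all lie in $G$ and $G$ fixes $p_H$, the expression $a(t_n)u(R_{s_n}(\eta e^{-mt_n}))u(\varphi(s_n))\gamma p_H$ equals $\gamma p_H$ identically, so the ``lower bound'' is the tautology $\lVert\gamma p_H\rVert\ge D_2\lVert\gamma p_H\rVert$ and gives no geometric information about distance to $Hx_0$. So the $H\Gamma\neq L$ case is a genuine gap, not just a missing detail.

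The paper avoids this case split by aiming at a different contradiction: maximality of the unipotent group $W$ rather than the identity $\mu_{s_0}=\mu_L$. One passes to the quotient $\bar L=L/H$, $\bar X=\bar L/\bar\Gamma$; the pushforward measures $\overline{\mu_{s_0,t_n}}\to\overline{\mu_{s_0}}$ are parametric measures on translates of the curve in $\bar G=\prod_{i\in\mathcal{J}}G_i$, where $\mathcal{J}=\{i:G_i\not\subset H\}$. Running the argument of Proposition~\ref{prop: unip inv} in $\bar G$ (with $p_1$ playing the role of $k_1$) shows $\overline{\mu_{s_0}}$ is invariant under the image of $u=\big(u_{i_j}(r\varphi_{i_j}'(s_0))\big)_{1\le j\le p_1}$. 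The ergodic-decomposition formula $\mu_{s_0}=\int_{g\in\overline{\mathcal{F}}}g\mu_H\,d\overline{\mu_{s_0}}(g)$ then lifts that invariance to $u\mu_{s_0}=\mu_{s_0}$, because translating the fundamental domain $\overline{\mathcal{F}}$ by $u$ leaves the integral unchanged. Since $u$ lies in the factors $G_{i_j}$ with $G_{i_j}\cap H$ discrete, $u\notin H\supset W$ for $r\neq 0$, contradicting the maximality of $W$. Your $H$-invariance observation is compatible with this and could even be stated alongside it, but on its own it does not close the argument; the missing idea is precisely that an extra unipotent direction survives in $L/H$ and can be pulled back through the disintegration.
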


\begin{proof} Suppose that $H$ is a normal subgroup of $L$. 
Let $\bar L = L/H$, $q: L \to \bar L$ be the quotient homomorphism, $\bar \Gamma = q(\Gamma)$, $\bar G = q(G) = \prod_{i \, : \, G_i \not \subset H} G_i$. Since $H\Gamma$ is closed in $L$, $\bar X : = \bar L/\bar \Gamma$ is a finite volume homogeneous space. Let $\bar q: X \to \bar X$. For any $x \in X$, we denote $\bar x = \bar q(x)$.  Let $\mathcal{M}^1(X)$ and $\mathcal{M}^1(\bar X)$ be the spaces of Borel probability measures on $X$ and $\bar X$, respectively. Define $\bar q_{\ast}: \mathcal{M}^1(X) \to \mathcal{M}^1(\bar X)$ by $\bar q_{\ast}(\lambda)(B) = \lambda(\bar q^{-1}(B))$ for any $\lambda \in \mathcal{M}^1(X)$ and any measurable set $B$ in $\bar X$. Then $\bar q_{\ast}$ is continuous. For any $\lambda \in \mathcal{M}^1(X)$, let $\bar q_{\ast}(\lambda) = \bar \lambda$. 

Consider a sequence of $\eta$-parametric measures concentrated on $\{\overline{a(t_n)u(R(\eta e^{-mt_n})) u(\varphi(s_n))x_0} \}_n$ in $\mathcal{M}^1(\bar X)$. We note that this sequence of measures is equal to $\{\overline{ \mu_{s_0, t_n}}\}_n$. Since $\lim_{n \to \infty}\mu_{s_0, t_n} = \mu_{s_0}$ and $\bar q_{\ast}$ is continuous, we have $$ \lim _{n \to \infty} \overline{\mu_{s_0,t_n}} = \lim_{ n \to \infty} \bar q_{\ast}(\mu_{s_0,t_n}) = \bar q_{\ast}(\mu_{s_0}) = \overline{\mu_{s_0}}. $$ 

Let $\mathcal{J} = \{1 \leq i \leq k : G_i \not\subset H\} = \{ i_1, i_2, \cdots, i_p\}$ for some $p \in \N$. Let $p_1 $ be the smallest index $j \in \{1, 2, \cdots, p-1\}$ such that $\zeta_{i_j}>\zeta_{i_{j+1}}$. If no such index exists (i.e., if $\zeta_{i_1} = \zeta_{i_2} = \cdots = \zeta_{i_p}$), we set $p_1 = p$. By the same argument as in Proposition~\ref{prop: unip inv}, $\overline{\mu_{s_0}}$ is invariant under $\{\big(u_{i_j}(r\cdot\varphi_{i_j}'(s_0))\big)_{1 \leq j \leq p_1}: r \in \R\}$.
 
For any $g\in L$ and $x=g\Gamma$, $\bar q^{-1}(\bar x)=gH\Gamma$. In view of the observation after \eqref{eq:N-S}, a $W$-ergodic decomposition of $\mu_{s_0}$ can be expressed as:
\begin{equation}  \label{eq:ergodic-mus0}
    \mu_{s_0} = \int_{g \in \overline{\mathcal{F}}} g\mu_H \, d\,\overline{\mu_{s_0}}(g),
\end{equation} where $\overline{\mathcal{F}}\subset L$ represents any fundamental domain of $\bar X=L/H\Gamma$. 

We claim that $\mu_{s_0}$ is invariant under $\{\big(u_{i_j}(r\cdot\varphi_{i_j}'(s_0))\big)_{1 \leq j \leq p_1}: r \in \R\}$. 

To prove this claim, let $u\in \{\big(u_{i_j}(r\cdot\varphi_{i_j}'(s_0))\big)_{1 \leq j \leq p_1}: r \in \R\}$. Then, by \eqref{eq:ergodic-mus0},
\begin{align*}
    u\mu_{s_0} 
    &= \int_{g \in \overline{\mathcal{F}}} ug\mu_H \, d\,\overline{\mu_{s_0}}(g)\\
    &=\int_{ug \in \overline{u\mathcal{F}}} ug\mu_H \, d\,\overline{\mu_{s_0}}(g)
   \\
   &=\int_{g\in u\overline{\mathcal{F}}} g\mu_H\,d\,\overline{\mu_{s_0}}\\
   &=\mu_{s_0},
\end{align*}
because $\overline{\mu_{s_0}}$ is $u$-invariant, and $u\overline{\mathcal{F}}\subset L$ is a fundamental domain of $L/H\Gamma$. This proves the claim. 

However, since $\big(u_{i_j}(r \cdot \varphi_{i_j}'(s_0))\big)_{1 \leq j \leq p_1} \notin W$ for all $r \in \R \setminus \{0\}$, this is a contradiction to the maximality of $W$. 

Therefore, $H$ cannot be a normal subgroup of $L$. 
\end{proof}

Now, let $C$ be a compact subset of $N(W,H)\backslash S(W,H)$ such that $\mu_{s_0}(C\Gamma/\Gamma) := \varepsilon_0$ for some $\varepsilon_0>0$. 
Define 
\begin{equation}
    \mathcal{A} = \{v \in \wedge^{\text{dim} H}\mathcal{L}: v \wedge X = 0 \text{ in } V_L \text{ for all } X \in \text{Lie}(W)\}.
\end{equation} 
Then, 
\begin{equation}
    \{g \in L : g\cdot p_H \in\mathcal{A}\} =  \{g \in L : \text{Lie}(W) \subset \Ad(g)(\text{Lie}(H))\} = N(W,H). 
\end{equation}

Now we apply the linearlization technique. The following theorem is an adaptation of Theorem 4.1 from \cite{Sha96} for our purposes.

\begin{thm}[\cite{Sha96}, Theorem 4.1]\label{Theorem: 7. linearlization for equidistribution} 
    Let $\varepsilon>0$, $d \in \N$, and a compact set $\mathcal{C}$ in $(N(W,H)\backslash S(W,H)\big)\Gamma/\Gamma$ be given. Then, there exists a compact subset $\mathcal{D}$ of $\mathcal{A}$ such that for any open neighborhood $\Phi$ of $D$ in $V_L$, there exists an open neighborhood $\Psi$ of $\mathcal{C}$ in $L/\Gamma$ such that for any $p \in \mathcal{P}_d(G)$ and for any bounded open interval $J$, one of the folloing holds:
    \begin{enumerate}
        \item there exists $\gamma \in \Gamma$ such that $p(J)\gamma p_H \subset \Phi$.
        \item $(1/\nu(J))\cdot \nu( \{s \in J: \pi(p(s)) \in \Psi\}) < \varepsilon$.
    \end{enumerate}
\end{thm}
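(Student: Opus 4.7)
The plan is to follow the standard linearization strategy of Dani--Margulis and Shah. The first step is to choose the compact set $\mathcal{D}$. Since $\mathcal{C}$ is compact and contained in $(N(W,H)\setminus S(W,H))\Gamma/\Gamma$, we can lift $\mathcal{C}$ to a compact subset $\Omega \subset N(W,H)\setminus S(W,H)$ whose image in $L/\Gamma$ contains $\mathcal{C}$, and then set $\mathcal{D} = \Omega \cdot p_H \subset \mathcal{A}$. The key input justifying this choice is the Dani--Margulis theorem (\cite{DM93}, Theorem 3.4) asserting that $\Gamma p_H$ is discrete in $V_L$, together with the avoidance of $S(W,H)$: if two distinct $\Gamma$-translates of $\mathcal{A}$ approached one another near a point of $\mathcal{C}$, the element $g$ would lie in $N(W,F)$ for some $F \in \mathscr{H}$ properly contained in $H$, hence in $S(W,H)$, contradicting our choice of $\mathcal{C}$.

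Given any bounded open neighborhood $\Phi$ of $\mathcal{D}$ in $V_L$, we next construct $\Psi$. Shrinking $\Phi$ if necessary, the separation property from the previous step gives a neighborhood $\Psi$ of $\mathcal{C}$ in $L/\Gamma$ with the following local uniqueness property: for every $g \in L$ with $g\Gamma \in \Psi$, the set $\{\gamma \in \Gamma : g\gamma p_H \in \Phi\}$ lies in a single coset of the stabilizer of $p_H$ in $\Gamma$, and such a $g$ admits a representative near $\Omega$. This isolates a canonical $\gamma$ on each connected component where $p(s)\Gamma \in \Psi$.

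Now suppose conclusion (2) fails for a given $p \in \mathcal{P}_d(L)$ and open interval $J$, so $\mathcal{B} := \{s \in J : \pi(p(s)) \in \Psi\}$ has measure at least $\varepsilon \nu(J)$. For each $s \in \mathcal{B}$ choose $\gamma_s \in \Gamma$ with $p(s)\gamma_s p_H \in \Phi$. Continuity of $p$ and the local uniqueness property force $\gamma_s$ to be locally constant on $\mathcal{B}$, so after passing to a measurable subset of $\mathcal{B}$ of measure at least $\varepsilon' \nu(J)$ we may take a single $\gamma \in \Gamma$. The map $s \mapsto p(s)\gamma p_H$ is a $V_L$-valued polynomial of degree bounded in terms of $d$ and $\dim H$, hence $(C,\alpha)$-good in the sense of Kleinbock--Margulis. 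Applied to the coordinate functions defining $\Phi$, $(C,\alpha)$-goodness upgrades ``the polynomial lies in $\Phi$ on a set of proportion $\varepsilon'$'' to ``the polynomial lies in a slightly enlarged neighborhood of $\mathcal{D}$ on all of $J$.'' Absorbing this enlargement into the initial choice of $\Phi$ (by first choosing a smaller reference neighborhood $\Phi_0 \subset \Phi$, working with $\Phi_0$ above, and outputting $\Phi$) yields conclusion (1).

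The main obstacle is the separation step, namely making the link between avoidance of $S(W,H)$ and the local uniqueness of $\gamma_s$ fully uniform in the data $(d,\varepsilon,\mathcal{C})$; this is where the structure of $\mathscr{H}$ really enters, via the observation that a near-collision of two distinct $\Gamma$-translates of $\mathcal{A}$ near $\mathcal{D}$ exhibits a smaller group $F \in \mathscr{H}$ that would place the relevant point in $S(W,H)\Gamma/\Gamma$. Once the uniqueness of $\gamma_s$ in a definite neighborhood is secured, the $(C,\alpha)$-goodness of polynomial trajectories of bounded degree carries out the dichotomy in a routine manner.
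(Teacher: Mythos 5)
The paper does not prove this statement: it is imported verbatim (with minor notational adaptation) from Shah's 1996 paper, Theorem 4.1, and is used as a black box. So the comparison here is really against the standard Dani--Margulis--Shah linearization argument, not against an argument appearing in this paper.

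Your sketch has the right ingredients — lifting $\mathcal{C}$ to a compact subset of $N(W,H)\setminus S(W,H)$, discreteness of $\Gamma p_H$, the ``near-collision of two $\Gamma$-translates near $\mathcal{D}$ produces an element of $S(W,H)$'' separation mechanism, and a $(C,\alpha)$-goodness/polynomial-growth step. However, there is a genuine gap in the passage ``Continuity of $p$ and the local uniqueness property force $\gamma_s$ to be locally constant on $\mathcal{B}$, so after passing to a measurable subset of $\mathcal{B}$ of measure at least $\varepsilon'\nu(J)$ we may take a single $\gamma$.'' Local constancy of $\gamma_s$ on each connected component of $\mathcal{B}$ says nothing about the total measure attributable to any one $\gamma$: a priori $\mathcal{B}$ can have countably many components, each with its own $\gamma$ and each of arbitrarily small measure, so no single $\gamma$ need account for a definite proportion of $J$. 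This is precisely the point the standard proof must work to handle. In Shah's argument one does not extract a fixed $\gamma$ first; one instead forms, for each $\gamma$, the (open) set $J_\gamma=\{s\in J: p(s)\gamma p_H\in \Phi_1\}$ for a suitable intermediate neighborhood $\Phi_1$, applies the polynomial growth estimate on each connected component of each $J_\gamma$ that meets $\mathcal{B}$ to show the relative measure of $\{p(\cdot)\gamma p_H\in\Phi_0\}$ in that component is $<\varepsilon$ unless the component is all of $J$, and uses the separation property to ensure these contributions can be summed without overcounting. Only then does the dichotomy (either some $J_\gamma$ engulfs $J$, giving alternative (1), or the total measure of $\mathcal{B}$ is $<\varepsilon\nu(J)$, giving alternative (2)) follow. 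A secondary imprecision: $\Phi$ is an arbitrary open neighborhood of the compact set $\mathcal{D}$, not a sublevel set of coordinate functionals, so ``apply $(C,\alpha)$-goodness to the coordinate functions defining $\Phi$'' needs to be replaced by applying it to the norm of $p(s)\gamma p_H$ and to the components of $p(s)\gamma p_H$ transverse to the linear subspace $\mathcal{A}$, together with a metric neighborhood of $\mathcal{D}$ inside $\Phi$.
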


\medskip

Now, since a sequence of polynomials $\{ \eta \mapsto a(t_n) u(R( e^{-mt_n} \eta))u(\varphi(s_n))\}_n$ has a bounded degree, we can let $d$ be the maximum degree of the sequence.  
For $\frac{\varepsilon_0}{2}$, $d$ and $C\Gamma/\Gamma$, there exists a compact subset $\mathcal{D}$ of $\mathcal{A}$ given in Theorem \ref{Theorem: 7. linearlization for equidistribution}.  Let $\Phi_1$ be a relatively compact open neighberhood of $\mathcal{D}$ in $V_L$ and $\Psi_1$ be the corresponding neighborhood of $C\Gamma/\Gamma$ in $L/\Gamma$.
Let 
\begin{equation}
    I(\Psi_1,n) = \{\eta \in I: a(t_n)u(R( e^{-mt_n}\eta))u(\varphi(s_n))\Gamma/\Gamma \in \Psi_1\}.
\end{equation}
Then,
\begin{equation}\label{equation: 7. liminf of measure of open set}
    \varepsilon_0 = \mu_{s_0}(C\Gamma/\Gamma) \leq \mu_{s_0}(\Psi_1) \leq \liminf_n\mu_{s_0,t_n}(\Psi_1) = \liminf_n  \nu\big(I(\Psi_1, n)\big)
\end{equation} where $\nu$ is the Lebesgue measure on $\R$.

If condition (2) of Theorem \ref{Theorem: 7. linearlization for equidistribution} holds for $a(t_n) u(R( e^{-mt_n} \eta))u(\varphi(s_n))$  for infinitely many $n$, i.e., if 
\begin{equation*}
    \nu(\{ \eta \in I : a(t_n) u(R( e^{-mt_n} \eta))u(\varphi(s_n))\Gamma/\Gamma \in \Psi_1 \}<\frac{\epsilon_0}{2}
\end{equation*} for infinitely many $n$, then this contradicts to (\ref{equation: 7. liminf of measure of open set}). Therefore, this implies that for all but finitely many $n$, there exists $\gamma_n \in \Gamma$ such that \begin{equation*}
    a(t_n)u(R(\ e^{-mt_n}I))u(\varphi(s_n))\gamma_np_H \subset \Phi_1.
\end{equation*}
Since $\Phi_1$ is relatively compact, we have 
\begin{equation*}
    \sup_{\eta \in I}\norm{a(t_n)u(R(e^{-mt_n}\eta))u(\varphi(s_n))\gamma_np_H}<R 
\end{equation*}
for some $R>0$. 
Combined with Proposition~\ref{prop: basic}, since $\Gamma p_H$ is discrete, by passing to a subsequence, there exists $\gamma \in \Gamma$ such that $\gamma p_H = \gamma_n p_H$ for all $n$. Now we have 
\begin{equation*}
    \sup_{\eta \in I}\norm{a(t_n)u(R(e^{-mt_n}\eta))u(\varphi(s_n))\gamma p_H}<R.
\end{equation*}
Since $\gamma H\gamma^{-1} \in \mathcal{H}$, we can replace $\gamma p_H = p_{\gamma H \gamma^{-1}}$ with $p_H$. 
Letting $\eta = 0$, we have 
\begin{equation}\label{equation: 4. u(phi(sn)) bounded}
    \norm{a(t_n)u(\varphi(s_n))p_H}<R. 
\end{equation} 

Now we claim that for any sequence $\{w_n\}_n$ in a finite dimensional representation $V$ of $G$ such that $\lim_{n \to \infty}w_n = w$ for some $w \in V$, if 
$ \norm{a(t_n)w_n}<R$ for some $R>0$ and for all $n$, then $w \in V^{0-}(A).$ 

Let $V^+(A) = \{ v \in V : \lim_{t \to \infty} a(-t)v = 0\}$. Let $\text{Pr}^+: V \to V^+(A)$ be the projection onto $V^+(A)$. For the sake of contradiction, suppose that $\norm{\text{Pr}^+(w)} = c$ for some $c>0$. Then $\norm{\text{Pr}^+(w_n)} > \frac{c}{2}$ for large enough $n$. This implies that if $n$ is large enough, we have
\begin{equation*}
    \begin{split}
    \norm{a(t_n)w_n} & \geq \norm{a(t_n)\text{Pr}^+(w_n)}\\
    & \geq e^{C_1t_n}\norm{\text{Pr}^+(w_n)}\\
    & > e^{C_1 t_n}\cdot \frac{c}{2}
    \end{split}         
\end{equation*} for the smallest positive eigenvalue $C_1>0$ of $V$ with respect to $H_C$. This contradicts $\norm{a(t_n)w_n}<R$ for all $n$. Therefore, $\text{Pr}^+(w) = 0$.

By this claim, (\ref{equation: 4. u(phi(sn)) bounded}) implies that
\begin{equation}\label{equation: upH is in V0-}
    u(\varphi(s_0))p_H \in V_L^{0-}(A).
\end{equation}

For this $p_H$, we further claim that $G$ does not fix $p_H$. If $Gp_H = p_H$, it would imply $G \subset N^1_L(H)$ where $N_L^1(H) = \{g \in N_L(H): \det(\Ad\, g|_{\text{Lie}(H)}) = 1  \}$.  Note that since $\Gamma p_H$ is closed, it follows that $\Gamma N^1_L(H) = \{g \in L: gp_H \in \Gamma p_H\}$ is closed. This implies that $N^1_L(H)\Gamma = (\Gamma N_L^1(H))^{-1}$ is closed as well. Consider $L = \overline{G\Gamma} \subset N^1_L(H)\Gamma$, which would imply $L = N^1_L(H)$, i.e., $H \triangleleft L$. This contradicts Proposition~\ref{prop: H not normal}.

Therefore, \eqref{equation: upH is in V0-} implies $s_0 \in E_1\cup E_2$ where $E_{1}   = \bigcup_{\{H \in \mathscr{H} \,: \,Gp_H \text{ is closed in } V_L, \, Gp_H \neq p_H\}} I_{H}$ and $E_{2} = \bigcup_{\{H \in \mathscr{H}\, : \, G p_H \text{ is not closed in } V_L\}} I_{H}$. This contradicts the choice of $s_0$. Therefore, we conclude that $\mu_{s_0}$ is the unique $L$-invariant measure on $L/\Gamma$. 

\subsection{proof of Theorem \ref{Thm : 1. shrinking equidistribuiton at point}}
This proof follows the proof of Theorem 1.3
from \cite{ShahPYang24}.
Since we have shown that for almost every $s_0 \in I$ we have $\mu_{s_0} = \mu_L$, it remains to prove that for any sequence $t_n \to \infty$ and any $f \in C_c(X)$,
\begin{equation}
    \lim_{t_n \to \infty} \int ^1_0f\big(a(t_n)u(\varphi(s_n + e^{-mt_n}\eta)) x_0\big)d\eta = \int_X f(y) d\mu_{s_0}(y)
\end{equation} holds.
Fix $f \in C_c(X)$ and $\varepsilon>0$. Then, there exists $\delta>0$ such that for any $y$ and $z \in X$, if $y \stackrel{\delta}{\approx} z $, then $f(y) \stackrel{\varepsilon}{\approx}f(z)$. 
Observe that 
\begin{equation*}
\begin{split}
    a(t_n)u(\varphi(s_n + e^{-mt_n} \eta)) 
    & = a(t_n)u(\varphi(s_n + e^{-mt_n}\eta)-\varphi(s_n))u(\varphi(s_n))\\
    & = a(t_n)u(R(e^{-mt_n}\eta) + O(e^{-mlt_n}))u(\varphi(s_n))\\
    & = a(t_n)u(O(e^{-mlt_n}))a(t_n)^{-1}a(t_n)u(R(e^{-mt_n}\eta))u(\varphi(s_n))\\
    & = u(O(e^{(\zeta_1-ml)t_n})a(t_n)u(R(e^{-mt_n} \eta))u(\varphi(s_n)).
\end{split}
\end{equation*}

Since $\zeta_1 - ml < \zeta_1 - \frac{\zeta_k}{2}\cdot \frac{2\zeta_1}{\zeta_k} = 0$, this implies that, for large enough $n$, 
\begin{equation*}
    a(t_n)u(\varphi(s_n + e^{-mt_n} \eta))x_0 \stackrel{\delta}{\approx}a(t_n)u(R(e^{-mt_n}\eta))u(\varphi(s_n))x_0.
\end{equation*}
Hence, for large enough $n$,
\begin{equation*}
    \int^1_0 f(a(t_n)u(\varphi(s_n +e^{-mt_n}\eta)x_0)d\eta \stackrel{\varepsilon}{\approx} \int_0^1f(a(t_n)u(R(e^{-mt_n}\eta))u(\varphi(s_n))x_0)d\eta.
\end{equation*}

Since $\varepsilon$ was arbitrary, 
\begin{equation*}
    \lim_{n \to \infty}\int^1_0 f(a(t_n)u(\varphi(s_n +e^{-mt_n}\eta)x_0)d\eta = \lim_{n \to \infty} \int_0^1f(a(t_n)u(R(e^{-mt_n}\eta))u(\varphi(s_n))x_0)d\eta = \int_Xf\,d\mu_{s_0}.
\end{equation*}

\subsection{Proof of Theorem \ref{Theorem : 1. main theorem}}
This proof follows the proof of Theorem 1.3
from \cite{ShahPYang24}.
Note that $E$ is a Lebesgue null set. By equation (\ref{3. equation: measure nondivergent}) and Theorem \ref{Thm : 1. shrinking equidistribuiton at point}, we can derive that for any bounded continuous function $f \in C_b(X)$, the equation (\ref{equation: 1. equidist on shrinking curve}) still holds. It is enough to show that for any $f \in C_b(X)$ such that $\norm{f}_{\infty} \leq 1$ and $\int_Xf \, d\mu_L =0$, and for any compact set $K$ in $I\backslash E$, 
\begin{equation}\label{equation: 5. equidist on cpt set}
    \lim_{t \to \infty} \frac{1}{\nu(K)}\int_K f\big(a(t)u(\varphi(s))x_0\big)ds = 0.
\end{equation}

Suppose that (\ref{equation: 5. equidist on cpt set}) fails to hold for some $f$ and $K$. 
Then, there exists $\varepsilon>0$ and a sequence $\{t_n\}$ in $\R$ with $\lim_{ n \to \infty} t_n = \infty$ such that 
\begin{equation*}
    \Big|\int_K f(a(t_n) u( \varphi(s))x_0)ds\Big| > \nu(K)\varepsilon
\end{equation*} holds for all $n$.

For each large $n$, we can choose finitely many disjoint intervals of the form $(s, s+e^{-mt})$ such that each interval has nonempty intersection with $K$ and the symmetric difference between their union and $K$ has the Lebesgue measure less than $\nu(K)\varepsilon/2$.

Then, for each large enough $n$, since $\norm{f}_{\infty}< 1$, there exists  a sequence $s_n$ such that $K \cap (s_n, s_n + e^{-mt_n}) \neq \emptyset$ and we have 
\begin{equation*}
    \Big|\int_{s_n}^{s_n + e^{-mt_n}} f(a(t_n)u(\varphi(s))x_0)ds\Big| >\frac{\varepsilon e^{-mt_n}}{2}.
\end{equation*}
This can be written as
\begin{equation*}
    \Big| \int ^1_0 a(t_n)u(\varphi(s_n + e^{-mt_n}\eta))x_0)d\eta\Big|> \frac{\varepsilon}{2}.
\end{equation*}
Note that by passing to a subsequence, $s_n \to s_0$ as $n \to \infty$ for some $s_0 \in K$. By Theorem \ref{Thm : 1. shrinking equidistribuiton at point}, this is a contradiction to our assumption $\int_X fd\mu_L = 0$. This completes the proof. 



\bibliographystyle{alpha}
\bibliography{references.bib}

\end{document}